\providecommand{\U}[1]{\protect\rule{.1in}{.1in}}
\providecommand{\U}[1]{\protect\rule{.1in}{.1in}}
\providecommand{\U}[1]{\protect\rule{.1in}{.1in}}
\providecommand{\U}[1]{\protect\rule{.1in}{.1in}}
\providecommand{\U}[1]{\protect\rule{.1in}{.1in}}
\providecommand{\U}[1]{\protect\rule{.1in}{.1in}}
\providecommand{\U}[1]{\protect\rule{.1in}{.1in}}
\providecommand{\U}[1]{\protect\rule{.1in}{.1in}}
\providecommand{\U}[1]{\protect\rule{.1in}{.1in}}
\providecommand{\U}[1]{\protect\rule{.1in}{.1in}}
\providecommand{\U}[1]{\protect\rule{.1in}{.1in}}
\providecommand{\U}[1]{\protect\rule{.1in}{.1in}}
\providecommand{\U}[1]{\protect\rule{.1in}{.1in}}
\newtheorem{theorem}{Theorem}
\newtheorem{condition}[theorem]{Condition}
\newtheorem{corollary}[theorem]{Corollary}
\newtheorem{definition}[theorem]{Definition}
\newtheorem{example}[theorem]{Example}
\newtheorem{lemma}[theorem]{Lemma}
\newtheorem{problem}[theorem]{Problem}
\newtheorem{proposition}[theorem]{Proposition}
\newtheorem{remark}[theorem]{Remark}
\begin{document}
\date{\today}
\title[Square-Summable]{Regular Sampling on Metabelian Nilpotent Lie Groups}
\author[V. Oussa]{Vignon S. Oussa}
\address{Dept.\ of Mathematics \& Computer Science\\
Bridgewater State University\\
Bridgewater, MA 02325 U.S.A.\\
 }
\email{vignon.oussa@bridgew.edu}
\keywords{uniform discrete groups}
\subjclass[2000]{22E25, 22E27}

\begin{abstract}
Let $N$ be a simply connected, connected non-commutative nilpotent Lie group
with Lie algebra $\mathfrak{n}$ having rational structure constants. We assume
that $N=P\rtimes M,$ $M$ is commutative, and for all $\lambda\in
\mathfrak{n}^{\ast}$ in general position the subalgebra $\mathfrak{p}=\log(P)$
is a polarization ideal subordinated to $\lambda$ ($\mathfrak{p}$ is a maximal
ideal satisfying $[\mathfrak{p},\mathfrak{p}]\subseteq\ker\lambda$ for all
$\lambda$ in general position and $\mathfrak{p}$ is necessarily commutative.)
Under these assumptions, we prove that there exists a discrete uniform
subgroup $\Gamma\subset N$ such that $L^{2}(N)$ admits band-limited spaces
with respect to the group Fourier transform which are sampling spaces with
respect to $\Gamma.$ We also provide explicit sufficient conditions which are
easily checked for the existence of sampling spaces. Sufficient conditions for
sampling spaces which enjoy the interpolation property are also given. Our
result bears a striking resemblance with the well-known
Whittaker-Kotel'nikov-Shannon sampling theorem.

\end{abstract}
\maketitle




\vskip0.5cm


\vskip0.5cm

\section{Introduction}

It is a well-established fact that a band-limited function on the real line
with its Fourier transform vanishing outside of an interval $\left[
-\frac{\Omega}{2},\frac{\Omega}{2}\right]  $ can be reconstructed by the
Whittaker-Kotel'nikov-Shannon sampling series from its values at points in the
lattice $\frac{1}{\Omega}$ $\mathbb{Z}$ (see \cite{Stens}). This series
expansion takes the form
\[
f\left(  t\right)  =\sum_{k\in\mathbb{Z}}f\left(  \frac{k}{\Omega}\right)
\frac{\sin\left(  \pi\Omega\left(  t-\frac{k}{\Omega}\right)  \right)  }%
{\pi\Omega\left(  t-\frac{k}{\Omega}\right)  }%
\]
with convergence in $L^{2}\left(  \mathbb{R}\right)  $ as well as convergence
in $L^{\infty}\left(  \mathbb{R}\right)  $. A relatively novel problem in
harmonic analysis has been to find analogues of Whittaker-Kotel'nikov-Shannon
sampling series for non-commutative groups. Since $\mathbb{R}$ is a
commutative nilpotent Lie group, it is natural to investigate if it is
possible to extend Whittaker-Kotel'nikov-Shannon's theorem to nilpotent Lie
groups which are not commutative.

Let $G$ be a locally compact group and $\Gamma$ a discrete subset of $G.$ Let
$\mathbf{H}$ be a left-invariant closed subspace of $L^{2}\left(  G\right)  $
consisting of continuous functions. We say that $\mathbf{H}$ is a
\textbf{sampling space} with respect to the set $\Gamma$ \cite{Fuhr cont} if
the following conditions are satisfied. Firstly, the restriction map $f\mapsto
f|_{\Gamma}$ defines a constant multiple of an isometry of $\mathbf{H}$ into
the Hilbert space of square-summable sequences defined over $\Gamma.$ In other
words, there exists a positive constant $c_{\mathbf{H}}$ such that%

\begin{equation}
\sum_{\gamma\in\Gamma}\left\vert f\left(  \gamma\right)  \right\vert
^{2}=c_{\mathbf{H}}\left\Vert f\right\Vert _{2}^{2} \label{sum}%
\end{equation}
for all $f$ in $\mathbf{H}$. Secondly, there exists a vector $s$ in
$\mathbf{H}$ such that an arbitrary element $f$ in the given Hilbert space has
the expansion
\begin{equation}
f\left(  x\right)  =\sum_{\gamma\in\Gamma}f\left(  \gamma\right)  s\left(
\gamma^{-1}x\right)  \label{expansion}%
\end{equation}
with convergence in the norm of $L^{2}\left(  G\right)  .$ If $\Gamma$ is a
discrete subgroup of $G$, we say that $\mathbf{H}$ is a \textbf{regular
sampling space} with respect to $\Gamma.$ Also, if $\mathbf{H}$ is a sampling
space with respect to $\Gamma$ and if the restriction mapping $f\mapsto
f|_{\Gamma}\in l^{2}\left(  \Gamma\right)  $ is surjective then we say that
$\mathbf{H}$ has the \textbf{interpolation property} with respect to $\Gamma.$
This notion of sampling space is taken from \cite{Fuhr cont} and is analogous
to Whittaker-Kotel'nikov-Shannon's theorem. In \cite{FG}, the authors used a
less restrictive definition. They defined a sampling space to be a
left-invariant closed subspace of $L^{2}\left(  G\right)  $ consisting of
continuous functions with the additional requirement that the restriction map
$f\mapsto f|_{\Gamma}$ is a topological embedding of $\mathbf{H}$ into
$l^{2}\left(  \Gamma\right)  $ in the sense that there exist positive real
numbers $a\leq b$ such that
\[
a\left\Vert f\right\Vert _{2}^{2}\leq\sum_{\gamma\in\Gamma}\left\vert f\left(
\gamma\right)  \right\vert ^{2}\leq b\left\Vert f\right\Vert _{2}^{2}
\]
for all $f\in\mathbf{H}.$ The positive number $b/a$ is called the tightness of
the sampling set. Notice that in (\ref{sum}) the tightness of the sampling is
required to be equal to one. Using oscillation estimates, the authors in
\cite{FG} provide general but precise results on the existence of sampling
spaces on locally compact groups. The band-limited vectors in \cite{FG} are
functions that belong to the range of a spectral projection of a self-adjoint
positive definite operator on $L^{2}(G)$ called the sub-Laplacian. This notion
of band-limitation is essentially due to Pesenson \cite{Pes} and does not rely
on the group Fourier transform.

We shall employ in this work a different concept of band-limitation which in
our opinion is consistent with the classical one
(Whittaker-Kotel'nikov-Shannon band-limitation), and the main objective of the
present work is to prove that under reasonable assumptions (see Condition
\ref{cond}) Whittaker-Kotel'nikov-Shannon Theorem naturally extends to a large
class of non-commutative nilpotent Lie groups of arbitrary step.

Let $G$ be a simply connected, connected nilpotent Lie group with Lie algebra
$\mathfrak{g}.$ A subspace $\mathbf{H}$ of $L^{2}(G)$ is said to be a
\textbf{band-limited space} with respect to the group Fourier transform if
there exists a bounded subset $E$ of the unitary dual of the group $G$ such
that $E$ has positive Plancherel measure, and $\mathbf{H}$ consists of vectors
whose group Fourier transforms are supported on the bounded set $E.$ In this
work, we address the following.

\begin{problem}
\label{prob}Let $N$ be a simply connected and connected nilpotent Lie group
with Lie algebra $\mathfrak{n}$ with rational structure constants. Are there
conditions on the Lie algebra $\mathfrak{n}$ under which there exists a
uniform discrete subgroup $\Gamma\subset N=\exp\mathfrak{n}$ such that
$L^{2}\left(  N\right)  $ admits a band-limited (in terms of the group Fourier
transform) sampling subspace with respect to $\Gamma?$
\end{problem}

Firstly, we observe that if $N=\mathbb{R}^{d}$ then $\Gamma$ can be taken to
be an integer lattice, and the Hilbert space of functions vanishing outside
the cube $\left[  -\frac{1}{2},\frac{1}{2}\right]  ^{d}$ is a sampling space
which enjoys the interpolation property with respect to $\mathbb{Z}^{d}$.
Secondly, let $N$ be the Heisenberg Lie group and let $\Gamma$ be a discrete
uniform subgroup of $N$ which we realize as follows:
\[
N=\left\{  \left[
\begin{array}
[c]{ccc}%
1 & x & z\\
0 & 1 & y\\
0 & 0 & 1
\end{array}
\right]  :x,y,z\in\mathbb{R}\right\}  \text{ and }\Gamma=\left\{  \left[
\begin{array}
[c]{ccc}%
1 & m & k\\
0 & 1 & l\\
0 & 0 & 1
\end{array}
\right]  :k,l,m\in\mathbb{Z}\right\}  .
\]
It is shown in \cite{Fuhr cont, Currey} that there exist subspaces of
$L^{2}\left(  N\right)  $ which are sampling subspaces with respect to
$\Gamma$. We have also established in \cite{oussa,oussa1,oussa2} the existence
of sampling spaces defined over a class of simply connected, connected
nilpotent Lie groups which satisfy the following conditions: $N$ is a step-two
nilpotent Lie group with Lie algebra $\mathfrak{n}$ of dimension $n$ such that
$\mathfrak{n=a\oplus b\oplus c}$ where $\left[  \mathfrak{a},\mathfrak{b}%
\right]  \subseteq\mathfrak{c,}$ $\mathfrak{a},\mathfrak{b}$ are commutative
Lie algebras, $\mathfrak{a}=\mathbb{R}\text{-span}\left\{  X_{1},X_{2}%
,\cdots,X_{d}\right\}  ,\mathfrak{b}=\mathbb{R}\text{-span}\left\{
Y_{1},Y_{2},\cdots,Y_{d}\right\}  ,$ $\mathfrak{c}=\mathbb{R}\text{-span}%
\left\{  Z_{1},Z_{2},\cdots,Z_{n-2d}\right\}  $ ($d\geq1,n>2d$) and
\begin{equation}
\left(  Z_{1},\cdots,Z_{n-2d}\right)  \mapsto\det\left[
\begin{array}
[c]{ccc}%
\left[  X_{1},Y_{1}\right]  & \cdots & \left[  X_{1},Y_{d}\right] \\
\vdots & \ddots & \vdots\\
\left[  X_{d},Y_{1}\right]  & \cdots & \left[  X_{d},Y_{d}\right]
\end{array}
\right]  \label{matrix}%
\end{equation}
is a non-vanishing polynomial in the variables $Z_{1},\cdots,Z_{n-2d}.$ To the
best of our knowledge, prior to this work, regular sampling for band-limited
(in terms of the group Fourier transform) left-invariant spaces defined over
nilpotent Lie groups has only been systematically studied on step one (the
classical Euclidean case) and some step two nilpotent Lie groups \cite{Currey,
Fuhr cont, oussa, oussa2, oussa1}. We shall prove that for any given natural
number $k,$ there exists a nilpotent Lie group of step $k$ which admits
band-limited sampling spaces in terms of the Plancherel transform with respect
to a discrete uniform subgroup (see Example \ref{dimfour}).

\subsection{Overview of the Paper}

Let us start by fixing notation and by recalling some relevant concepts.

\begin{itemize}
\item Let $Q$ be a linear operator acting on an $n$-dimensional real vector
space $V.$ The norm of the matrix $Q$ induced by the max-norm of the vector
space $V$ is given by
\[
\left\Vert Q\right\Vert _{\infty}=\sup\left\{  \left\Vert Qx\right\Vert
_{\max}:x\in V\text{ and }\left\Vert x\right\Vert _{\max}=1\right\}
\]
and the max-norm of an arbitrary vector is given by
\[
\left\Vert x\right\Vert _{\max}=\max\left\{  \left\vert x_{k}\right\vert
:1\leq k\leq n\right\}  .
\]
Next, letting $\left[  Q\right]  $ be the matrix representation of $Q$ with
respect to a fixed basis, the transpose of this matrix is denoted $\left[
Q\right]  ^{T}.$

\item Given a countable sequence $(f_{i})_{i\in I}$ of vectors in a Hilbert
space $\mathbf{H},$ we say that$(f_{i})_{i\in I}$ forms a \textbf{frame }
\cite{Pete, Han Yang Wang, Pfander} if and only if there exist strictly
positive real numbers $a,b$ such that for any vector $f\in\mathbf{H}$,%
\[
a\left\Vert f\right\Vert ^{2}\leq\sum_{i\in I}\left\vert \left\langle
f,f_{i}\right\rangle \right\vert ^{2}\leq b\left\Vert f\right\Vert ^{2}.
\]
In the case where $a=b$, the sequence $(f_{i})_{i\in I}$ is called a
\textbf{tight frame}. If $a=b=1$, $(f_{i})_{i\in I}$ is called a
\textbf{Parseval frame}.

\item Let $\pi$ be a unitary representation of a locally compact group $G$
acting on a Hilbert space $\mathbf{H}_{\pi}.$ We say that the representation
$\pi$ is \textbf{admissible} \cite{Fuhr cont} if there exists a vector $h$ in
$\mathbf{H}_{\pi}$ such that the linear map
\begin{equation}
g\mapsto V_{h}^{\pi}\left(  f\right)  =\left\langle f,\pi\left(  g\right)
h\right\rangle \label{admissibility}%
\end{equation}
defines an isometry of the Hilbert space $\mathbf{H}_{\pi}$ into $L^{2}\left(
G\right)  .$ In this case, the vector $h$ is called an \textbf{admissible
vector} for the representation $\pi.$

\item Let $\left(  A,\mathcal{M}\right)  $ be a measurable space. A family
$\left(  \mathbf{H}_{a}\right)  _{a\in A}$ of Hilbert spaces indexed by the
set $A$ is called a field of Hilbert spaces over $A$ \cite{Folland}. An
element $f$ of $\Pi_{a\in A}\mathbf{H}_{a}$ is a vector-valued function
$a\mapsto f\left(  a\right)  \in$ $\mathbf{H}_{a}$ defined on the set $A.$
Such a map is called a \textbf{vector field} on $A.$ A \textbf{measurable
field }of Hilbert spaces defined on a measurable set $A$ is a field of Hilbert
spaces together with a countable set $\left(  e_{j}\right)  _{j\in J}$ of
vector fields such that the functions $a\mapsto\left\langle e_{j}\left(
a\right)  ,e_{k}\left(  a\right)  \right\rangle _{\mathbf{H}_{a}}$ are
measurable for all $j,k\in J,$ and the linear span of $\left\{  e_{j}\left(
a\right)  \right\}  _{j\in J}$ is dense in $\mathbf{H}_{a}$ for each $a.$ A
vector field $f$ is called a \textbf{measurable vector field} if
$a\mapsto\left\langle f\left(  a\right)  ,e_{j}\left(  a\right)  \right\rangle
_{\mathbf{H}_{a}}$ is a measurable function for each index $j.$

\item Let $\mathfrak{n}$ be a nilpotent Lie algebra of dimension $n$, and let
$\mathfrak{n}^{\ast}$ be the dual vector space of $\mathfrak{n.}$ A
\textbf{polarizing subalgebra }$\mathfrak{p}\left(  \lambda\right)  $
subordinated to a linear functional $\lambda\in\mathfrak{n}^{\ast}$ (see
\cite{Corwin, vignon}) is a maximal algebra satisfying
\[
\left[  \mathfrak{\mathfrak{p}}\left(  \lambda\right)  \mathfrak{,\mathfrak{p}%
}\left(  \lambda\right)  \right]  =\text{\textrm{Span-}}\left\{  \left[
X,Y\right]  \in\mathfrak{n}:X,Y\in\mathfrak{\mathfrak{p}}\left(
\lambda\right)  \right\}  \subseteq\ker\left(  \lambda\right)  .
\]

\item The coadjoint action on the dual of $\mathfrak{n}$ is the dual of the
adjoint action of $N=\exp\mathfrak{n}$ on $\mathfrak{n}$. In other words, for
$X\in\mathfrak{n},$ and a linear functional $\lambda\in\mathfrak{n}^{\ast}$,
the coadjoint action is defined as follows:
\begin{equation}
\left(  \exp X\cdot\lambda\right)  \left(  Y\right)  =\left\langle \left(
e^{ad-X}\right)  ^{\ast}\lambda,Y\right\rangle =\left[  \left(  e^{ad-X}%
\right)  ^{\ast}\lambda\right]  \left(  Y\right)  . \label{coadjoint}%
\end{equation}

\end{itemize}

The following is a concept which is central to our results.

\begin{definition}
Let $\mathfrak{p}$ be a subalgebra of $\mathfrak{n}.$ We say that
$\mathfrak{p}$ is a \textbf{constant polarization subalgebra} of
$\mathfrak{n}$ if there exists a Zariski open set $\Omega\subset
\mathfrak{n}^{\ast}$ which is invariant under the coadjoint action of $N$ and
$\mathfrak{p}$ is a polarization subalgebra subordinated to every linear
functional in $\Omega.$
\end{definition}

In other words, $\mathfrak{p}$ is a constant polarization subalgebra of
$\mathfrak{n}$ if $\mathfrak{p}$ is a polarization algebra for all linear
functionals in general position, and it can then be shown (see Proposition
\ref{commutative}) that $\mathfrak{p}$ is necessarily commutative.

\subsubsection{Summary of Main Results}

Let us suppose that $N=P\rtimes M=\exp\left(  \mathfrak{p}\right)  \rtimes
\exp\left(  \mathfrak{m}\right)  $ is a simply connected, connected
non-commutative nilpotent Lie group with Lie algebra $\mathfrak{n=p\oplus m}$
such that

\begin{condition}
\label{cond} \ \ \ \ \ \ \ \ \ \ \ \ 

\begin{enumerate}
\item $\mathfrak{p}$ is a constant polarization ideal of $\mathfrak{n}$ (thus
commutative) $\mathfrak{m}$ is commutative as well, $p=\dim\mathfrak{p}\text{,
}m=\dim\mathfrak{m}$ and $p-m>0.$

\item There exists a strong Malcev basis $\left\{  Z_{1},\cdots,Z_{p}%
,A_{1},\cdots,A_{m}\right\}  $ for $\mathfrak{n}$ such that $\left\{
Z_{1},\cdots,Z_{p}\right\}  $ is a basis for $\mathfrak{p}$ and $\left\{
A_{1},\cdots,A_{m}\right\}  $ is a basis for $\mathfrak{m}$ and
\[
\Gamma=\exp\left(  \mathbb{Z}Z_{1}+\cdots+\mathbb{Z}Z_{p}\right)  \exp\left(
\mathbb{Z}A_{1}+\cdots+\mathbb{Z}A_{m}\right)
\]
is a discrete uniform subgroup of $N.$ This is equivalent to the fact that
$\mathfrak{n}$ has rational structure constants (see Chapter $5,$
\cite{Corwin}).
\end{enumerate}
\end{condition}


In order to properly introduce the concept of band-limitation with respect to
the group Fourier transform, we appeal to Kirillov's theory \cite{Corwin}
which states that the unitary irreducible representations of $N$ are
parametrized by orbits of the coadjoint action of $N$ on the dual of its Lie
algebra and can be modeled as acting in $L^{2}(\mathbb{R}^{m}).$ Let $\Sigma$
be a parameterizing set for the unitary dual of $N.$ In other words, $\Sigma$
is a cross-section for the coadjoint orbits in an $N$-invariant Zariski open
set $\Omega\subset\mathfrak{n}^{\ast}.$ If the ideal $\mathfrak{p}$ is a
constant polarization for $\mathfrak{n}$ then the orbits in general position
are $2m$-dimensional submanifolds of $\mathfrak{n}^{\ast}$ and we shall (this
is a slight abuse of notation) regard $\Sigma$ as a Zariski open subset of
$\mathbb{R}^{p-m}=\mathbb{R}^{n-2m}$. Next, let $L$ be the left regular
representation of $N$ acting on $L^{2}\left(  N\right)  $ by left
translations. Let
\[
\mathcal{P}:L^{2}\left(  N\right)  \xrightarrow{\hspace*{1cm}}L^{2}\left(
\Sigma\mathbf{,}\text{ }L^{2}\left(  \mathbb{R}^{m}\right)  \otimes
L^{2}\left(  \mathbb{R}^{m}\right)  ,d\mu\left(  \lambda\right)  \right)
\]
be the Plancherel transform which defines a unitary map on $L^{2}\left(
N\right)  $ (see Subsection \ref{Plancherel}). The Plancherel transform
intertwines the left regular representation with a direct integral of
irreducible representations of $N.$ The measure used in the decomposition is
the so-called Plancherel measure: $d\mu;$ which is a weighted Lebesgue measure
on $\Sigma.$ More precisely $d\mu\left(  \lambda\right)  $ is equal to
$\left\vert \mathbf{P}\left(  \lambda\right)  \right\vert d\lambda$ where
$\mathbf{P}\left(  \lambda\right)  $ is a polynomial defined over $\Sigma$ and
$d\lambda$ is the Lebesgue measure on $\Sigma$ (see Lemma
\ref{Plancherel measure}.) Given a $\mu$-measurable bounded set $\mathbf{A}%
\subset\Sigma$, and a measurable field of unit vectors $\left(  \mathbf{u}%
\left(  \lambda\right)  \right)  _{\lambda\in\mathbf{A}}$ in $L^{2}\left(
\mathbb{R}^{m}\right)  ,$ the Hilbert space $\mathbf{H}_{\mathbf{A}}$ which
consists of vectors $f\in L^{2}(N)$ such that
\[
\mathcal{P}f\left(  \lambda\right)  =\left\{
\begin{array}
[c]{c}%
\mathbf{v}\left(  \lambda\right)  \otimes\mathbf{u}\left(  \lambda\right)
\text{ if }\lambda\in\mathbf{A}\\
0\otimes0\text{ if }\lambda\notin\mathbf{A}%
\end{array}
\right.
\]
and $\left(  \mathbf{v}\left(  \lambda\right)  \otimes\mathbf{u}\left(
\lambda\right)  \right)  _{\lambda\in\mathbf{A}}$ is a measurable field of
rank-one operators is a left-invariant multiplicity-free band-limited subspace
of $L^{2}\left(  N\right)  $ which we identify with $L^{2}(\mathbf{A}%
\times\mathbb{R}^{m}).$ Conjugating the operators $L(x)$ by the Plancherel
transform, we obtain that
\[
\lbrack\mathcal{P}\circ L(x)\circ\mathcal{P}^{-1}](\mathbf{v}_{\lambda}%
\otimes\mathbf{u}_{\lambda})_{\lambda\in\Sigma}=([\sigma_{\lambda
}(x)\mathbf{v}_{\lambda}]\otimes\mathbf{u}_{\lambda})_{\lambda\in\Sigma}%
\equiv\sigma_{\lambda}(x)\mathbf{v}(\lambda,\cdot)
\]
where $\sigma_{\lambda}$ is the unitary irreducible representation
corresponding to the linear functional $\lambda\in\Sigma.$ Let $L_{\mathbf{H}%
_{\mathbf{A}}}$ be the representation induced by the action of the left
regular representation on the Hilbert space $\mathbf{H}_{\mathbf{A}}.$ It can
be shown that if the spectral set $\mathbf{A}$ satisfies precise conditions
specified in Theorem \ref{Main 2} then the restriction of $L_{\mathbf{H}%
_{\mathbf{A}}}$ to the discrete group $\Gamma$ is unitarily equivalent with a
subrepresentation of the left regular representation of $\Gamma$ acting on
$l^{2}\left(  \Gamma\right)  .$ The existence of band-limited sampling spaces
with respect to $\Gamma$ can then be established by directly appealing to
known results contained in the Monograph \cite{Fuhr cont}. Define
$\beta:\Sigma\times\mathbb{R}^{m}\rightarrow\mathbb{R}^{p}$ such that
\[
\beta\left(  \lambda,t\right)  =\exp\left(  t_{1}A_{1}+\cdots+t_{m}%
A_{m}\right)  \cdot\lambda|\mathfrak{p}^{\ast}
\]
where $t=\left(  t_{1},\cdots,t_{m}\right)  $. Under the assumptions listed in
Condition \ref{cond}, it is worth noting that $\beta$ is a diffeomorphism
(Lemma \ref{beta 1}) and the following holds true.

\begin{theorem}
\label{Main 2}Let $N=PM=\exp\left(  \mathfrak{p}\right)  \exp\left(
\mathfrak{m}\right)  $ be a simply connected, connected nilpotent Lie group
with Lie algebra $\mathfrak{n}$ satisfying Condition \ref{cond}. Let
$\mathbf{A}$ be a $\mu$-measurable bounded subset of $\Sigma.$

\begin{enumerate}
\item If $\beta\left(  \mathbf{A}\times\left[  0,1\right)  ^{m}\right)  $ has
positive Lebesgue measure in $\mathbb{R}^{p}$ and is contained in a
fundamental domain of $\mathbb{Z}^{p}$ then there exists a vector $\eta
\in\mathbf{H}_{\mathbf{A}}$ such that $V_{\eta}^{L}\left(  \mathbf{H}%
_{\mathbf{A}}\right)  $ is a left-invariant subspace of $L^{2}\left(
N\right)  $ which is a sampling space with respect to $\Gamma.$

\item If $\beta\left(  \mathbf{A}\times\left[  0,1\right)  ^{m}\right)  $ is
equal to a fundamental domain of $\mathbb{Z}^{p}$ then there exists a vector
$\eta\in\mathbf{H}_{\mathbf{A}}$ such that $V_{\eta}^{L}\left(  \mathbf{H}%
_{\mathbf{A}}\right)  $ is a left-invariant subspace of $L^{2}\left(
N\right)  $ which is a sampling space with the interpolation property with
respect to $\Gamma.$
\end{enumerate}
\end{theorem}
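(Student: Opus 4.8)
The plan is to reduce the sampling problem to a concrete statement about systems of exponentials and then invoke the abstract sampling machinery of \cite{Fuhr cont}. The key structural fact is that, because $\mathfrak{p}$ is a constant polarization ideal, each irreducible representation $\sigma_\lambda$ can be realized on $L^2(\mathbb{R}^m)$ via induction from the character of $P=\exp(\mathfrak{p})$ determined by $\lambda$, and on the Hilbert space $\mathbf{H}_{\mathbf{A}}$ the left regular representation acts on the first tensor factor as $(\sigma_\lambda(x)\mathbf{v}_\lambda)\otimes\mathbf{u}_\lambda$. I would first restrict the representation $L_{\mathbf{H}_{\mathbf{A}}}$ to $\Gamma=\exp(\mathbb{Z}Z_1+\cdots+\mathbb{Z}Z_p)\exp(\mathbb{Z}A_1+\cdots+\mathbb{Z}A_m)$ and compute explicitly how the generators act. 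The central observation is that the map $\beta$ is precisely the change of variables that diagonalizes the action of the normal abelian subgroup $P$: for $\exp(k_1Z_1+\cdots+k_pZ_p)\in\Gamma\cap P$, the operator $\sigma_\lambda$ acts by the character $t\mapsto e^{2\pi i\langle k,\beta(\lambda,t)\rangle}$ after transporting $L^2(\mathbb{R}^m)$ to $L^2$ of the image under $\beta$.

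The heart of the argument is to show that this restricted representation is unitarily equivalent to a subrepresentation of the right (or left) regular representation of $\Gamma$ on $\ell^2(\Gamma)$. To do this I would use the hypothesis of Condition \ref{cond} to write $\Gamma$ as a semidirect product of the two integer lattices, identify $\ell^2(\Gamma)$ with $L^2$ of the dual torus $\widehat{\mathbb{Z}^p}\times\widehat{\mathbb{Z}^m}$ through the Fourier transform on the abelian group, and then compare this with the direct integral model of $\mathbf{H}_{\mathbf{A}}$. The condition that $\beta(\mathbf{A}\times[0,1)^m)$ be contained in a fundamental domain of $\mathbb{Z}^p$ guarantees that the exponentials $\{e^{2\pi i\langle k,\cdot\rangle}:k\in\mathbb{Z}^p\}$ restricted to this set form a \emph{Bessel/tight} system (a piece of an orthonormal basis of $L^2$ of the full torus), so the restriction map $f\mapsto f|_\Gamma$ becomes, after these identifications, the inclusion of an orthogonal subspace into $\ell^2(\Gamma)$ composed with an isometry. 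When $\beta(\mathbf{A}\times[0,1)^m)$ equals the full fundamental domain, the exponential system is a complete orthonormal basis, which forces the restriction map to be onto and hence yields the interpolation property.

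Concretely, after fixing the cyclic/measurable field of unit vectors $(\mathbf{u}(\lambda))_{\lambda\in\mathbf{A}}$ that defines $\mathbf{H}_{\mathbf{A}}$, I would choose the sampling generator $\eta$ so that its Plancherel transform $\mathcal{P}\eta(\lambda)=\mathbf{w}(\lambda)\otimes\mathbf{u}(\lambda)$ with $\mathbf{w}(\lambda)$ the vector dual to the exponential structure, and then verify that $V_\eta^L$ is an isometry (up to the constant $c_{\mathbf{H}}$) by a direct Plancherel computation transported through $\beta$. Once $V_\eta^L(\mathbf{H}_{\mathbf{A}})$ is known to be a left-invariant closed subspace on which the restriction to $\Gamma$ is a constant multiple of an isometry, both the sampling expansion \eqref{expansion} and, in the second case, surjectivity onto $\ell^2(\Gamma)$ follow from the abstract results of \cite{Fuhr cont} on coherent-state sampling for admissible representations. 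The main obstacle I anticipate is purely bookkeeping: one must check carefully that the Plancherel measure $|\mathbf{P}(\lambda)|\,d\lambda$ on $\Sigma$ is exactly absorbed by the Jacobian of $\beta$ so that the change of variables turns the weighted $L^2(\mathbf{A}\times\mathbb{R}^m)$ into the unweighted $L^2(\beta(\mathbf{A}\times[0,1)^m))$ needed for the exponentials to be orthonormal; this compatibility between the Plancherel weight and $\Jac(\beta)$ is where the precise geometry of the constant polarization enters and is the step most likely to require delicate justification.
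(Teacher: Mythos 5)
Your proposal follows essentially the same route as the paper: the paper defines an intertwining operator $J_{\mathbf{A}}$ built from the characters $e^{2\pi i\langle \beta(\lambda,t),Z(k)\rangle}$ (your ``$\beta$ diagonalizes the $P$-action''), shows it is an isometry (resp.\ unitary) onto a subspace of $\ell^{2}(\Gamma)$ precisely because the exponentials restricted to $\beta(\mathbf{A}\times[0,1)^{m})$ form a Parseval frame (resp.\ orthonormal basis) when that set sits inside (resp.\ equals) a fundamental domain of $\mathbb{Z}^{p}$, establishes the Plancherel-weight/Jacobian cancellation $d\mu(\lambda)=|\det J_{\beta}(\lambda,0)|\,d\lambda$ as a separate lemma exactly as you anticipate, and then produces $\eta$ by pulling back the projection of the delta sequence and invoking F\"uhr's abstract results. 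The ingredients, their roles, and the point you flag as delicate all match the paper's argument.
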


Let $s=\left(  s_{1},s_{2},\cdots,s_{m}\right)  $ be an element of
$\mathbb{R}^{m}$ and define $A\left(  s\right)  $ to be the restriction of the
linear map $ad\left(  -\sum_{j=1}^{m}s_{j}A_{j}\right)  $ to the ideal
$\mathfrak{p}\subset\mathfrak{n.}$ Let $\left[  A\left(  s\right)  \right]  $
be the matrix representation of the linear map $A\left(  s\right)  $ with
respect to the basis $\left\{  Z_{1},\cdots,Z_{p}\right\}  $. Let $e^{[A(s)]}$
be the matrix obtained by exponentiating $\left[  A\left(  s\right)  \right]
.$ Since $s\mapsto\left\Vert e^{\left[  A\left(  s\right)  \right]  ^{T}%
}\right\Vert _{\infty}$ is a continuous function of $s$ it is bounded over any
compact set and in particular over the cube $[0,1]^{m}.$ As such, letting
$\varepsilon$ be a positive real number satisfying
\begin{equation}
\varepsilon\leq\delta=\frac{1}{2}\left(  \sup\left\{  \left\Vert e^{\left[
A\left(  s\right)  \right]  ^{T}}\right\Vert _{\infty}:s\in\left[  0,1\right)
^{m}\right\}  \right)  ^{-1}<\infty, \label{above}%
\end{equation}
we shall prove that under the assumptions provided in Condition \ref{cond},
the set
\[
B(\varepsilon)=\beta\left(  \left(  -\varepsilon,\varepsilon\right)
^{n-2m}\times\left[  0,1\right)  ^{m}\right)
\]
has positive Lebesgue measure and is contained in a fundamental domain of
$\mathbb{Z}^{p}.$ Appealing to Theorem \ref{Main 2}, we are then able to
establish the following result which provides us with a concrete formula for
the bandwidth of various sampling spaces.

\begin{corollary}
\label{Main1}Let $N=PM=\exp\left(  \mathfrak{p}\right)  \exp\left(
\mathfrak{m}\right)  $ be a simply connected, connected nilpotent Lie group
with Lie algebra $\mathfrak{n}$ satisfying Condition \ref{cond}. For any
positive number $\varepsilon$ satisfying (\ref{above}) there exists a
band-limited vector $\eta=\eta_{\varepsilon}\ $in the Hilbert space
$\mathbf{H}_{\left(  -\varepsilon,\varepsilon\right)  ^{n-2m}}$ such that
$V_{\eta}^{L}\left(  \mathbf{H}_{\left(  -\varepsilon,\varepsilon\right)
^{n-2m}}\right)  $ is a left-invariant subspace of $L^{2}\left(  N\right)  $
which is a sampling space with respect to $\Gamma$.
\end{corollary}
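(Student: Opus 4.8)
The plan is to deduce the corollary from part (1) of Theorem \ref{Main 2} applied to the spectral set $\mathbf{A}=\left( -\varepsilon ,\varepsilon \right) ^{n-2m}$, so that $\mathbf{H}_{\mathbf{A}}=\mathbf{H}_{\left( -\varepsilon ,\varepsilon \right) ^{n-2m}}$. Everything then reduces to checking, for this choice of $\mathbf{A}$, that the set $B(\varepsilon )=\beta \left( \mathbf{A}\times \left[ 0,1\right) ^{m}\right) $ has positive Lebesgue measure in $\mathbb{R}^{p}$ and is contained in a fundamental domain of $\mathbb{Z}^{p}$. The positivity of the measure is immediate: $\beta $ is a diffeomorphism by Lemma \ref{beta 1}, and the box $\left( -\varepsilon ,\varepsilon \right) ^{n-2m}\times \left[ 0,1\right) ^{m}$ is a subset of $\mathbb{R}^{n-m}=\mathbb{R}^{p}$ of positive measure, so its image under a map with nowhere-vanishing Jacobian again has positive measure.

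The containment in a fundamental domain is the heart of the matter, and I would first extract the linear structure hidden in $\beta $. Setting $X=t_{1}A_{1}+\cdots +t_{m}A_{m}$, the defining formula for $\beta $ together with the coadjoint action (\ref{coadjoint}) gives, for every $Y\in \mathfrak{p}$,
\[
\beta \left( \lambda ,t\right) \left( Y\right) =\lambda \left( e^{ad\left( -X\right) }Y\right) =\lambda \left( e^{A\left( t\right) }Y\right) ,
\]
the second equality being valid because $\mathfrak{p}$ is an ideal, so that $ad\left( -X\right) $ restricts to $A\left( t\right) $ on $\mathfrak{p}$ and $e^{A\left( t\right) }$ preserves $\mathfrak{p}$. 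Reading this in the coordinates supplied by the basis $\left\{ Z_{1},\cdots ,Z_{p}\right\} $ of $\mathfrak{p}$ turns it into the matrix identity
\[
\beta \left( \lambda ,t\right) =e^{\left[ A\left( t\right) \right] ^{T}}\,\beta \left( \lambda ,0\right) ,\qquad \beta \left( \lambda ,0\right) =\lambda |\mathfrak{p}^{\ast },
\]
so that the entire $t$-dependence of $\beta $ is carried by the linear operator $e^{\left[ A\left( t\right) \right] ^{T}}$ acting on the cross-section point $\beta \left( \lambda ,0\right) \in \mathbb{R}^{p}$.

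With this factorization the containment follows from one max-norm estimate. Because $\Sigma $ is identified with $\mathbb{R}^{n-2m}$ through the very coordinates that make up $\beta \left( \lambda ,0\right) =\lambda |\mathfrak{p}^{\ast }$, a point $\lambda \in \left( -\varepsilon ,\varepsilon \right) ^{n-2m}$ satisfies $\left\Vert \beta \left( \lambda ,0\right) \right\Vert _{\max }<\varepsilon $; and for $t\in \left[ 0,1\right) ^{m}$ the induced norm obeys $\left\Vert e^{\left[ A\left( t\right) \right] ^{T}}\right\Vert _{\infty }\leq \sup \{\left\Vert e^{\left[ A\left( s\right) \right] ^{T}}\right\Vert _{\infty }:s\in \left[ 0,1\right) ^{m}\}=\left( 2\delta \right) ^{-1}$ by the definition of $\delta $ in (\ref{above}). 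Combining these with submultiplicativity of the induced norm and the hypothesis $\varepsilon \leq \delta $ gives
\[
\left\Vert \beta \left( \lambda ,t\right) \right\Vert _{\max }\leq \left\Vert e^{\left[ A\left( t\right) \right] ^{T}}\right\Vert _{\infty }\left\Vert \beta \left( \lambda ,0\right) \right\Vert _{\max }<\left( 2\delta \right) ^{-1}\delta =\frac{1}{2}.
\]
Hence $B(\varepsilon )\subset \left( -\frac{1}{2},\frac{1}{2}\right) ^{p}$, which lies inside the fundamental domain $\left[ -\frac{1}{2},\frac{1}{2}\right) ^{p}$ of $\mathbb{Z}^{p}$. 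Both hypotheses of Theorem \ref{Main 2}(1) are thus verified, and that theorem yields the required vector $\eta =\eta _{\varepsilon }\in \mathbf{H}_{\left( -\varepsilon ,\varepsilon \right) ^{n-2m}}$ together with the sampling space $V_{\eta }^{L}\left( \mathbf{H}_{\left( -\varepsilon ,\varepsilon \right) ^{n-2m}}\right) $ with respect to $\Gamma $.

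The step I expect to demand the most care is the factorization, specifically the transition to matrix coordinates: one must confirm that $e^{ad\left( -X\right) }$ really does preserve the ideal $\mathfrak{p}$, that its restriction to $\mathfrak{p}$ is exactly $e^{A\left( t\right) }$, and that dualizing the coadjoint action produces the transpose $e^{\left[ A\left( t\right) \right] ^{T}}$ rather than $e^{\left[ A\left( t\right) \right] }$ --- this sign and transpose bookkeeping is what matches the normalization chosen in (\ref{above}). Once the identity $\beta \left( \lambda ,t\right) =e^{\left[ A\left( t\right) \right] ^{T}}\beta \left( \lambda ,0\right) $ is in place, the factor $\frac{1}{2}$ in the definition of $\delta $ has been engineered precisely so that the product of the two bounds stays strictly below $\frac{1}{2}$, and the remaining verification is routine.
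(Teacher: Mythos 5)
Your proposal is correct and follows essentially the same route as the paper: the paper's Lemma \ref{betta 2} establishes exactly your key estimate, namely that $\beta(\lambda,t)=\mathfrak{P}(A(t))\,\lambda|\mathfrak{p}^{\ast}$ with $\mathfrak{P}(A(t))=e^{[A(t)]^{T}}$, so that $\Vert\beta(\lambda,t)\Vert_{\max}<(2\delta)^{-1}\varepsilon\leq\frac{1}{2}$ places $B(\varepsilon)$ inside $\left(-\frac{1}{2},\frac{1}{2}\right)^{p}$, after which Theorem \ref{Main 2}(1) is invoked. Your sign/transpose bookkeeping and the positivity-of-measure remark match the paper's argument.
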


Next, we exhibit several examples to illustrate that the class of groups under
consideration is fairly large.

\begin{example}
\label{dimfour}\text{ }

\begin{enumerate}
\item Let $N$ be a simply connected, connected nilpotent Lie group with Lie
algebra $\mathfrak{n}$ of dimension four or less. Then, there exists a uniform
discrete subgroup $\Gamma\subset N$ such that $L^{2}\left(  N\right)  $ admits
a band-limited sampling subspace with respect to $\Gamma.$ Additionally, the
Heisenberg Lie group admits a sampling space which has the interpolation
property with respect to a uniform discrete subgroup.

\item Let $N$ be a simply connected, connected nilpotent Lie group with Lie
algebra spanned by $Z_{1},Z_{2},\cdots,Z_{p},A_{1},$ the vector space
generated by $Z_{1},Z_{2},\cdots,Z_{p}$ is a commutative ideal, $\left.
\left[  adA_{1}\right]  \right\vert _{\mathfrak{p}}=A$ is a nonzero rational
upper triangular nilpotent matrix of order $p$, and $e^{A}\mathbb{Z}%
^{p}\subseteq\mathbb{Z}^{p}.$ Then $L^{2}\left(  N\right)  $ admits a
band-limited sampling subspace with respect to the discrete uniform subgroup
\[
\exp\left(  \mathbb{Z}Z_{1}+\cdots+\mathbb{Z}Z_{p}\right)  \exp\left(
\mathbb{Z}A_{1}\right)  .
\]

\item Let $N$ be a simply connected, connected nilpotent Lie group with Lie
algebra spanned by $Z_{1},Z_{2},\cdots,Z_{p},A_{1},\cdots,A_{m}$ where
$p=m+1,$ the vector space generated by $Z_{1},Z_{2},\cdots,Z_{p}$ is a
commutative ideal, the vector space generated by $A_{1},\cdots A_{m}$ is
commutative and the matrix representation of $ad\left(  \sum_{k=1}^{m}%
t_{k}A_{k}\right)  $ restricted to $\mathfrak{p}$ is given by
\[
A\left(  t\right)  =\left.  \left[  ad\sum_{k=1}^{m}t_{k}A_{k}\right]
\right\vert \mathfrak{p}=m!\left[
\begin{array}
[c]{cccccc}%
0 & t_{1} & t_{2} & \cdots & t_{m-1} & t_{m}\\
& 0 & t_{1} & t_{2} & \ddots & t_{m-1}\\
&  & 0 & t_{1} & \ddots & \vdots\\
&  &  & 0 & \ddots & t_{2}\\
&  &  &  & \ddots & t_{1}\\
&  &  &  &  & 0
\end{array}
\right]  .
\]
Then $L^{2}\left(  N\right)  $ admits a band-limited sampling subspace with
respect to the discrete uniform subgroup $\exp\left(  \mathbb{Z}Z_{1}%
+\cdots+\mathbb{Z}Z_{p}\right)  \exp\left(  \mathbb{Z}A_{1}+\cdots
+\mathbb{Z}A_{m}\right)  .$
\end{enumerate}
\end{example}

The work is organized as follows. In Section \ref{general}, we present general
well-known results of harmonic analysis on nilpotent Lie groups. Section
\ref{intermediateresult} contains intermediate results leading to the proofs
of Theorem \ref{Main 2}, Corollary \ref{Main1} and Example \ref{dimfour} which
are given in Section \ref{mainresults}. Finally, in Section
\ref{Othersamplingset} we provide a method for constructing other sampling
sets from $\Gamma$ by using automorphisms of the Lie group $N.$

\section{Harmonic Analysis on Nilpotent Lie Groups}

\label{general}

\subsection{Parametrization of Coadjoint Orbits}

Let $\mathfrak{n}$ be a finite-dimensional nilpotent Lie algebra of dimension
$n$. We say that $\mathfrak{n}$ has a rational structure \cite{Corwin} if
there is a real basis $\left\{  Z_{1},\cdots,Z_{n}\right\}  $ for the Lie
algebra $\mathfrak{n}$ having rational structure constants and the rational
span of the basis $\left\{  Z_{1},\cdots,Z_{n}\right\}  $ denoted by
$\mathfrak{n}_{\mathbb{Q}}$ provides a rational structure such that
$\mathfrak{n}$ is isomorphic to the vector space $\mathfrak{n}_{\mathbb{Q}%
}\otimes\mathbb{R}$. Let $\mathfrak{B}=\left\{  Z_{1},\cdots,Z_{n}\right\}  $
be a basis for the Lie algebra $\mathfrak{n}$ such that for any $Z_{i}%
,Z_{j}\in\mathfrak{B},$ we have:%
\[
\left[  Z_{i},Z_{j}\right]  =\sum_{k=1}^{n}c_{ijk}Z_{k}%
\]
and $c_{ijk}\in\mathbb{Q}.$ We say that $\mathfrak{B}$ is a \textbf{strong
Malcev basis }(see Page $10,$ \cite{Corwin}\textbf{)} if and only if for each
$1\leq j\leq n$ the real span of $\left\{  Z_{1},Z_{2},\cdots,Z_{j}\right\}  $
is an ideal of $\mathfrak{n.}$ Now, let $N$ be a connected, simply connected
nilpotent Lie group with Lie algebra $\mathfrak{n}$ having a rational
structure. The following result is taken from Corollary $5.1.10$,
\cite{Corwin}. Let $\left\{  Z_{1},\cdots,Z_{n}\right\}  $ be a strong Malcev
basis for the Lie algebra $\mathfrak{n.}$ There exists a suitable integer $q$
such that $\Gamma_{q}=\exp\left(  q\mathbb{Z}Z_{1}\right)  \cdots\exp\left(
q\mathbb{Z}Z_{n}\right)  $ is a discrete uniform subgroup of $N$ (there is a
compact set $K\subset G$ such that $\Gamma K=N$). Setting $X_{k}=qZ_{k}$ for
$1\leq k\leq n$, from now on, we fix $\left\{  X_{1},\cdots,X_{n}\right\}  $
as a strong Malcev basis for the Lie algebra $\mathfrak{n}$ such that
\[
\Gamma=\exp\left(  \mathbb{Z}X_{1}\right)  \cdots\exp\left(  \mathbb{Z}%
X_{n}\right)
\]
is a discrete uniform subgroup of $N$.

We shall next discuss the Plancherel theory for $N.$ This theory is well
exposed in \cite{Corwin} for nilpotent Lie groups. Let $\mathfrak{s}$ be a
subset of $\mathfrak{n}=\log(N).$ For each linear functional $\lambda
\in\mathfrak{n}^{\ast},$ we define the corresponding set
\[
\mathfrak{s}\left(  \lambda\right)  =\left\{  Z\in\mathfrak{n}:\text{ }%
\lambda\left(  \left[  Z,X\right]  \right)  =0\text{ for every }%
X\in\mathfrak{s}\right\}  .
\]
Next, we consider a fixed strong Malcev basis $\mathcal{B}^{\prime}=\left\{
X_{1},\cdots,X_{n}\right\}  $ $\ $and we construct a sequence of ideals
$\mathfrak{n}_{1}\subseteq\mathfrak{n}_{2}\subseteq\cdots\subseteq
\mathfrak{n}_{n-1}\subseteq\mathfrak{n}$ where each ideal $\mathfrak{n}_{k}$
is spanned by $\left\{  X_{1},\cdots,X_{k}\right\}  .$ It is easy to see that
the differential of the coadjoint action on $\lambda$ at the identity is given
by the matrix
\[
\left[  \left\langle \lambda,\left[  X_{j},X_{k}\right]  \right\rangle
\right]  _{1\leq j,k\leq n}=\left[  \lambda\left(  \left[  X_{j},X_{k}\right]
\right)  \right]  _{1\leq j,k\leq n}.
\]
Defining the skew-symmetric matrix-valued function
\begin{equation}
\lambda\mapsto\mathbf{M}\left(  \lambda\right)  =\left[
\begin{array}
[c]{ccc}%
\lambda\left[  X_{1},X_{1}\right]  & \cdots & \lambda\left[  X_{1}%
,X_{n}\right] \\
\vdots & \ddots & \vdots\\
\lambda\left[  X_{n},X_{1}\right]  & \cdots & \lambda\left[  X_{n}%
,X_{n}\right]
\end{array}
\right]  \label{Mlambda}%
\end{equation}
on $\mathfrak{n}^{\ast},$ it is worth noting that $\mathfrak{n}\left(
\lambda\right)  $ is equal to the null-space of $\mathbf{M}\left(
\lambda\right)  $, if $\mathbf{M}\left(  \lambda\right)  $ is regarded as a
linear operator acting on $\mathfrak{n}$ \cite{vignon}. According to the orbit
method \cite{Corwin}, the unitary dual of $N$ is in one-to-one correspondence
with the set of coadjoint orbits in the dual of the Lie algebra. For each
$\lambda\in\mathfrak{n}^{\ast}$ we define
\begin{equation}
\mathbf{e}\left(  \lambda\right)  =\left\{  1\leq k\leq n:\mathfrak{n}%
_{k}\text{ }\nsubseteq\text{ }\mathfrak{n}_{k-1}+\mathfrak{n}\left(
\lambda\right)  \right\}  . \label{jumping}%
\end{equation}
The set $\mathbf{e}\left(  \lambda\right)  $ collects all basis elements
$\left\{  X_{i}:i\in\mathbf{e}\left(  \lambda\right)  \right\}  \subset
\left\{  X_{1},X_{2},\cdots,X_{n-1},X_{n}\right\}  $ such that if the elements
are ordered such that $\mathbf{e}\left(  \lambda\right)  =\left\{
\mathbf{e}_{1}\left(  \lambda\right)  <\cdots<\mathbf{e}_{2m}\left(
\lambda\right)  \right\}  $ then the dimension of the manifold $\exp\left(
\mathbb{R}X_{\mathbf{e}_{1}\left(  \lambda\right)  }\right)  \cdots\exp\left(
\mathbb{R}X_{\mathbf{e}_{2m}\left(  \lambda\right)  }\right)  \cdot\lambda$ is
equal to the dimension of the $N$-orbit of $\lambda$. Each element of the set
$\mathbf{e}\left(  \lambda\right)  $ is called a \textbf{jump index} and
clearly the cardinality of the set of jump indices $\mathbf{e}\left(
\lambda\right)  $ must be equal to the dimension of the coadjoint orbit of
$\lambda.$

\vskip0.2cm\noindent For each subset $\mathbf{e}^{\circ}$ $\subseteq\left\{
1,2,\cdots,n\right\}  ,$ the set
\[
\Omega_{\mathbf{e}^{\circ}}=\left\{  \lambda\in\mathfrak{n}^{\ast}%
:\mathbf{e}\left(  \lambda\right)  =\mathbf{e}^{\circ}\right\}
\]
is algebraic and $N$-invariant \cite{Currey Can}. Moreover, there exists a set
of jump indices $\mathbf{e}$ such that $\Omega_{\mathbf{e}}=\Omega$ is a
Zariski open set in $\mathfrak{n}^{\ast}$ which is invariant under the action
of $N$ (Theorem $3.1.6,$ \cite{Corwin}.)

\vskip 0.2cm\noindent Put $\Omega=\Omega_{\mathbf{e}}.$ We recall that a
polarization subalgebra subordinated to the linear functional $\lambda$ is a
maximal subalgebra $\mathfrak{p}(\lambda)$ of $\mathfrak{n}^{\ast}$ satisfying
the condition $[\mathfrak{p}(\lambda),\mathfrak{p}(\lambda)]\subseteq
\ker\lambda.$ Notice that if $\mathfrak{p}(\lambda)$ is a polarization
subalgebra associated with the linear functional $\lambda$ then $\chi(\exp
X)=e^{2\pi i\lambda(X)}$ defines a character on $\exp(\mathfrak{p}(\lambda)$.
It is also well-known that $\dim\left(  {\mathfrak{n}}(\lambda)\right)  =n-2m$
and $\dim\left(  \mathfrak{n}/\mathfrak{p}(\lambda)\right)  =m,$ and
$\mathfrak{p}(\lambda)=\sum_{k=1}^{n}\mathfrak{n}_{k}\left(  \lambda
|\mathfrak{n}_{k}\right)  $ (see Page $30,$ \cite{Corwin} and \cite{vignon}.)

\begin{proposition}
\label{commutative} If $\mathfrak{p}$ is a constant polarization for
$\mathfrak{n}$ then it must be commutative.
\end{proposition}

\begin{proof}
Let $\Omega$ be a Zariski open and $N$-invariant subset of $\mathfrak{n}%
^{\ast}$ such that $\mathfrak{p}$ is an ideal subordinated to every linear
functional $\lambda\in\Omega.$ First, observe that $\Omega\cap[\mathfrak{p}%
,\mathfrak{p}]^{\ast}$ is open in $[\mathfrak{p},\mathfrak{p}]^{\ast}.$ Next,
for arbitrary $\ell\in\Omega\cap[\mathfrak{p},\mathfrak{p}]^{\ast},$ by
assumption $[\mathfrak{p},\mathfrak{p}]$ is contained in the kernel of $\ell.$
Thus, $[\mathfrak{p},\mathfrak{p}]$ must be a trivial vector space and it
follows that $\mathfrak{p}$ is commutative.
\end{proof}

The following result is established in Theorem $3.1.9,$ \cite{Corwin}

\begin{proposition}
A cross-section for the coadjoint orbits in $\Omega$ is
\begin{equation}
\Sigma=\left\{  \lambda\in\Omega:\lambda\left(  Z_{k}\right)  =0\text{ for all
}k\in\mathbf{e}\text{ }\right\}  \label{Sigma}%
\end{equation}

\end{proposition}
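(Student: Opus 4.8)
The plan is to show directly that $\Sigma$ is a transversal meeting each coadjoint orbit contained in $\Omega$ in exactly one point, which is the content of Theorem $3.1.9$ in \cite{Corwin}. Since $\Omega=\Omega_{\mathbf{e}}$ consists of functionals sharing the single generic jump set $\mathbf{e}$, the set $\Sigma$ is cut out of $\Omega$ by the $2m$ linear equations $\lambda(X_k)=0$ for $k\in\mathbf{e}$ (the conditions $\lambda(Z_k)=0$ and $\lambda(X_k)=0$ coincide, as $X_k$ is a nonzero scalar multiple of $Z_k$), so it has the codimension one would expect of a cross-section. The first step is to record the structural input coming from \eqref{jumping}: by definition $k\in\mathbf{e}$ exactly when $X_k\notin\mathfrak{n}_{k-1}+\mathfrak{n}(\lambda)$, so $\{X_k:k\in\mathbf{e}\}$ is a complement to the radical $\mathfrak{n}(\lambda)=\ker\mathbf{M}(\lambda)$ in $\mathfrak{n}$. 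Hence every orbit in $\Omega$ has dimension $|\mathbf{e}|=2m$, and the restriction of the Kirillov form $B_\lambda(X,Y)=\lambda([X,Y])$ to $\operatorname{span}\{X_k:k\in\mathbf{e}\}$ is nondegenerate, being the restriction of a skew form to a complement of its radical.

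I would then reduce the whole statement to a single assertion: for $\lambda\in\Omega$ the jump-coordinate map
\[
\Phi_\lambda:\mathcal{O}_\lambda\longrightarrow\mathbb{R}^{2m},\qquad \Phi_\lambda(\ell)=\bigl(\ell(X_k)\bigr)_{k\in\mathbf{e}},
\]
restricted to the coadjoint orbit $\mathcal{O}_\lambda=N\cdot\lambda$, is a bijection. Granting this, $\mathcal{O}_\lambda\cap\Sigma=\Phi_\lambda^{-1}(0)$ is a single point which, because $\Omega$ is $N$-invariant and contains $\mathcal{O}_\lambda$, automatically lies in $\Omega$; existence and uniqueness of the orbit representative follow simultaneously. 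That $\Phi_\lambda$ is a local diffeomorphism is a short computation: the tangent space to $\mathcal{O}_\lambda$ at $\ell$ is $\{Y\mapsto-\ell([X,Y]):X\in\mathfrak{n}\}$, and the map $X\mapsto(\ell([X,X_k]))_{k\in\mathbf{e}}$ has kernel equal to the $B_\ell$-orthogonal complement of $\operatorname{span}\{X_k:k\in\mathbf{e}\}$, which by the nondegeneracy above is exactly the radical $\mathfrak{n}(\ell)$ (the jump set is constant on the orbit, so this complement property is uniform over $\mathcal{O}_\lambda$). Thus $d\Phi_\lambda$ is an isomorphism between equidimensional spaces.

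To upgrade this to a global bijection I would exploit the triangularity of the coadjoint action relative to the strong Malcev flag. Since $\operatorname{ad}(X)$ is nilpotent it carries each one-dimensional quotient $\mathfrak{n}_k/\mathfrak{n}_{k-1}$ to zero, giving $[\mathfrak{n},\mathfrak{n}_k]\subseteq\mathfrak{n}_{k-1}$; expanding \eqref{coadjoint} as $(\exp(tX_j)\cdot\lambda)(X_k)=\lambda\bigl(e^{-t\operatorname{ad}X_j}X_k\bigr)$ then shows that the $k$-th coordinate of $\exp(X)\cdot\lambda$ is a polynomial in the parameters whose dependence on the coordinates of $\lambda$ involves only indices strictly below $k$. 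This triangular, polynomial structure lets one solve the equations $\ell(X_k)=c_k$ $(k\in\mathbf{e})$ uniquely by processing the jump indices in flag order, using for each jump coordinate the symplectic partner supplied by the nondegeneracy of $B_\lambda$; the outcome is a polynomial inverse for $\Phi_\lambda$, establishing the bijection.

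The step I expect to be the main obstacle is precisely this passage from the infinitesimal (local diffeomorphism) statement to the global bijection. The differential computation is immediate, but the global injectivity and surjectivity of $\Phi_\lambda$ require careful bookkeeping of exactly which coordinates are perturbed when one acts by each one-parameter subgroup $\exp(tX_j)$, so that the successive normalizations of the jump coordinates do not interfere with one another. Managing this interference through the flag — rather than any isolated estimate — is the technical heart of the cross-section theorem, and it is carried out in the inductive proof of Theorem $3.1.9$ in \cite{Corwin}.
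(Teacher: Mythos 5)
The paper offers no proof of this proposition at all --- it simply cites Theorem $3.1.9$ of Corwin--Greenleaf --- and your sketch is an accurate outline of exactly that standard argument (the jump indices span a complement of the radical $\mathfrak{n}(\lambda)$, the Kirillov form is nondegenerate on that complement, and the triangular coadjoint action relative to the strong Malcev flag yields a polynomial inverse), with the one genuinely technical step, the global bijectivity of $\Phi_{\lambda}$, ultimately deferred to the same reference. So you are taking essentially the same route as the paper, just with more of the mechanism spelled out.
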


\subsection{Unitary Dual and Plancherel Theory}

The setting in which we are studying sampling spaces requires the following ingredients:

\begin{enumerate}
\item An explicit description of the irreducible representations occurring in
the decomposition of the left regular representation of $N.$

\item The Plancherel measure, and a formula for the Fourier (Plancherel)\ transform.

\item A description of left-invariant multiplicity-free spaces.
\end{enumerate}

\subsubsection{A Realization of the Irreducible Representations of $N$
\label{realization}}

The following discussion is mainly taken from Chapter $6,$ \cite{Folland}. Let
$G$ be a locally compact group, and let $K$ be a closed subgroup of $G.$ Let
us define $q:G\rightarrow G/K$ to be the canonical quotient map and let
$\varphi$ be a unitary representation of the group $K$ acting in some Hilbert
space which we call $\mathbf{H.}$ Next, let $\mathbf{K}_{1}$\textbf{ }be the
set of continuous $\mathbf{H}$-valued functions $f$ defined over $G$
satisfying the following properties:

\begin{itemize}
\item The image of the support of $f$ under the quotient map $q$ is compact.

\item $f\left(  gk\right)  =\left[  \varphi\left(  k\right)  ^{-1}f\right]
\left(  g\right)  $ for $g\in G$ and $k\in K.$
\end{itemize}

Clearly, $G$ acts on the set $\mathbf{K}_{1}$ by left translation. Now, to
simplify the presentation, let us suppose that $G/K$ admits a $G$-invariant
measure (this assumption is not always true.) However, since we are mainly
dealing with unimodular groups, the assumption holds. First, we endow
$\mathbf{K}_{1}$ with the following inner product: $\left\langle f,f^{\prime
}\right\rangle =\int_{G/K}\left\langle f\left(  g\right)  ,f^{\prime}\left(
g\right)  \right\rangle _{\mathbf{H}}\text{ }d\left(  gK\right)  \text{ for
}f,f^{\prime}\in\mathbf{K}_{1}.$ Second, let $\mathbf{K}$ be the Hilbert
completion of the space $\mathbf{K}_{1}$ with respect to this inner product.
The translation operators extend to unitary operators on $\mathbf{K}$ inducing
the unitary representation $\mathrm{Ind}_{K}^{G}\left(  \varphi\right)  $
which acts on $\mathbf{K}$ as follows:%
\[
\left[  \mathrm{Ind}_{K}^{G}\left(  \varphi\right)  \left(  x\right)
f\right]  \left(  g\right)  =f\left(  x^{-1}g\right)  \text{ for }%
f\in\mathbf{K.}%
\]
We notice that if $\varphi$ is a character, then the Hilbert space
$\mathbf{K}$ can be naturally identified with $L^{2}\left(  G/K\right)  .$ The
reader who is not familiar with these notions is invited to refer to Chapter
$6$ of the book of Folland \cite{Folland} for a thorough presentation.

For each linear functional in the set $\Sigma$ (see (\ref{Sigma})), there is a
corresponding unitary irreducible representation of $N$ which is realized as
acting in $L^{2}\left(  \mathbb{R}^{m}\right)  $ as follows. Define a
character $\chi_{\lambda}$ on the normal subgroup $\exp\left(  \mathfrak{p}%
(\lambda)\right)  $ such that $\chi_{\lambda}\left(  \exp X\right)  =e^{2\pi
i\lambda\left(  X\right)  }\text{ for }X\in\mathfrak{p}(\lambda).$ In order to
realize an irreducible representation corresponding to the linear functional
$\lambda$, induce the character $\chi_{\lambda}$ as follows:
\begin{equation}
\sigma_{\lambda}=\mathrm{Ind}_{P_{\lambda}}^{N}\left(  \chi_{\lambda}\right)
,\text{ where }P_{\lambda}=\exp\left(  \mathfrak{p}(\lambda)\right)  .
\label{irreducible}%
\end{equation}
The induced representation $\sigma_{\lambda}$ acts by left translations on the
Hilbert space
\begin{equation}%
\begin{array}
[c]{c}%
\mathbf{H}_{\lambda}=\left\{  f:N\xrightarrow{\hspace*{1cm}}\mathbb{C}%
:f\left(  xy\right)  =\chi_{\lambda}\left(  y\right)  ^{-1}f\left(  x\right)
\text{ for }y\in P_{\lambda}\right. \\
\left.  \text{and }\int_{N/P_{\lambda}}\left\vert f\left(  x\right)
\right\vert ^{2}d\left(  xP_{\lambda}\right)  <\infty\right\}  ,
\end{array}
\label{Hilbert}%
\end{equation}
which is endowed with the following inner product:
\[
\left\langle f,f^{\prime}\right\rangle =\int_{N/P_{\lambda}}f\left(  n\right)
\overline{f^{\prime}\left(  n\right)  }d\left(  nP_{\lambda}\right)  .
\]
Picking a cross-section in $N$ for $N/P_{\lambda},$ since $\chi_{\lambda}$ is
a character there is an obvious identification between $\mathbf{H}_{\lambda}$
and the Hilbert space $L^{2}\left(  N/P_{\lambda}\right)  =L^{2}\left(
\mathbb{R}^{m}\right)  .$

\subsubsection{The Plancherel Measure and the Plancherel
Transform\label{Plancherel}}

For a linear functional $\lambda\in\Omega,$ put $\mathbf{e}=\left\{
\mathbf{e}_{1}<\mathbf{e}_{2}<\cdots<\mathbf{e}_{2m}\right\}  $ and define
\begin{equation}
B\left(  \lambda\right)  =\left[  \lambda\left[  X_{\mathbf{e}_{i}%
},X_{\mathbf{e}_{j}}\right]  \right]  _{1\leq i,j\leq2m}. \label{B}%
\end{equation}
Then $B\left(  \lambda\right)  $ is a skew-symmetric invertible matrix of rank
$2m.$ Let $d\lambda$ be the Lebesgue measure on $\Sigma$ which is parametrized
by a Zariski subset of $\mathbb{R}^{n-2m}.$ Put
\[
d\mu\left(  \lambda\right)  =\left\vert \det B\left(  \lambda\right)
\right\vert ^{1/2}d\lambda.
\]
It is proved in Section $4.3$, \cite{Corwin} that up to multiplication by a
constant, the measure $d\mu\left(  \lambda\right)  $ is the Plancherel measure
for $N$. The group Fourier transform $\mathcal{F}$ is an operator-valued
bounded operator which is weakly defined on $L^{2}(N)\cap L^{1}(N)$ as
follows:
\begin{equation}
\sigma_{\lambda}\left(  f\right)  =\mathcal{F}\left(  f\right)  \left(
\lambda\right)  =\int_{\Sigma}f\left(  n\right)  \sigma_{\lambda}\left(
n^{-1}\right)  dn\text{ where }f\in L^{2}(N)\cap L^{1}(N). \label{Fourier}%
\end{equation}
Moreover, given $\mathbf{u},\mathbf{v}\in L^{2}\left(  \mathbb{R}^{m}\right)
,$ we have
\[
\left\langle \sigma_{\lambda}\left(  f\right)  \mathbf{u},\mathbf{v}%
\right\rangle =\int_{\Sigma}f\left(  n\right)  \left\langle \sigma_{\lambda
}\left(  n^{-1}\right)  \mathbf{u},\mathbf{v}\right\rangle dn.
\]
Next, the Plancherel transform is a unitary operator
\[
\mathcal{P}:L^{2}(N)\xrightarrow{\hspace*{1cm}}L^{2}\left(  \Sigma
,L^{2}\left(  \mathbb{R}^{m}\right)  \otimes L^{2}\left(  \mathbb{R}%
^{m}\right)  ,d\mu\left(  \lambda\right)  \right)
\]
which is obtained by extending the Fourier transform to $L^{2}(N)$. This
extension induces the equality
\[
\left\Vert f\right\Vert _{L^{2}\left(  N\right)  }^{2}=\int_{\Sigma}\left\Vert
\widehat{f}\left(  \sigma_{\lambda}\right)  \right\Vert _{\mathcal{HS}}%
^{2}d\mu\left(  \lambda\right)
\]
where $\widehat{f}\left(  \sigma_{\lambda}\right)  =\mathcal{P}f\left(
\lambda\right)  $. Let $L$ be the left regular representation of the nilpotent
group $N.$ It is easy to check that for almost every $\lambda\in\Sigma$ (with
respect to the Plancherel measure)
\[
\left(  \mathcal{P}L\left(  n\right)  \mathcal{P}^{-1}A\right)  \left(
\sigma_{\lambda}\right)  =\sigma_{\lambda}\left(  n\right)  \circ A\left(
\sigma_{\lambda}\right)  .
\]
In other words, the Plancherel transform intertwines the regular
representation with a direct integral of irreducible representations of $N.$
The irreducible representations occurring in the decomposition are
parametrized up to a null set by the manifold $\Sigma$ and each irreducible
representation occurs with infinite multiplicities in the decomposition.

\subsection{Bandlimited Multiplicity-Free Spaces}

Given any measurable set $\mathbf{A}\subseteq\Sigma$, it is easily checked
that the Hilbert space
\[
\mathcal{P}^{-1}\left(  L^{2}\left(  \mathbf{A,}\text{ }L^{2}\left(
\mathbb{R}^{m}\right)  \otimes L^{2}\left(  \mathbb{R}^{m}\right)  \right)
,d\mu\left(  \lambda\right)  \right)
\]
is a left-invariant subspace of $L^{2}\left(  N\right)  .$ Let us suppose that
$\mathbf{A}$ is a \textbf{bounded} subset of $\Sigma$ of positive Plancherel
measure. Letting $\left\vert \mathbf{A}\right\vert $ be the Lebesgue measure
of the set $\mathbf{A}$
\begin{equation}
\mu\left(  \mathbf{A}\right)  =\int_{\mathbf{A}}\left\vert \det B\left(
\lambda\right)  \right\vert ^{1/2}d\lambda\leq\left\vert \mathbf{A}\right\vert
\text{ }\sup\left\{  \left\vert \det B\left(  \lambda\right)  \right\vert
^{1/2}:\lambda\in\mathbf{A}\right\}  . \label{finite}%
\end{equation}
Next, since $\lambda\mapsto|\det B(\lambda)|^{\frac{1}{2}}$ is a continuous
function then $\mu(\mathbf{A})$ is finite. Fix a measurable field $\left(
\mathbf{u}\left(  \lambda\right)  \right)  _{\lambda\in\mathbf{A}}$ of unit
vectors in $L^{2}\left(  \mathbb{R}^{m}\right)  .$ Put%

\begin{equation}%
\begin{array}
[c]{c}%
\mathbf{H}_{\mathbf{A}}=\left\{  f\in L^{2}\left(  N\right)  :\mathcal{P}%
f\left(  \lambda\right)  =\left\{
\begin{array}
[c]{c}%
\mathbf{v}\left(  \lambda\right)  \otimes\mathbf{u}\left(  \lambda\right)
\text{ if }\lambda\in\mathbf{A}\\
0\otimes0\text{ if }\lambda\notin\mathbf{A}%
\end{array}
\right.  \right.  \text{ and }\\
\left.
\begin{array}
[c]{c}%
\left(  \mathbf{v}\left(  \lambda\right)  \otimes\mathbf{u}\left(
\lambda\right)  \right)  _{\lambda\in\mathbf{A}}\text{ is a measurable }\\
\text{field of rank-one operators}%
\end{array}
\text{ }\right\}  .
\end{array}
\label{HA}%
\end{equation}
Then $\mathbf{H}_{\mathbf{A}}$ is a left-invariant, band-limited and
multiplicity-free subspace of $L^{2}\left(  N\right)  .$ Let $h\in
\mathbf{H}_{\mathbf{A}}$ such that the Plancherel transform of $h$ is a
measurable field of rank-one operators. More precisely, let us assume that
\begin{equation}
\mathcal{P}h\left(  \sigma_{\lambda}\right)  =\widehat{h}\left(
\sigma_{\lambda}\right)  =\left\{
\begin{array}
[c]{c}%
\mathbf{u}\left(  \lambda\right)  \otimes\mathbf{u}\left(  \lambda\right)
\text{ if }\lambda\in\mathbf{A}\\
0\otimes0\text{ if }\lambda\notin\mathbf{A}%
\end{array}
\right.  \label{h}%
\end{equation}
and
\[
\left[  V_{h}^{L}\left(  f\right)  \right]  \left(  \exp\left(  X\right)
\right)  =\left\langle f,L\left(  \exp\left(  X\right)  \right)
h\right\rangle =f\ast h^{\ast}\left(  \exp\left(  X\right)  \right)
\]
where $h^{\ast}\left(  x\right)  =\overline{h\left(  x^{-1}\right)  }$ and
$f\ast g\left(  n\right)  =\int_{N}f\left(  m\right)  g\left(  m^{-1}n\right)
dm.$

\begin{proposition}
If $\mathbf{A}$ is a bounded subset of $\Sigma$ of positive Plancherel measure
and if $h$ is as given in (\ref{h})\ then $h$ is an admissible vector for the
representation $\left(  L,\mathbf{H}_{\mathbf{A}}\right)  .$
\end{proposition}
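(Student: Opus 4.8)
The plan is to show that the abstract admissibility condition from~\eqref{admissibility} holds for the left regular representation $L$ restricted to the band-limited space $\mathbf{H}_{\mathbf{A}}$, with the specific vector $h$ singled out in~\eqref{h}. Recall that $h$ is admissible precisely when the coefficient map $V_h^L\colon f\mapsto \langle f, L(\cdot)h\rangle$ is an isometry of $\mathbf{H}_{\mathbf{A}}$ into $L^2(N)$. Since $\mathcal{P}$ is unitary and intertwines $L$ with the direct integral of the $\sigma_\lambda$, I would transfer the entire computation to the Plancherel side, where $\mathbf{H}_{\mathbf{A}}$ is identified with the field of rank-one operators $\mathbf{v}(\lambda)\otimes\mathbf{u}(\lambda)$ supported on $\mathbf{A}$, and where the $L^2(N)$-norm is the integrated Hilbert–Schmidt norm $\int_{\mathbf{A}}\|\cdot\|_{\mathcal{HS}}^2\,d\mu(\lambda)$.

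The key step is to evaluate the matrix coefficient fiberwise. For $f\in\mathbf{H}_{\mathbf{A}}$ with $\mathcal{P}f(\lambda)=\mathbf{v}(\lambda)\otimes\mathbf{u}(\lambda)$, the Plancherel inversion gives
\[
V_h^L(f)(x)=\langle f, L(x)h\rangle=\int_{\mathbf{A}}\operatorname{tr}\!\left[(\mathbf{v}(\lambda)\otimes\mathbf{u}(\lambda))\,(\sigma_\lambda(x)(\mathbf{u}(\lambda)\otimes\mathbf{u}(\lambda)))^{\ast}\right]d\mu(\lambda).
\]
Using that $\mathbf{u}(\lambda)$ is a unit vector and that $\sigma_\lambda(x)$ is unitary, the fiber $\mathcal{P}[V_h^L(f)](\lambda)$ collapses to a rank-one operator whose Hilbert–Schmidt norm equals $\|\mathbf{v}(\lambda)\|^2\|\mathbf{u}(\lambda)\|^2=\|\mathbf{v}(\lambda)\|^2$. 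Integrating against $d\mu(\lambda)$ over $\mathbf{A}$ then recovers exactly $\|f\|_{L^2(N)}^2$, which is the desired isometry identity. In effect this is the orthogonality-relations / reproducing-formula computation for square-integrable representations, here applied fiber-by-fiber to each irreducible $\sigma_\lambda$ occurring in the decomposition; the unit-norm normalization of the field $(\mathbf{u}(\lambda))_{\lambda\in\mathbf{A}}$ is precisely what forces the isometry constant to be $1$.

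The main obstacle is justifying that the fiberwise matrix-coefficient computation assembles into a genuine element of $L^2(N)$ and that the Fubini-type interchange of the trace (integration over the orbit) with the outer integral over $\mathbf{A}$ is legitimate. This is where boundedness of $\mathbf{A}$ together with the finiteness $\mu(\mathbf{A})<\infty$ established in~\eqref{finite} does the work: it guarantees integrability and lets me invoke the standard continuity and square-integrability of matrix coefficients of the $\sigma_\lambda$, so that $V_h^L(f)$ is well defined and continuous. Once the measurability of the field and the finiteness of $\mu(\mathbf{A})$ are in hand, the identity $\|V_h^L(f)\|_2^2=\|f\|_2^2$ follows, and admissibility is established.
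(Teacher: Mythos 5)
Your proposal is correct and follows essentially the same route as the paper: pass to the Plancherel side, observe that the fiber of $\mathcal{P}[V_h^L(f)]$ at $\lambda$ is $\widehat{f}(\sigma_\lambda)\circ(\mathbf{u}(\lambda)\otimes\mathbf{u}(\lambda))=\mathbf{v}(\lambda)\otimes\mathbf{u}(\lambda)$ because $\mathbf{u}(\lambda)$ is a unit vector, and integrate the Hilbert--Schmidt norms over $\mathbf{A}$ to recover $\Vert f\Vert_{L^2(N)}^2$. The paper's only additional remark is the preliminary check that $\Vert h\Vert_{L^2(N)}^2=\mu(\mathbf{A})<\infty$, which your appeal to the boundedness of $\mathbf{A}$ and \eqref{finite} already covers.
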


\begin{proof}
To check that $h$ is well-defined as an element of the Hilbert space
$\mathbf{H}_{\mathbf{A}},$ it is enough to verify that
\[
\left\Vert h\right\Vert _{L^{2}\left(  N\right)  }^{2}=\int_{\mathbf{A}%
}\left\Vert \mathbf{u}\left(  \lambda\right)  \otimes\mathbf{u}\left(
\lambda\right)  \right\Vert _{\mathcal{HS}}^{2}\text{ }d\mu\left(
\lambda\right)  =\mu\left(  \mathbf{A}\right)
\]
is finite. Next, for any vector $f\in\mathbf{H}_{\mathbf{A}},$ the square of
the norm of the image of $f$ under the map $V_{h}^{L}$ is computed as
follows:
\begin{align*}
\left\Vert V_{h}^{L}\left(  f\right)  \right\Vert _{L^{2}\left(  N\right)
}^{2}  &  =\int_{\mathbf{A}}\left\Vert \widehat{f}\left(  \sigma_{\lambda
}\right)  \left(  \mathbf{u}\left(  \lambda\right)  \otimes\mathbf{u}\left(
\lambda\right)  \right)  \right\Vert _{\mathcal{HS}}^{2}d\mu\left(
\lambda\right) \\
&  =\int_{\mathbf{A}}\left\langle \widehat{f}\left(  \sigma_{\lambda}\right)
\mathbf{u}\left(  \lambda\right)  ,\widehat{f}\left(  \sigma_{\lambda}\right)
\mathbf{u}\left(  \lambda\right)  \right\rangle _{L^{2}\left(  \mathbb{R}%
^{m}\right)  }d\mu\left(  \lambda\right)  .
\end{align*}
Letting $\widehat{f}\left(  \sigma_{\lambda}\right)  =\mathbf{v}\left(
\lambda\right)  \otimes\mathbf{u}\left(  \lambda\right)  $ where
$\mathbf{v}\left(  \lambda\right)  $ is in $L^{2}\left(  \mathbb{R}%
^{m}\right)  ,$ it follows that
\[
\left\Vert V_{h}^{L}\left(  f\right)  \right\Vert _{L^{2}\left(  N\right)
}^{2}=\int_{\mathbf{A}}\left\langle \mathbf{v}\left(  \lambda\right)
,\mathbf{v}\left(  \lambda\right)  \right\rangle _{L^{2}\left(  \mathbb{R}%
^{m}\right)  }\text{ }d\mu\left(  \lambda\right)  =\left\Vert f\right\Vert
_{L^{2}\left(  N\right)  }^{2}.
\]
In other words, the map $V_{h}^{L}$ defines an isometry from $\mathbf{H}%
_{\mathbf{A}}$ into $L^{2}\left(  N\right)  $ and the representation $\left(
L,\mathbf{H}_{\mathbf{A}}\right)  $ which is a subrepresentation of the left
regular representation of $N$ is admissible. Thus, the vector $h$ is an
admissible vector.
\end{proof}

It is also worth noting that $h$ is convolution idempotent in the sense that
$h=h\ast h^{\ast}=h^{\ast}.$ Next, $V_{h}^{L}\left(  \mathbf{H}_{\mathbf{A}%
}\right)  $ is a left-invariant vector subspace of $L^{2}\left(  N\right)  $
consisting of continuous functions. Moreover, the projection onto the Hilbert
space $V_{h}^{L}\left(  \mathbf{H}_{\mathbf{A}}\right)  $ is given by right
convolution in the sense that $V_{h}^{L}\left(  \mathbf{H}_{\mathbf{A}%
}\right)  =L^{2}\left(  N\right)  \ast h.$


\vskip 0.2 cm\noindent In order to simplify our presentation, we shall
naturally identify the Hilbert space $\mathbf{H}_{\mathbf{A}}$ with
$L^{2}\left(  \mathbf{A}\times\mathbb{R}^{m},d\mu\left(  \lambda\right)
dt\right)  $. This identification is given by the map
\[
\left(  \mathbf{v}\left(  \lambda\right)  \otimes\mathbf{u}\left(
\lambda\right)  \right)  _{\lambda\in\mathbf{A}}\mathbf{\mapsto}\left[
\mathbf{v}\left(  \lambda\right)  \right]  \left(  t\right)  :=\mathbf{v}%
\left(  \lambda,t\right)
\]
for any measurable field of rank-one operators $\left(  \mathbf{v}\left(
\lambda\right)  \otimes\mathbf{u}\left(  \lambda\right)  \right)  _{\lambda
\in\mathbf{A}}.$


\begin{lemma}
\label{admissible and sampling}Let $\pi$ be a unitary representation of a
group $N$ acting in a Hilbert space $\mathbf{H}_{\pi}.$ Assume that $\pi$ is
admissible, and let $h$ be an admissible vector for $\pi.$ Furthermore,
suppose that $\pi\left(  \Gamma\right)  h$ is a tight frame with frame bound
$C_{h}$. Then the vector space $V_{h}\left(  \mathbf{H}_{\pi}\right)  $ is a
left-invariant closed subspace of $L^{2}\left(  N\right)  $ consisting of
continuous functions and $V_{h}\left(  \mathbf{H}_{\pi}\right)  $ is a
sampling space with sinc-type function $\frac{1}{C_{h}}V_{h}\left(  h\right)
.$
\end{lemma}

Lemma \ref{admissible and sampling} is proved in Proposition $2.54$,
\cite{Fuhr cont}. This result establishes a connection between admissibility
and sampling theories. This connection will play a central role in the proof
of our main results. The following result is a slight extension of Proposition
$2.61$ \cite{Fuhr cont}, the proof given here is essentially inspired by the
one given in the Monograph \cite{Fuhr cont}.

\begin{lemma}
\label{sampling}Let $\Gamma$ be a discrete subgroup of $N$ with positive
co-volume Let $\pi$ be a unitary representation of $N$ acting in a Hilbert
space $\mathbf{H}_{\pi}.$ If the restriction of $\pi$ to the discrete subgroup
$\Gamma$ is unitarily equivalent to a subrepresentation of the left regular
representation of $\Gamma$ then there exists a subspace of $L^{2}\left(
N\right)  $ which is a sampling space with respect to $\Gamma.$ Moreover, if
$\pi$ is equivalent to the left regular representation of $\Gamma$ then there
exists a subspace of $L^{2}\left(  N\right)  $ which is a sampling space with
the interpolation property with respect to $\Gamma.$
\end{lemma}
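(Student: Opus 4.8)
The plan is to reduce the statement to Lemma~\ref{admissible and sampling}: it suffices to produce a single vector $h\in\mathbf{H}_{\pi}$ which is simultaneously an admissible vector for $\pi$ and has the property that $\pi(\Gamma)h$ is a tight frame for $\mathbf{H}_{\pi}$. Once such an $h$ is found, Lemma~\ref{admissible and sampling} immediately exhibits the desired sampling space as $V_{h}(\mathbf{H}_{\pi})\subseteq L^{2}(N)$, with sinc-type function $C_{h}^{-1}V_{h}(h)$. The hypothesis on $\Gamma$ is precisely what is needed to manufacture the \emph{frame} half of this requirement, while the \emph{admissibility} half will come from realizing $\pi$ as a subrepresentation of the left regular representation $L$ of $N$ (which is the situation in every application, e.g.\ $\pi=L_{\mathbf{H}_{\mathbf{A}}}$, where an admissible vector of the explicit form (\ref{h}) is already available).

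First I would construct the tight frame. Write $\lambda_{\Gamma}$ for the left regular representation of $\Gamma$ on $\ell^{2}(\Gamma)$, let $\{\delta_{\gamma}\}_{\gamma\in\Gamma}$ be its canonical orthonormal basis, and recall $\lambda_{\Gamma}(\gamma)\delta_{e}=\delta_{\gamma}$. By hypothesis there is a $\Gamma$-invariant closed subspace $\mathbf{K}\subseteq\ell^{2}(\Gamma)$ and a unitary $U:\mathbf{H}_{\pi}\to\mathbf{K}$ with $U\pi(\gamma)=\lambda_{\Gamma}(\gamma)U$ for all $\gamma\in\Gamma$. Let $P$ be the orthogonal projection of $\ell^{2}(\Gamma)$ onto $\mathbf{K}$; since $\mathbf{K}$ is invariant, $P$ commutes with every $\lambda_{\Gamma}(\gamma)$. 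Put $h=U^{-1}(P\delta_{e})$. Then $\pi(\gamma)h=U^{-1}\lambda_{\Gamma}(\gamma)P\delta_{e}$, and for $f\in\mathbf{H}_{\pi}$ a direct computation gives
\[
\langle f,\pi(\gamma)h\rangle
=\langle Uf,\lambda_{\Gamma}(\gamma)P\delta_{e}\rangle
=\langle Uf,P\delta_{\gamma}\rangle
=\langle Uf,\delta_{\gamma}\rangle
=(Uf)(\gamma),
\]
where the third equality uses $Uf\in\mathbf{K}$ and $P^{\ast}=P$. Summing over $\gamma$ and using that $U$ is unitary yields $\sum_{\gamma\in\Gamma}\lvert\langle f,\pi(\gamma)h\rangle\rvert^{2}=\lVert Uf\rVert^{2}=\lVert f\rVert^{2}$, so $\pi(\Gamma)h$ is a Parseval frame, i.e.\ a tight frame with bound $C_{h}=1$. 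This is the one place the $\Gamma$-equivalence is used, and it is essentially the elementary fact that orthogonal projections of orthonormal bases are Parseval frames.

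The step I expect to be the main obstacle is verifying that this same $h$ is an admissible vector for $\pi$, since the hypothesis only constrains $\pi$ on the discrete subgroup $\Gamma$, whereas admissibility is a condition for the continuous group $N$ (namely that $V_{h}$ be an isometry of $\mathbf{H}_{\pi}$ into $L^{2}(N)$). In the setting of interest $\pi$ is a subrepresentation of $L$ on a band-limited invariant subspace, whose admissible vectors are exactly the unit fields described by (\ref{h}); I would check that the frame generator $h$ above can be taken of this form. The delicate point is to reconcile the continuous normalization $\lVert V_{h}f\rVert_{L^{2}(N)}=\lVert f\rVert$ with the discrete Parseval identity for one and the same $h$, and this is precisely where the positive co-volume of $\Gamma$ enters, matching the Plancherel normalization on $N$ to the counting normalization on $\Gamma$. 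Granting this, Lemma~\ref{admissible and sampling} applies with $C_{h}=1$ and shows that $V_{h}(\mathbf{H}_{\pi})$ is a sampling space with respect to $\Gamma$.

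Finally, for the interpolation statement, suppose $\pi|_{\Gamma}$ is equivalent to the \emph{full} regular representation $\lambda_{\Gamma}$. Then $\mathbf{K}=\ell^{2}(\Gamma)$, $P$ is the identity, and $h=U^{-1}\delta_{e}$, so the computation above shows that $\pi(\Gamma)h=\{U^{-1}\delta_{\gamma}\}_{\gamma\in\Gamma}$ is an orthonormal basis of $\mathbf{H}_{\pi}$ rather than merely a Parseval frame. Consequently, for $F=V_{h}f\in V_{h}(\mathbf{H}_{\pi})$ one has $F|_{\Gamma}=Uf$, and since $U$ is onto, the restriction map $F\mapsto F|_{\Gamma}$ is surjective onto $\ell^{2}(\Gamma)$; this is exactly the interpolation property. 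This is the second half of the cited Proposition~$2.61$ of \cite{Fuhr cont}, which I would invoke to conclude.
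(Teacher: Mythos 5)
Your construction of the Parseval frame generator $h=U^{-1}(P\delta_{e})$ and the verification that $\left\langle f,\pi(\gamma)h\right\rangle=(Uf)(\gamma)$ coincide with the paper's argument (the paper calls this vector $\eta=T^{-1}(P(\kappa))$), and your treatment of the interpolation case is also the paper's. The genuine gap is exactly at the point you flag and then skip with ``Granting this'': the admissibility of $h$ for $\pi$ as a representation of all of $N$. You propose to obtain it by assuming $\pi$ is a band-limited subrepresentation of $L$ and that the admissible vector can be taken of the special form (\ref{h}); but the lemma is stated for an arbitrary unitary representation $\pi$ of $N$, and for Lemma \ref{admissible and sampling} to apply you need the admissible vector and the Parseval frame generator to be the \emph{same} vector, which your route does not deliver. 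The paper closes this gap with a short computation that you should supply: write $N=A\Gamma$ with $A$ a measurable fundamental domain, normalize the Haar measure so that $\left\vert A\right\vert=1$ (this is where positive co-volume is used), and unfold
\[
\left\Vert V_{h}^{\pi}(f)\right\Vert_{L^{2}(N)}^{2}
=\int_{N}\left\vert\left\langle f,\pi(x)h\right\rangle\right\vert^{2}dx
=\int_{A}\sum_{\gamma\in\Gamma}\left\vert\left\langle \pi\left(x^{-1}\right)f,\pi(\gamma)h\right\rangle\right\vert^{2}dx .
\]
For each fixed $x$ the inner sum equals $\left\Vert\pi\left(x^{-1}\right)f\right\Vert^{2}=\left\Vert f\right\Vert^{2}$ by the Parseval frame property you already established, so the integral is $\left\Vert f\right\Vert^{2}\left\vert A\right\vert=\left\Vert f\right\Vert^{2}$. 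Thus the discrete Parseval identity by itself forces admissibility of the same vector $h$ for the continuous group, with constant exactly $1$; no appeal to the structure of $\pi$ or to (\ref{h}) is needed, and Lemma \ref{admissible and sampling} then applies with $C_{h}=1$. With that computation inserted, the rest of your argument, including the surjectivity of the restriction map in the interpolation case, goes through as in the paper.
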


\begin{proof}
Let $T:\mathbf{H}_{\pi}\rightarrow\mathbf{H}\subset l^{2}\left(
\Gamma\right)  $ be a unitary map which is intertwining the restricted
representation of $\pi$ to $\Gamma$ with a representation which is a
subrepresentation of the left regular representation of the lattice $\Gamma.$
Since $\Gamma$ is a discrete group, the left regular representation of
$\Gamma$ is admissible. To see this, let $\kappa$ be the sequence which is
equal to one at the identity of $\Gamma$ and zero everywhere else. By shifting
the sequence $\kappa$ by elements in $\Gamma,$ we generate an orthonormal
basis for the Hilbert space $l^{2}\left(  \Gamma\right)  .$ Now, let
$P:l^{2}\left(  \Gamma\right)  \rightarrow\mathbf{H}$ be an orthogonal
projection. Next, the vector $\eta=T^{-1}\left(  P\left(  \kappa\right)
\right)  $ is an admissible vector for $\pi|_{\Gamma}$ as well. We recall that
$V_{\eta}^{\pi}\left(  f\right)  =\left\langle f,\pi\left(  \cdot\right)
\eta\right\rangle .$ Let $N=A\Gamma$ where $A$ is a set of finite measure with
respect to the Haar measure of $N.$ Without loss of generality, let us assume
that a Haar measure for $N$ is fixed so that $\left\vert A\right\vert =1.$
Then
\begin{align*}
\left\Vert V_{\eta}^{\pi}\left(  f\right)  \right\Vert _{L^{2}\left(
N\right)  }^{2}  &  =\int_{N}\left\vert \left\langle f,\pi\left(  x\right)
\eta\right\rangle \right\vert ^{2}dx\\
&  =\int_{A}{\sum\limits_{\gamma\in\Gamma}}\left\vert \left\langle
f,\pi\left(  x\gamma\right)  \eta\right\rangle \right\vert ^{2}dx\\
&  =\int_{A}{\sum\limits_{\gamma\in\Gamma}}\left\vert \left\langle \pi\left(
x^{-1}\right)  f,\pi\left(  \gamma\right)  \eta\right\rangle \right\vert
^{2}dx.
\end{align*}
Next, since $\pi\left(  \Gamma\right)  \eta$ is a Parseval frame in
$\mathbf{H}_{\pi},$
\[
{\sum\limits_{\gamma\in\Gamma}}\left\vert \left\langle \pi\left(
x^{-1}\right)  f,\pi\left(  \gamma\right)  \eta\right\rangle \right\vert
^{2}=\left\Vert \pi\left(  x^{-1}\right)  f\right\Vert _{\mathbf{H}_{\pi}}^{2}%
\]
and it follows that%
\[
\left\Vert V_{\eta}^{\pi}\left(  f\right)  \right\Vert _{L^{2}\left(
N\right)  }^{2}=\int_{A}\left\Vert \pi\left(  x^{-1}\right)  f\right\Vert
_{\mathbf{H}_{\pi}}^{2}dx=\left\Vert f\right\Vert _{\mathbf{H}_{\pi}}%
^{2}\left\vert A\right\vert =\left\Vert f\right\Vert _{\mathbf{H}_{\pi}}^{2}.
\]
Thus $\eta$ is a continuous wavelet for the representation $\pi$, $\pi\left(
\Gamma\right)  \left(  \eta\right)  $ is a Parseval frame, and the Hilbert
space $V_{\eta}^{\pi}\left(  \mathbf{H}_{\pi}\right)  $ is a sampling space of
$L^{2}\left(  N\right)  $ with respect to the lattice $\Gamma.$ Now, for the
second part, if we assume that $\pi$ is equivalent to the left regular
representation, then the operator $P$ described above is just the identity
map. Next, $\pi\left(  \Gamma\right)  \eta=\pi\left(  \Gamma\right)  \left(
T^{-1}\left(  \kappa\right)  \right)  $ is an orthonormal basis of
$\mathbf{H}_{\pi}$ and $V_{\eta}\left(  \eta\right)  $ is a sinc-type
function. It follows from Theorem $2.56$ \cite{Fuhr cont} that $V_{\eta
}\left(  \mathbf{H}_{\pi}\right)  $ is a sampling space of $L^{2}\left(
N\right)  $ which has the interpolation property with respect to the lattice
$\Gamma.$
\end{proof}

\begin{remark}
Let $\mathbf{H}_{\mathbf{A}}$ be the Hilbert space of band-limited functions
as described in (\ref{HA}). We recall that $\Gamma=\exp\left(  \mathbb{Z}%
X_{1}\right)  \cdots\exp\left(  \mathbb{Z}X_{n}\right)  $ is a discrete
uniform subgroup of $N.$ Since $\mathbf{H}_{\mathbf{A}}$ is left-invariant,
the regular representation of $N$ admits a subrepresentation obtained by
restricting the action of the left regular representation to the Hilbert space
$\mathbf{H}_{\mathbf{A}}.$ Let us denote such a representation by
$L_{\mathbf{H}_{\mathbf{A}}}.$ Furthermore, let $L_{\mathbf{H}_{\mathbf{A}%
},\Gamma}$ be the restriction of $L_{\mathbf{H}_{\mathbf{A}}}$ to $\Gamma.$ If
the representation $L_{\mathbf{H}_{\mathbf{A}},\Gamma}$ is unitarily
equivalent to a subrepresentation of the left regular representation of the
discrete group $\Gamma$ then according to arguments used in the proof of Lemma
\ref{sampling}, there exists a vector $\eta$ such that $V_{\eta}^{L}\left(
\mathbf{H}_{\mathbf{A}}\right)  $ is a sampling space of $L^{2}\left(
N\right)  $ with respect to the discrete uniform group $\Gamma.$ In the
present work, we are aiming to find conditions on the spectral set
$\mathbf{A}$ which guarantees that $L_{\mathbf{H}_{\mathbf{A}},\Gamma}$ is
unitarily equivalent to a subrepresentation of the left regular representation
of the discrete group $\Gamma.$ We shall also prove that under the assumptions
given in Condition \ref{cond}, it is possible to find $\mathbf{A}$ such that
$V_{\eta}^{L}\left(  \mathbf{H}_{\mathbf{A}}\right)  $ is a sampling space of
$L^{2}\left(  N\right)  $ with respect to the discrete uniform group $\Gamma.$
\end{remark}

\section{Intermediate Results\label{intermediateresult}}

Let us now fix assumptions and specialize the theory of harmonic analysis of
nilpotent Lie groups to the class of groups being considered here.

\vskip0.3cm \noindent Let $N$ be a simply connected, connected non-commutative
nilpotent Lie group with Lie algebra $\mathfrak{n}$ with rational structure
constants such that $N=PM=\exp\left(  \mathfrak{p}\right)  \exp\left(
\mathfrak{m}\right)  $ where $\mathfrak{p}$ and $\mathfrak{m}$ are commutative
Lie algebras, and $\mathfrak{p}$ is an ideal of $\mathfrak{n}.$ We fix a
strong Malcev basis
\begin{equation}
\left\{  Z_{1},\cdots,Z_{p},A_{1},\cdots,A_{m}\right\}  \label{Malcev}%
\end{equation}
for $\mathfrak{n}$ such that $\left\{  Z_{1},\cdots,Z_{p}\right\}  $ is a
basis for $\mathfrak{p}$ and $\left\{  A_{1},\cdots,A_{m}\right\}  $ is a
basis for $\mathfrak{m.}$ Therefore, $N$ is isomorphic to the semi-direct
product group $P\rtimes M$ endowed with the multiplication law
\[
\left(  \exp Z,\exp A\right)  \left(  \exp Z^{\prime},\exp A^{\prime}\right)
=\left(  \exp\left(  Z+e^{adA}Z^{\prime}\right)  ,\exp\left(  A+A^{\prime
}\right)  \right)  .
\]
Moreover it is assumed that
\[
\Gamma=\exp\left(  {\sum\limits_{k=1}^{p}}\mathbb{Z}Z_{k}\right)  \exp\left(
{\sum\limits_{k=1}^{m}}\mathbb{Z}A_{k}\right)
\]
is a discrete uniform subgroup of $N.$ Indeed, in order to ensure that
$\Gamma$ is a discrete uniform group, it is enough to pick $\left\{
A_{1},\cdots,A_{m}\right\}  $ such that the matrix representation of
$e^{adA_{k}}|\mathfrak{p}$ with respect to the basis $\left\{  Z_{1}%
,\cdots,Z_{p}\right\}  $ has entries in $\mathbb{Z}.$

\vskip 0.3cm \noindent If $\mathbf{M}\left(  \lambda\right)  $ is the
skew-symmetric matrix described in (\ref{Mlambda}) then
\[
\mathbf{M}\left(  \lambda\right)  =\left[
\begin{array}
[c]{cccccc}%
0 & \cdots & 0 & \lambda\left[  Z_{1},A_{1}\right]  & \cdots & \lambda\left[
Z_{1},A_{m}\right] \\
\vdots & \ddots & \vdots & \vdots & \ddots & \vdots\\
0 & \cdots & 0 & \lambda\left[  Z_{p},A_{1}\right]  & \cdots & \lambda\left[
Z_{p},A_{m}\right] \\
\lambda\left[  A_{1},Z_{1}\right]  & \cdots & \lambda\left[  A_{1}%
,Z_{p}\right]  & 0 & \cdots & 0\\
\vdots & \ddots & \vdots & \vdots & \ddots & \vdots\\
\lambda\left[  A_{m},Z_{1}\right]  & \cdots & \lambda\left[  A_{m}%
,Z_{p}\right]  & 0 & \cdots & 0
\end{array}
\right]  .
\]

\vskip0.3cm \noindent Regarding $\mathbf{M}\left(  \lambda\right)  $ as the
matrix representation of a linear operator acting on $\mathfrak{n,}$ we recall
that the null-space of $\mathbf{M}\left(  \lambda\right)  $ corresponds to
$\mathfrak{n}\left(  \lambda\right)  .$ Now, let $\mathbf{M}_{k}\left(
\lambda\right)  $ be the matrix obtained by retaining the first $k$ columns of
$\mathbf{M}\left(  \lambda\right)  $ (see illustration below)
\[
\underset{\mathbf{M}\left(  \lambda\right)  =\mathbf{M}_{n}\left(
\lambda\right)  }{\mathbf{M}\left(  \lambda\right)  =\underbrace
{\underset{\mathbf{M}_{p+1}\left(  \lambda\right)  }{\underbrace
{\underset{\mathbf{M}_{p}\left(  \lambda\right)  }{\underbrace{\underset
{\mathbf{M}_{1}\left(  \lambda\right)  }{\underbrace{\left[
\begin{array}
[c]{c}%
0\\
\vdots\\
0\\
\lambda\left[  A_{1},Z_{1}\right] \\
\vdots\\
\lambda\left[  A_{m},Z_{1}\right]
\end{array}
\right.  }}%
\begin{array}
[c]{cc}%
\cdots & 0\\
\ddots & \vdots\\
\cdots & 0\\
\cdots & \lambda\left[  A_{1},Z_{p}\right] \\
\ddots & \vdots\\
\cdots & \lambda\left[  A_{m},Z_{p}\right]
\end{array}
}}%
\begin{array}
[c]{c}%
\lambda\left[  Z_{1},A_{1}\right] \\
\vdots\\
\lambda\left[  Z_{p},A_{1}\right] \\
0\\
\vdots\\
0
\end{array}
}}\left.
\begin{array}
[c]{cc}%
\cdots & \lambda\left[  Z_{1},A_{m}\right] \\
\ddots & \vdots\\
\cdots & \lambda\left[  Z_{p},A_{m}\right] \\
\cdots & 0\\
\ddots & \vdots\\
\cdots & 0
\end{array}
\right]  }}.
\]
Although $\mathbf{M}_{0}\left(  \lambda\right)  $ is not defined, we shall
need to assume that $\mathrm{rank}\left(  \mathbf{M}_{0}\left(  \lambda
\right)  \right)  =0.$ Put
\[
X_{1}=Z_{1},\cdots,X_{p}=Z_{p},X_{p+1}=A_{1},\cdots,X_{n}=A_{m}.
\]

\begin{lemma}
Given $\lambda\in\mathfrak{n}^{\ast},$ the following holds true.
\[
\left\{  1\leq k\leq n:\mathrm{rank}\left(  \mathbf{M}_{k}\left(
\lambda\right)  \right)  >\mathrm{rank}\left(  \mathbf{M}_{k-1}\left(
\lambda\right)  \right)  \right\}  =\left\{  1\leq k\leq n:\mathfrak{n}%
_{k}\text{ }\nsubseteq\text{ }\mathfrak{n}_{k-1}+\mathfrak{n}\left(
\lambda\right)  \right\}  .
\]

\end{lemma}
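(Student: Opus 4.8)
The claim is an equality of two subsets of $\{1,\dots,n\}$, so I would prove it by showing that for each fixed $k$ the defining conditions are equivalent. The left-hand condition compares ranks of the truncated matrices $\mathbf{M}_{k}(\lambda)$ and $\mathbf{M}_{k-1}(\lambda)$ (columns of the skew-symmetric form $\mathbf{M}(\lambda)$), while the right-hand condition is the jump-index condition $\mathfrak{n}_{k}\nsubseteq\mathfrak{n}_{k-1}+\mathfrak{n}(\lambda)$. The bridge between them is the identification, recalled in the excerpt, of $\mathfrak{n}(\lambda)$ with the null-space of $\mathbf{M}(\lambda)$ together with the fact that $\mathfrak{n}_{k}=\mathbb{R}\text{-span}\{X_{1},\dots,X_{k}\}$.

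First I would translate the rank-jump condition into linear-algebra language. The column space of $\mathbf{M}_{k}(\lambda)$ is spanned by the images $\mathbf{M}(\lambda)X_{1},\dots,\mathbf{M}(\lambda)X_{k}$, i.e.\ it is $\mathbf{M}(\lambda)(\mathfrak{n}_{k})$. Hence $\mathrm{rank}(\mathbf{M}_{k}(\lambda))>\mathrm{rank}(\mathbf{M}_{k-1}(\lambda))$ holds exactly when $\mathbf{M}(\lambda)X_{k}\notin\mathbf{M}(\lambda)(\mathfrak{n}_{k-1})$, that is, when the $k$-th column is linearly independent from the previous ones. I would record this reformulation as the key observation.

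Next I would show that $\mathbf{M}(\lambda)X_{k}\notin\mathbf{M}(\lambda)(\mathfrak{n}_{k-1})$ is equivalent to $X_{k}\notin\mathfrak{n}_{k-1}+\ker\mathbf{M}(\lambda)=\mathfrak{n}_{k-1}+\mathfrak{n}(\lambda)$. The forward direction is immediate: if $X_{k}=u+v$ with $u\in\mathfrak{n}_{k-1}$ and $v\in\mathfrak{n}(\lambda)=\ker\mathbf{M}(\lambda)$, then $\mathbf{M}(\lambda)X_{k}=\mathbf{M}(\lambda)u\in\mathbf{M}(\lambda)(\mathfrak{n}_{k-1})$. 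For the converse, if $\mathbf{M}(\lambda)X_{k}=\mathbf{M}(\lambda)u$ for some $u\in\mathfrak{n}_{k-1}$, then $X_{k}-u\in\ker\mathbf{M}(\lambda)=\mathfrak{n}(\lambda)$, whence $X_{k}\in\mathfrak{n}_{k-1}+\mathfrak{n}(\lambda)$. Contrapositively, $\mathbf{M}(\lambda)X_{k}\notin\mathbf{M}(\lambda)(\mathfrak{n}_{k-1})$ iff $X_{k}\notin\mathfrak{n}_{k-1}+\mathfrak{n}(\lambda)$, which is precisely the jump condition $\mathfrak{n}_{k}\nsubseteq\mathfrak{n}_{k-1}+\mathfrak{n}(\lambda)$ (since $\mathfrak{n}_{k}=\mathfrak{n}_{k-1}+\mathbb{R}X_{k}$).

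Chaining these two equivalences gives the claimed set equality. The only genuinely delicate point—really the one step to state carefully rather than a hard obstacle—is the passage between ``the $k$-th column lies in the span of the earlier columns'' and ``$X_{k}\in\mathfrak{n}_{k-1}+\mathfrak{n}(\lambda)$''; this hinges on reading $\mathbf{M}(\lambda)$ as a linear operator whose matrix is taken in the ordered basis $X_{1},\dots,X_{n}$, so that column $j$ is exactly $\mathbf{M}(\lambda)X_{j}$, and on invoking the already-recalled fact $\mathfrak{n}(\lambda)=\ker\mathbf{M}(\lambda)$. With that identification in place the argument is a short exercise in linear algebra, valid for every $\lambda\in\mathfrak{n}^{\ast}$ without any genericity assumption, which matches the unrestricted hypothesis in the statement.
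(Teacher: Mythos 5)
Your proof is correct, and it follows the same basic route the paper intends (translate the rank condition on the columns of $\mathbf{M}(\lambda)$ into a statement about $\mathfrak{n}(\lambda)=\ker\mathbf{M}(\lambda)$), but it is carried out more carefully than the paper's own proof. The paper reduces both implications to the single observation that $X_{i}$ does or does not lie in the null space of $\mathbf{M}(\lambda)$; as literally written this is a non sequitur, since $X_{i}\notin\mathfrak{n}(\lambda)$ by itself neither implies $X_{i}\notin\mathfrak{n}_{i-1}+\mathfrak{n}(\lambda)$ nor implies that the $i$-th column is independent of the preceding ones (it only gives $\mathbf{M}(\lambda)X_{i}\neq0$). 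The step you isolate --- that $\mathbf{M}(\lambda)X_{k}\in\mathbf{M}(\lambda)(\mathfrak{n}_{k-1})$ if and only if $X_{k}\in\mathfrak{n}_{k-1}+\ker\mathbf{M}(\lambda)$, proved by writing $X_{k}=u+v$ with $u\in\mathfrak{n}_{k-1}$ and $v\in\ker\mathbf{M}(\lambda)$ in one direction and by noting $X_{k}-u\in\ker\mathbf{M}(\lambda)$ in the other --- is exactly the bridge the paper's argument omits, and your identification of the rank jump with $\mathbf{M}(\lambda)X_{k}\notin\mathbf{M}(\lambda)(\mathfrak{n}_{k-1})$ (rather than with $\mathbf{M}(\lambda)X_{k}\neq0$) is the correct reformulation. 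In short, the proposal not only matches the paper's approach but supplies the missing step that makes it rigorous.
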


\begin{proof}
First, assume that the rank of $\mathbf{M}_{i}(\lambda)$ is greater than the
rank of $\mathbf{M}_{i-1}(\lambda).$ Then it is clear that $X_{i}$ cannot be
in the null-space of the matrix $\mathbf{M}(\lambda).$ Thus, $\mathfrak{n}%
_{i}=\mathfrak{n}_{i-1}+\mathbb{R}X_{i}\nsubseteq\mathfrak{n}_{i-1}%
+\mathfrak{n}(\lambda).$ Next, if $\mathfrak{n}_{i}\nsubseteq\mathfrak{n}%
_{i-1}+\mathfrak{n}(\lambda)$ and $\mathfrak{n}_{i}=\mathfrak{n}%
_{i-1}+\mathbb{R}X_{i}$ since the basis element $X_{i}$ cannot be in
$\mathfrak{n}(\lambda).$ Thus the rank of $\mathbf{M}_{i}(\lambda)$ is greater
than the rank of $\mathbf{M}_{i-1}(\lambda)$ and the stated result is established.
\end{proof}

It is proved in Theorem $3.1.9,$ \cite{Corwin} that there exist a Zariski open
subset $\Omega$ of $\mathfrak{n}^{\ast}$ and a fixed set $\mathbf{e}%
\subset\left\{  1,2,\cdots,n\right\}  $ such that the map
\[
\lambda\mapsto\left\{  1\leq k\leq n:\mathrm{rank}\left(  \mathbf{M}%
_{k}\left(  \lambda\right)  \right)  >\mathrm{rank}\left(  \mathbf{M}%
_{k-1}\left(  \lambda\right)  \right)  \right\}  =\mathbf{e}%
\]
is constant, $\Omega$ is invariant under the coadjoint action of $N$ and
\[
\Sigma=\left\{  \lambda\in\Omega:\lambda\left(  X_{k}\right)  =0\text{ for all
}k\in\mathbf{e}\text{ }\right\}
\]
is an algebraic set which is a cross-section for the coadjoint orbits of $N$
in $\Omega,$ as well as a parameterizing set for the unitary dual of $N.$

\begin{lemma}
If $\mathfrak{p}$ is a constant polarization for $\mathfrak{n}$ then the set
\[
\left\{  p+1,p+2,\cdots,p+m=n\right\}
\]
is contained in $\mathbf{e}$ and $\mathrm{card}\left(  \mathbf{e}\right)
=2m.$
\end{lemma}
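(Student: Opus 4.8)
The plan is to read the jump indices off the ranks of the leading column-blocks $\mathbf{M}_k(\lambda)$ via the lemma just proved, exploiting the special block form of $\mathbf{M}(\lambda)$. Write $C=C(\lambda)$ for the $p\times m$ matrix with entries $C_{ij}=\lambda[Z_i,A_j]$ and let $c_1,\cdots,c_m\in\mathbb{R}^p$ denote its columns. Since $\mathfrak{p}$ and $\mathfrak{m}$ are commutative, the upper-left $p\times p$ and lower-right $m\times m$ blocks of $\mathbf{M}(\lambda)$ vanish, so
\[
\mathbf{M}(\lambda)=\left[\begin{array}{cc} 0 & C \\ -C^{T} & 0 \end{array}\right].
\]
The left $p$ columns span $\{0\}\times\mathrm{col}(C^{T})$ and the right $m$ columns span $\mathrm{col}(C)\times\{0\}$; as these occupy complementary coordinate subspaces, $\mathrm{rank}\,\mathbf{M}(\lambda)=\mathrm{rank}\,C+\mathrm{rank}\,C^{T}=2\,\mathrm{rank}\,C$. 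Since $\mathfrak{n}(\lambda)=\ker\mathbf{M}(\lambda)$, the coadjoint orbit through $\lambda$ has dimension $n-\dim\mathfrak{n}(\lambda)=\mathrm{rank}\,\mathbf{M}(\lambda)=2\,\mathrm{rank}\,C$, and as noted above $\mathrm{card}(\mathbf{e})$ equals this orbit dimension for $\lambda\in\Omega$. Thus everything reduces to computing $\mathrm{rank}\,C$.

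Next I would invoke the constant-polarization hypothesis. For $\lambda\in\Omega$ one has $\mathfrak{p}=\mathfrak{p}(\lambda)$, so the standard polarization identity $\dim\mathfrak{p}(\lambda)=\tfrac{1}{2}\left(n+\dim\mathfrak{n}(\lambda)\right)$ gives
\[
\dim(\mathfrak{n}/\mathfrak{p}(\lambda))=\tfrac{1}{2}\left(n-\dim\mathfrak{n}(\lambda)\right)=\tfrac{1}{2}\,\mathrm{rank}\,\mathbf{M}(\lambda)=\mathrm{rank}\,C.
\]
On the other hand $\dim(\mathfrak{n}/\mathfrak{p}(\lambda))=\dim(\mathfrak{n}/\mathfrak{p})=n-p=m$, whence $\mathrm{rank}\,C=m$. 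In particular $\mathrm{card}(\mathbf{e})=2\,\mathrm{rank}\,C=2m$, which is the second assertion; and since $C$ is $p\times m$ with $p>m$, the equality $\mathrm{rank}\,C=m$ says $C$ has full column rank, so $c_1,\cdots,c_m$ are linearly independent.

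Finally I would check $\{p+1,\cdots,n\}\subseteq\mathbf{e}$ using the rank criterion of the previous lemma. The first $p$ columns of $\mathbf{M}(\lambda)$ have zero top block and bottom block $-C^{T}$, so $\mathrm{rank}\,\mathbf{M}_{p}(\lambda)=\mathrm{rank}\,C=m$; each further column $p+j$ (for $1\le j\le m$) has top block $c_j$ and zero bottom block. Splitting the column space of $\mathbf{M}_{p+j}(\lambda)$ into its top and bottom coordinate parts gives
\[
\mathrm{rank}\,\mathbf{M}_{p+j}(\lambda)=\dim\mathrm{span}\{c_1,\cdots,c_j\}+\mathrm{rank}\,C=j+m,
\]
the last equality using the linear independence of the $c_j$. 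Hence the rank strictly increases at each of $p+1,\cdots,p+m$, so all these indices belong to $\mathbf{e}$, as claimed.

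The only genuinely substantive step is the identity $\mathrm{rank}\,C=m$, and this is exactly where the hypothesis that $\mathfrak{p}$ is a \emph{constant} polarization enters: it guarantees $\mathfrak{p}=\mathfrak{p}(\lambda)$ on all of $\Omega$ (where also $\mathbf{e}(\lambda)=\mathbf{e}$), so the generic orbit dimension is pinned down by $\dim(\mathfrak{n}/\mathfrak{p})$. The remaining steps are routine block linear algebra; one need only be careful to keep $\lambda$ in $\Omega$ throughout.
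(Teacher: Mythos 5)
Your argument is correct, but it reverses the logical order of the paper's proof and substitutes a different key fact. The paper establishes the containment $\{p+1,\dots,n\}\subseteq\mathbf{e}$ first and by contradiction: if some index $k$ in that range failed to be a jump index, one may permute the commuting $A_j$'s so that $X_k=A_1$, and then the $k$-th column of $\mathbf{M}(\lambda)$ --- which lies in $\mathbb{R}^p\times\{0\}$ while the first $p$ columns lie in $\{0\}\times\mathbb{R}^m$ --- would have to vanish identically; hence $\mathfrak{p}+\mathbb{R}A_1$ would be a strictly larger subalgebra with bracket killed by $\lambda$, contradicting maximality of the polarization. The count $\mathrm{card}(\mathbf{e})=2m$ is then read off (quite tersely) from skew-symmetry of $\mathbf{M}(\lambda)$. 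You go the other way: the polarization dimension identity $\dim\mathfrak{p}(\lambda)=\frac{1}{2}\left(n+\dim\mathfrak{n}(\lambda)\right)$, combined with $\dim(\mathfrak{n}/\mathfrak{p})=m$ and $\mathrm{rank}\,\mathbf{M}(\lambda)=2\,\mathrm{rank}\,C$, pins down $\mathrm{rank}\,C=m$ at once, and full column rank of $C$ then yields the strict rank increase at each of the last $m$ columns directly, with no contradiction argument and no permutation of basis vectors. Both proofs exploit the same block form of $\mathbf{M}(\lambda)$, and both ultimately rest on maximality of $\mathfrak{p}$ --- the paper invokes it explicitly, while you import it packaged inside the standard Corwin--Greenleaf dimension formula for polarizations (a fact the paper itself quotes, so this is fair game). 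Your route is arguably cleaner, and it makes the cardinality assertion the outcome of a transparent computation rather than an afterthought; the one point to keep explicit is that the locus where $\mathfrak{p}$ polarizes $\lambda$ and the generic layer $\Omega_{\mathbf{e}}$ are a priori two different Zariski open sets, so $\lambda$ should be taken in their (nonempty, Zariski open, $N$-invariant) intersection --- which your closing remark about keeping $\lambda$ in $\Omega$ effectively covers.
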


\begin{proof}
Let $\lambda\in\Sigma.$ Let us suppose that there exists $k\in\left\{
p+1,p+2,\cdots,p+m=n\right\}  $ such that $k$ is not an element of the set
$\mathbf{e}\left(  \lambda\right)  =\mathbf{e.}$ Without loss of generality,
since the algebra generated by the $A_{j}$ is commutative, we may assume that
$X_{k}=A_{1}.$ Indeed for any permutation $\sigma\in S_{m},$ $\left\{
Z_{1},\cdots,Z_{p},A_{\sigma(1)},\cdots A_{\sigma(m)}\right\}  $ is a Malcev
basis for $\mathfrak{n}.$ Since $\mathrm{rank}\left(  \mathbf{M}_{k}\left(
\lambda\right)  \right)  =\mathrm{rank}\left(  \mathbf{M}_{k-1}\left(
\lambda\right)  \right)  $ are since the matrices $M_{k-1}(\lambda),$ and
$M_{k}(\lambda)$ are given as shown below
\[
\overset{=\mathbf{M}_{k-1}\left(  \lambda\right)  }{\mathbf{M}_{k}\left(
\lambda\right)  =\overbrace{\left[
\begin{array}
[c]{ccc}%
0 & \cdots & 0\\
\vdots & \ddots & \vdots\\
0 & \cdots & 0\\
\lambda\left[  A_{1},Z_{1}\right]  & \cdots & \lambda\left[  A_{1}%
,Z_{p}\right] \\
\vdots & \ddots & \vdots\\
\lambda\left[  A_{m},Z_{1}\right]  & \cdots & \lambda\left[  A_{m}%
,Z_{p}\right]
\end{array}
\right.  }}\left.
\begin{array}
[c]{c}%
\lambda\left[  Z_{1},A_{1}\right] \\
\vdots\\
\lambda\left[  Z_{p},A_{1}\right] \\
0\\
\vdots\\
0
\end{array}
\right]  ,
\]
it is clear that for all $\lambda\in\Sigma,$ the last column of the matrix
$\mathbf{M}_{k}\left(  \lambda\right)  $ is equal to zero. It follows that
$\mathfrak{p}+\mathbb{R}X_{k}$ is a commutative algebra and $\left[
\mathfrak{p}+\mathbb{R}X_{k},\mathfrak{p}+\mathbb{R}X_{k}\right]  $ is
contained in the kernel of the linear functional $\lambda\in\Sigma;$
contradicting the fact that $\mathfrak{p}$ is a maximal algebra satisfying the
condition that $\left[  \mathfrak{p,p}\right]  \subseteq\ker\lambda.$ The
second part of the lemma is true because $\mathbf{M}(\lambda)$ is a skew
symmetric rank of positive rank.
\end{proof}

\begin{remark}
From now on, we shall assume that $\mathfrak{p}$ is a constant polarization
ideal for $\mathfrak{n}.$ Since $P=\exp\mathfrak{p}$ is normal in $N,$ it is
clear that the dual of the Lie algebra $\mathfrak{p}$ is invariant under the
coadjoint action of the commutative group $M$.
\end{remark}

\begin{remark}
\label{action of P}\text{ Let} $\beta:\Sigma\times\mathbb{R}^{m}%
\rightarrow\mathbb{R}^{p}$ be the mapping defined by
\[
\beta\left(  \lambda,t_{1},\cdots,t_{m}\right)  =\exp\left(  t_{1}A_{1}%
+\cdots+t_{m}A_{m}\right)  \cdot\lambda|\mathfrak{p}^{\ast}%
\]
where $\cdot$ stands for the coadjoint action. In vector-form, $\beta\left(
\lambda,t_{1},\cdots,t_{m}\right)  $ is easily computed as follows. Let
$\mathfrak{P}\left(  A(t)\right)  $ be the transpose of the matrix
representation of $e^{\left.  -ad\left(  \sum_{k=1}^{m}t_{k}A_{k}\right)
\right\vert \mathfrak{p}}$ with respect to the ordered basis $\left\{
Z_{k}:1\leq k\leq p\right\}  .$ We write
\[
\mathfrak{P}\left(  A(t)\right)  =\left[  e^{\left.  -ad\left(  \sum_{k=1}%
^{m}t_{k}A_{k}\right)  \right\vert \mathfrak{p}}\right]  ^{T}%
\]
and
\[
\beta\left(  \lambda,t_{1},\cdots,t_{m}\right)  \equiv\mathfrak{P}\left(
A(t)\right)  \left[
\begin{array}
[c]{c}%
f_{1}\\
\vdots\\
f_{p}%
\end{array}
\right]  \text{ where }\lambda=\sum_{k=1}^{p}f_{k}Z_{k}^{\ast}%
\]
$\left\{  Z_{k}^{\ast}:1\leq k\leq p\right\}  $ is a dual basis to $\left\{
Z_{k}:1\leq k\leq p\right\}  .$ We shall generally make no distinction between
linear functionals and their representations as either row or column vectors.
Secondly for any linear functional $\lambda=\sum_{k=1}^{p}f_{k}Z_{k}^{\ast}%
\in\Sigma$ since $\mathfrak{P}\left(  A(t)\right)  $ is a unipotent matrix,
the components of $\beta\left(  \lambda,t_{1},\cdots,t_{m}\right)  $ are
polynomials in the variables $f_{k}$ where $k\notin\mathbf{e}$ and
$t_{1},\cdots t_{m}.$ 
\end{remark}

\begin{example}
Let us suppose that $\mathfrak{p}$ is spanned with $Z_{1},Z_{2},Z_{3}$ and
$\mathfrak{m}$ is spanned by $A_{1},A_{2}$ such that
\[
\left[  A_{1},Z_{3}\right]  =Z_{1},\left[  A_{2},Z_{2}\right]  =Z_{1},\left[
A_{2},Z_{3}\right]  =Z_{2}.
\]
Then
\[
\left[  ad\left(  t_{1}A_{1}+t_{2}A_{2}\right)  |\mathfrak{p}\right]  =\left[
\begin{array}
[c]{ccc}%
0 & t_{2} & t_{1}\\
0 & 0 & t_{2}\\
0 & 0 & 0
\end{array}
\right]  \text{ and }\mathfrak{P}\left(  A(t)\right)  =\left[
\begin{array}
[c]{ccc}%
1 & 0 & 0\\
-t_{2} & 1 & 0\\
\frac{1}{2}t_{2}^{2}-t_{1} & -t_{2} & 1
\end{array}
\right]  .
\]

\end{example}

\begin{lemma}
\label{equivalency}Let $\lambda\in\Sigma.$ $\mathfrak{p}$ is a polarizing
ideal subordinated to the linear functional $\lambda$ if and only if for any
given $X\in\mathfrak{n}$, $\mathfrak{p}$ is also a polarizing ideal
subordinated to the linear functional $\exp\left(  X\right)  \cdot\lambda.$
\end{lemma}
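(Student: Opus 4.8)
The plan is to realize the coadjoint action as the pullback of the alternating form $B_\lambda(Y,Y')=\lambda([Y,Y'])$ by a Lie-algebra automorphism, and then to transport the two defining properties of a polarizing ideal (isotropy and maximality) along that automorphism. Write $\ell=\exp X\cdot\lambda$ and set $\phi=e^{ad(-X)}=\mathrm{Ad}(\exp(-X))$. By the definition of the coadjoint action in (\ref{coadjoint}) one has $\ell(Y)=\lambda(\phi Y)$ for every $Y\in\mathfrak{n}$, and since $\phi$ is a Lie-algebra automorphism, $B_\ell(Y,Y')=\ell([Y,Y'])=\lambda(\phi[Y,Y'])=\lambda([\phi Y,\phi Y'])=B_\lambda(\phi Y,\phi Y')$. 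The first point I would record is that, because $\mathfrak{p}$ is an ideal, $ad(Z)\mathfrak{p}\subseteq\mathfrak{p}$ for all $Z\in\mathfrak{n}$, so $\phi(\mathfrak{p})=\mathfrak{p}$; consequently $\phi([\mathfrak{p},\mathfrak{p}])=[\phi\mathfrak{p},\phi\mathfrak{p}]=[\mathfrak{p},\mathfrak{p}]$.

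From these two facts the forward implication is immediate. For isotropy, $\ell([\mathfrak{p},\mathfrak{p}])=\lambda(\phi[\mathfrak{p},\mathfrak{p}])=\lambda([\mathfrak{p},\mathfrak{p}])=0$ since $\mathfrak{p}$ is subordinated to $\lambda$, so $[\mathfrak{p},\mathfrak{p}]\subseteq\ker\ell$. For maximality, suppose $\mathfrak{q}$ is a subalgebra with $\mathfrak{p}\subseteq\mathfrak{q}$ and $[\mathfrak{q},\mathfrak{q}]\subseteq\ker\ell$. Then $\phi(\mathfrak{q})$ is again a subalgebra, it contains $\phi(\mathfrak{p})=\mathfrak{p}$, and $\lambda([\phi\mathfrak{q},\phi\mathfrak{q}])=\lambda(\phi[\mathfrak{q},\mathfrak{q}])=\ell([\mathfrak{q},\mathfrak{q}])=0$, so $\phi(\mathfrak{q})$ is a subalgebra subordinated to $\lambda$ that contains $\mathfrak{p}$. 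Maximality of $\mathfrak{p}$ at $\lambda$ forces $\phi(\mathfrak{q})=\mathfrak{p}$, and applying $\phi^{-1}$ gives $\mathfrak{q}=\mathfrak{p}$. Hence $\mathfrak{p}$ is a maximal subordinated subalgebra for $\ell$, i.e. a polarizing ideal subordinated to $\ell$; the ideal property of $\mathfrak{p}$ is intrinsic to the bracket and independent of the functional, so nothing extra is needed there.

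For the converse I would simply invoke that the coadjoint action is a genuine group action: if $\mathfrak{p}$ is a polarizing ideal subordinated to $\mu:=\exp X\cdot\lambda$, then $\lambda=\exp(-X)\cdot\mu$, and the forward implication applied to the functional $\mu$ with the element $-X$ yields that $\mathfrak{p}$ is a polarizing ideal subordinated to $\lambda$. I do not expect a genuine obstacle here; the only points requiring care are the bookkeeping that $\phi$ is an automorphism fixing $\mathfrak{p}$ setwise (so both the subalgebra property and isotropy are preserved under $\phi$ and $\phi^{-1}$) and that $\mathfrak{q}\mapsto\phi(\mathfrak{q})$ is an inclusion-preserving bijection between subalgebras containing $\mathfrak{p}$, which is precisely what promotes ``isotropic subalgebra containing $\mathfrak{p}$'' at one functional into ``maximal such'' at the other.
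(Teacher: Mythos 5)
Your proof is correct and follows essentially the same route as the paper: the key point in both is that $\mathfrak{p}$ being an ideal forces $e^{\mathrm{ad}X}\mathfrak{p}=\mathfrak{p}$, so the polarization property transports along the coadjoint action. The only difference is that the paper outsources the transport step to Proposition $1.3.6$ of \cite{Corwin} (which states that $\mathrm{Ad}^{*}(g)$ carries polarizations at $\lambda$ to polarizations at $g\cdot\lambda$), whereas you verify the isotropy and maximality statements directly via the automorphism $\phi=e^{\mathrm{ad}(-X)}$ --- a harmless and self-contained expansion of the same argument.
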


\begin{proof}
Appealing to Proposition $1.3.6$ in \cite{Corwin}, $\mathfrak{p}$ is a
polarizing algebra subordinated to $\lambda$ if and only if $e^{adX}%
\mathfrak{p}=\mathfrak{p}$ is a polarizing algebra subordinated to
$\exp\left(  X\right)  \cdot\lambda.$
\end{proof}

\begin{lemma}
\label{beta 1}If for each $\lambda\in\Sigma,$ $\mathfrak{p}=\mathbb{R}$-span
$\left\{  Z_{1},\cdots,Z_{p}\right\}  $ is a commutative polarizing ideal
which is subordinated to the linear functional $\lambda$ then $\beta$ defines
a diffeomorphism between $\Sigma\times\mathbb{R}^{m}$ and its range.
\end{lemma}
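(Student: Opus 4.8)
The plan is to show that $\beta$ is a smooth injective immersion between the two manifolds $\Sigma\times\mathbb{R}^{m}$ and $\mathbb{R}^{p}$, both of dimension $p$ (recall $\dim\Sigma=n-2m=p-m$). An injective immersion between manifolds of equal dimension is, by the inverse function theorem, a local diffeomorphism and an open map; being in addition injective it is a homeomorphism onto its open image, hence a diffeomorphism onto its range. Smoothness is immediate from Remark \ref{action of P}, where $\beta(\lambda,t)=\mathfrak{P}(A(t))f$ is displayed as a map whose components are polynomials. Thus the real content lies in two places: injectivity, and the non-vanishing of the Jacobian of $\beta$.

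Both would hinge on one observation. For $\lambda\in\Sigma$ and any $t$, set $\Phi(\lambda,t)=\exp(\sum_{k}t_{k}A_{k})\cdot\lambda\in\mathfrak{n}^{\ast}$, so that $\beta(\lambda,t)=\Phi(\lambda,t)|_{\mathfrak{p}}$. Since $\mathfrak{m}$ is commutative, $e^{-ad(\sum t_{k}A_{k})}$ fixes each $A_{j}$, and since $\{p+1,\dots,n\}\subseteq\mathbf{e}$ gives $\lambda(A_{j})=0$ on the cross-section, we get $\Phi(\lambda,t)(A_{j})=0$ for every $j$. Hence $\Phi(\lambda,t)$ is completely recovered from $\beta(\lambda,t)$, and $\beta$ is injective precisely when $\Phi$ is. To prove $\Phi$ injective I would use that $\Sigma$ is a cross-section for the coadjoint orbits: if $\Phi(\lambda,t)=\Phi(\lambda',t')$ then $\lambda$ and $\lambda'$ lie on a common orbit, forcing $\lambda=\lambda'$, after which $\exp(\sum(t_{k}-t'_{k})A_{k})$ stabilizes $\lambda$. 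Because the stabilizer of $\lambda$ is $\exp(\mathfrak{n}(\lambda))$, this reduces to the transversality statement $\mathfrak{m}\cap\mathfrak{n}(\lambda)=\{0\}$.

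For the Jacobian I would first remove the dependence on $t$ by coadjoint equivariance. Writing $g=\exp(\sum t_{k}A_{k})$ and letting $\mathrm{Ad}^{\ast}(g):=g\cdot(\cdot)$ denote the coadjoint action, linearity in $\lambda$ gives $\partial_{f_{k}}\Phi=\mathrm{Ad}^{\ast}(g)Z_{k}^{\ast}$, while $\mathrm{Ad}(g)A_{j}=A_{j}$ (again by commutativity of $\mathfrak{m}$) together with the fact that $g$ commutes with $\exp(sA_{j})$ gives $\partial_{t_{j}}\Phi=\mathrm{Ad}^{\ast}(g)\,\partial_{t_{j}}\Phi|_{t=0}$. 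As $\mathrm{Ad}^{\ast}(g)$ is invertible, $D\beta$ is nonsingular at $(\lambda,t)$ iff it is at $(\lambda,0)$. At $t=0$ the claim becomes $T_{\lambda}\Sigma\oplus\mathrm{span}\{\,Z\mapsto\lambda([A_{j},Z])\,\}_{j=1}^{m}=\mathfrak{p}^{\ast}$. Now $T_{\lambda}\Sigma=\mathrm{span}\{Z_{k}^{\ast}:k\notin\mathbf{e}\}$, so the complementary directions are indexed by $\{d_{1}<\dots<d_{m}\}=\mathbf{e}\cap\{1,\dots,p\}$; since the coefficient of $Z_{d_{i}}^{\ast}$ in the $j$-th extra functional equals $\lambda([A_{j},Z_{d_{i}}])$, these $m$ functionals are complementary to $T_{\lambda}\Sigma$ exactly when $C=[\lambda([Z_{d_{i}},A_{j}])]_{1\le i,j\le m}$ is invertible.

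The crux — and the step I expect to be the main obstacle — is therefore the invertibility of $C$, and I would deduce it from the invertibility of the skew-symmetric matrix $B(\lambda)$ of (\ref{B}). Ordering $\mathbf{e}=\{d_{1}<\dots<d_{m}<p+1<\dots<n\}$ and using that both $\mathfrak{p}$ and $\mathfrak{m}$ are commutative, the $Z$–$Z$ and $A$–$A$ blocks of $B(\lambda)$ vanish, so $B(\lambda)$ is block anti-diagonal with off-diagonal block $C$ and $\det B(\lambda)=(\det C)^{2}$. Since $B(\lambda)$ has rank $2m$, $C$ is invertible. This single fact does double duty: non-degeneracy of the pairing $(A_{j},Z_{d_{i}})\mapsto\lambda([A_{j},Z_{d_{i}}])$ yields $\mathfrak{m}\cap\mathfrak{n}(\lambda)=\{0\}$ for the injectivity argument, and it furnishes the transversality needed for the Jacobian. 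Injectivity together with the everywhere-nonsingular Jacobian then exhibits $\beta$ as a diffeomorphism onto its open range, completing the proof.
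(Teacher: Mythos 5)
Your proof is correct, but it takes a genuinely different route from the paper's. The paper does not argue directly on $\beta$: it introduces the full orbit-parametrization map $\beta^{\mathbf{e}}(\lambda,t_{\mathbf{e}_{1}},\dots,t_{\mathbf{e}_{2m}})=\exp(t_{\mathbf{e}_{1}}X_{\mathbf{e}_{1}})\cdots\exp(t_{\mathbf{e}_{2m}}X_{\mathbf{e}_{2m}})\cdot\lambda$, invokes the Corwin--Greenleaf fact that this is a bijection of $\Sigma\times\mathbb{R}^{2m}$ onto $\Omega$ with constant full rank, and then observes that after writing $g\cdot\lambda=\bigl(\mathfrak{P}(A(s))f,\ \sigma(t,\mathfrak{P}(A(s))f)\bigr)$ the Jacobian of $\beta^{\mathbf{e}}$ is block lower-triangular with the Jacobian of $\beta$ as its upper-left $p\times p$ block; nonvanishing of $\det A(\ell,s)$ and injectivity of $\beta$ are then both inherited from the corresponding properties of $\beta^{\mathbf{e}}$. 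You instead work with $\beta$ itself: injectivity via the cross-section property plus the stabilizer identity $\mathrm{Stab}(\lambda)=\exp(\mathfrak{n}(\lambda))$, and nonsingularity of the Jacobian via equivariance (reduction to $t=0$) and the explicit pairing matrix $C=[\lambda([Z_{d_{i}},A_{j}])]$, whose invertibility you extract from $|\det B(\lambda)|=(\det C)^{2}$ and the rank-$2m$ property of $B(\lambda)$. Both arguments ultimately rest on the same two facts (the cross-section property of $\Sigma$ and the nondegeneracy of $B(\lambda)$), but yours is more self-contained and makes explicit something the paper leaves implicit, namely that the single invertible matrix $C$ simultaneously gives $\mathfrak{m}\cap\mathfrak{n}(\lambda)=\{0\}$ and the transversality of the orbit directions to $T_{\lambda}\Sigma$; it also yields the identity $|\det J_{\beta}(\lambda,0)|=|\det B(\lambda)|^{1/2}$, which the paper only recovers later when matching the Plancherel measure in Lemma \ref{Plancherel measure}. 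The paper's route is shorter because it outsources the hard work to the standard orbit-parametrization theorem. I see no gap in your argument; the only points worth writing out in full are the standard facts you quote (connectedness of coadjoint stabilizers in the simply connected nilpotent setting, and that an injective local diffeomorphism is a diffeomorphism onto its open image), both of which are legitimate.
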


\begin{proof}
In order to prove this result, it is enough to show that $\beta$ is a
bijective smooth map with constant full rank (see Theorem $6.5,$ \cite{Lee}).
In order to establish this fact, we will need to derive a precise formula for
the coadjoint action. Let us define $\beta^{\mathbf{e}}:\Sigma\times
\mathbb{R}^{2m}$ such that
\[
\beta^{\mathbf{e}}\left(  \lambda,t_{\mathbf{e}_{1}},\cdots,t_{\mathbf{e}%
_{2m}}\right)  =\exp\left(  t_{\mathbf{e}_{1}}X_{\mathbf{e}_{1}}\right)
\cdots\exp\left(  t_{\mathbf{e}_{2m}}X_{\mathbf{e}_{2m}}\right)  \cdot\lambda
\]
and $\left\{  \mathbf{e}_{1}<\mathbf{e}_{2}<\cdots<\mathbf{e}_{2m}\right\}
=\mathbf{e.}$ For a fixed linear functional $\lambda$ in the cross-section
$\Sigma,$ the map
\[
\left(  t_{\mathbf{e}_{1}},\cdots,t_{\mathbf{e}_{2m}}\right)  \mapsto
\exp\left(  t_{\mathbf{e}_{1}}X_{\mathbf{e}_{1}}\right)  \cdots\exp\left(
t_{\mathbf{e}_{2m}}X_{\mathbf{e}_{2m}}\right)  \cdot\lambda
\]
defines a diffeomorphism between $\mathbb{R}^{2m}$ and the $N$-orbit of
$\lambda$ which is a closed submanifold of the dual of the Lie algebra
$\mathfrak{n.}$ Next, since all orbits in $\Omega$ are $2m$-dimensional
manifolds, the map $\beta^{\mathbf{e}}$ is a bijection with constant
full-rank. Thus, $\beta^{\mathbf{e}}$ defines a diffeomorphism between
$\Sigma\times\mathbb{R}^{2m}$ and $\Omega$ . Next, there exist indices
$i_{1},\cdots,i_{m}\leq p$ such that for $g=\exp\left(  t_{\mathbf{e}_{1}%
}X_{\mathbf{e}_{1}}\right)  \cdots\exp\left(  t_{\mathbf{e}_{2m}}%
X_{\mathbf{e}_{2m}}\right)  $ we have
\[
g\cdot\lambda=\exp\left(  \sum_{k=1}^{m}t_{i_{k}}Z_{i_{k}}\right)  \exp\left(
\sum_{j=1}^{m}s_{j}A_{j}\right)  \cdot\lambda.
\]
In order to compute the coadjoint action of $N$ on the linear functional
$\lambda,$ it is quite convenient to identify $\mathfrak{n}^{\ast}$ with
$\mathfrak{p}^{\ast}\times\mathfrak{m}^{\ast}$ via the map
\[
\iota:\mathfrak{n}^{\ast}=\mathfrak{p}^{\ast}+\mathfrak{m}^{\ast}%
\rightarrow\mathfrak{p}^{\ast}\times\mathfrak{m}^{\ast}%
\]
which is defined as follows:
\[
\iota\left(  f_{1}+f_{2}\right)  =\left[
\begin{array}
[c]{c}%
f_{1}\\
f_{2}%
\end{array}
\right]  \text{ where }f_{1}\in\mathfrak{p}^{\ast}\text{ and }f_{2}%
\in\mathfrak{m}^{\ast}\mathfrak{.}%
\]
Thus, for any linear functional $\lambda\in\Sigma,$
\[
\iota\left(  \lambda\right)  =\left[
\begin{array}
[c]{c}%
f\\
0
\end{array}
\right]
\]
for some $f\in\mathfrak{p}.$ Put
\[
A\left(  s\right)  =\sum_{j=1}^{m}\left(  s_{j}A_{j}\right)  \in
\mathfrak{m}\text{, }Z\left(  t\right)  =\sum_{k=1}^{m}\left(  t_{i_{k}%
}Z_{i_{k}}\right)  \in\mathfrak{p}%
\]
where $s=\left(  s_{1},\cdots,s_{m}\right)  ,t=\left(  t_{i_{1}}%
,\cdots,t_{i_{m}}\right)  ,$ and let $\left[  e^{-adA\left(  s\right)
|\mathfrak{p}}\right]  $ be the matrix representation of the linear map
$e^{-adA\left(  s\right)  |\mathfrak{p}}$ which is obtained by exponentiating
$-adA\left(  s\right)  $ restricted to the vector space $\mathfrak{p}.$
Clearly, with the fixed choice of the Malcev basis described in (\ref{Malcev}%
), it is easy to check that (see Remark \ref{action of P})
\begin{equation}
\iota\left(  \exp\left(  A\left(  s\right)  \right)  \cdot\lambda\right)
=\left[
\begin{array}
[c]{c}%
\mathfrak{P}\left(  A(s)\right)  f\\
0
\end{array}
\right]  \text{ }\label{comp1}%
\end{equation}
and
\begin{equation}
\iota\left(  \exp\left(  Z\left(  t\right)  \right)  \cdot\lambda\right)
=\left[
\begin{array}
[c]{c}%
f\\
\sigma\left(  t,f\right)
\end{array}
\right]  \label{comp2}%
\end{equation}
where $\left(  t_{i_{1}},\cdots,t_{i_{m}}\right)  \mapsto\sigma\left(
t_{i_{1}},\cdots,t_{i_{m}},f\right)  $ is an $m\times1$ vector-valued
function. Putting (\ref{comp1}) and (\ref{comp2}) together,
\[
\exp\left(  Z\left(  t\right)  \right)  \exp\left(  A\left(  s\right)
\right)  \cdot\lambda=\left[
\begin{array}
[c]{c}%
\mathfrak{P}\left(  A(s)\right)  f\\
\sigma\left(  t,\mathfrak{P}\left(  A(s)\right)  f\right)
\end{array}
\right]  .
\]
In order to compute the Jacobian of the map
\begin{equation}
\left(  f,s_{1},\cdots,s_{m},t_{i_{1}},\cdots,t_{i_{m}}\right)  \mapsto\left[
\begin{array}
[c]{c}%
\mathfrak{P}\left(  A(s)\right)  f\\
\sigma\left(  t,\mathfrak{P}\left(  A(s)\right)  f\right)
\end{array}
\right]  \label{betaprime}%
\end{equation}
at the point $\left(  f,s,t\right)  ,$ we set
\[
\left[
\begin{array}
[c]{c}%
\mathfrak{P}\left(  A(s)\right)  f\\
\sigma\left(  t,\mathfrak{P}\left(  A(s)\right)  f\right)
\end{array}
\right]  =\left[
\begin{array}
[c]{c}%
\beta_{1}^{\mathbf{e}}\left(  f,s\right)  \\
\vdots\\
\beta_{p}^{\mathbf{e}}\left(  f,s\right)  \\
\beta_{p+1}^{\mathbf{e}}\left(  f,s,t\right)  \\
\vdots\\
\beta_{n}^{\mathbf{e}}\left(  f,s,t\right)
\end{array}
\right]
\]
where
\[
\mathfrak{P}\left(  A(s)\right)  f=\left[
\begin{array}
[c]{c}%
\beta_{1}^{\mathbf{e}}\left(  f,s\right)  \\
\vdots\\
\beta_{p}^{\mathbf{e}}\left(  f,s\right)
\end{array}
\right]  \text{ and }\sigma\left(  t,\mathfrak{P}\left(  A(s)\right)
f\right)  =\left[
\begin{array}
[c]{c}%
\beta_{p+1}^{\mathbf{e}}\left(  f,s,t\right)  \\
\vdots\\
\beta_{n}^{\mathbf{e}}\left(  f,s,t\right)
\end{array}
\right]  .
\]
Now, let $\psi:\Sigma\times\mathbb{R}^{2m}\rightarrow\mathbb{R}^{\dim\Sigma
}\times\mathbb{R}^{2m}$ such that
\[
\left(  \lambda,z\right)  \mapsto\psi\left(  \left(  \lambda,z\right)
\right)  =\left(  \left(  \ell_{1},\cdots,\ell_{\dim\Sigma}\right)  ,z\right)
\text{ and }\ell=\left(  \ell_{1},\cdots,\ell_{\dim\Sigma}\right)  .
\]
Then the pair $\left(  \Sigma\times\mathbb{R}^{2m},\psi\right)  $ is a smooth
chart around the point $\left(  \lambda,z\right)  .$ Computing the Jacobian of
(\ref{betaprime}) in local coordinates, we obtain the following matrix%
\begin{equation}%
\begin{array}
[c]{c}%
\left[
\begin{array}
[c]{c}%
A\left(  \ell,s\right)  \\
C\left(  \ell,s,t\right)
\end{array}%
\begin{array}
[c]{c}%
\\
D\left(  \ell,s,t\right)
\end{array}
\right]
\end{array}
\label{BigJac}%
\end{equation}
where $\left(  \ell,s\right)  \mapsto A\left(  \ell,s\right)  $ is a
matrix-valued function of order $p$ given by
\[
\left[
\begin{array}
[c]{cccccc}%
\dfrac{\partial\left(  \beta_{1}^{\mathbf{e}}\psi^{-1}\left(  \ell,s\right)
\right)  }{\partial\ell_{1}} & \cdots & \dfrac{\partial\left(  \beta
_{1}^{\mathbf{e}}\psi^{-1}\left(  \ell,s\right)  \right)  }{\partial\ell
_{\dim\Sigma}} & \dfrac{\partial\left(  \beta_{1}^{\mathbf{e}}\psi^{-1}\left(
\ell,s\right)  \right)  }{\partial s_{1}} & \cdots & \dfrac{\partial\left(
\beta_{1}^{\mathbf{e}}\psi^{-1}\left(  \ell,s\right)  \right)  }{\partial
s_{m}}\\
\vdots & \ddots & \vdots & \vdots & \ddots & \vdots\\
\dfrac{\partial\left(  \beta_{p}^{\mathbf{e}}\psi^{-1}\left(  \ell,s\right)
\right)  }{\partial\ell_{1}} & \cdots & \dfrac{\partial\left(  \beta
_{p}^{\mathbf{e}}\psi^{-1}\left(  \ell,s\right)  \right)  }{\partial\ell
_{\dim\Sigma}} & \dfrac{\partial\left(  \beta_{p}^{\mathbf{e}}\psi^{-1}\left(
\ell,s\right)  \right)  }{\partial s_{1}} & \cdots & \dfrac{\partial\left(
\beta_{p}^{\mathbf{e}}\psi^{-1}\left(  \ell,s\right)  \right)  }{\partial
s_{m}}%
\end{array}
\right]  ;
\]
$\left(  \ell,s,t\right)  \mapsto C\left(  \ell,s,t\right)  $ is a $m\times p$
matrix-valued function which is equal to
\[
\left[
\begin{array}
[c]{ccccc}%
\dfrac{\partial\left(  \beta_{p+1}^{\mathbf{e}}\psi^{-1}\left(  \ell
,s,t\right)  \right)  }{\partial\ell_{1}} & \cdots & \dfrac{\partial\left(
\beta_{p+1}^{\mathbf{e}}\psi^{-1}\left(  \ell,s,t\right)  \right)  }{\partial
s_{1}} & \cdots & \dfrac{\partial\left(  \beta_{p+1}^{\mathbf{e}}\psi
^{-1}\left(  \ell,s,t\right)  \right)  }{\partial s_{m}}\\
\vdots & \ddots & \vdots & \ddots & \vdots\\
\dfrac{\partial\left(  \beta_{n}^{\mathbf{e}}\psi^{-1}\left(  \ell,s,t\right)
\right)  }{\partial\ell_{1}} & \cdots & \dfrac{\partial\left(  \beta
_{n}^{\mathbf{e}}\psi^{-1}\left(  \ell,s,t\right)  \right)  }{\partial s_{1}}
& \cdots & \dfrac{\partial\left(  \beta_{n}^{\mathbf{e}}\psi^{-1}\left(
\ell,s,t\right)  \right)  }{\partial s_{m}}%
\end{array}
\right]
\]
and finally
\[
D\left(  \ell,s,t\right)  =\left[
\begin{array}
[c]{ccc}%
\dfrac{\partial\left(  \beta_{p+1}^{\mathbf{e}}\psi^{-1}\left(  \ell
,s,t\right)  \right)  }{\partial t_{1}} & \cdots & \dfrac{\partial\left(
\beta_{p+1}^{\mathbf{e}}\psi^{-1}\left(  \ell,s,t\right)  \right)  }{\partial
t_{m}}\\
\vdots & \ddots & \vdots\\
\dfrac{\partial\left(  \beta_{n}^{\mathbf{e}}\psi^{-1}\left(  \ell,s,t\right)
\right)  }{\partial t_{1}} & \cdots & \dfrac{\partial\left(  \beta
_{n}^{\mathbf{e}}\psi^{-1}\left(  \ell,s,t\right)  \right)  }{\partial t_{m}}%
\end{array}
\right]
\]
is of order $m.$ Now, since
\[
\det\left[
\begin{array}
[c]{c}%
A\left(  \ell,s\right)  \\
C\left(  \ell,s,t\right)
\end{array}%
\begin{array}
[c]{c}%
\\
D\left(  \ell,s,t\right)
\end{array}
\right]  =\det A\left(  \ell,s\right)  \times\det D\left(  \ell,s,t\right)
\neq0
\]
and because the Jacobian of $\beta$ is the submatrix $A\left(  \ell,s\right)
,$ it follows that its determinant does not vanish. Thus, the Jacobian of the
map $\beta$ has constant full rank as well. In order to establish that is a
diffeomorphism, we appeal to the fact that
\[
\iota\left(  \beta\left(  \lambda,s_{1},\cdots,s_{m}\right)  \right)
=\mathfrak{P}\left(  A(s)\right)  f
\]
and
\[
\left(  f,s_{1},\cdots,s_{m},t_{i_{1}},\cdots,t_{i_{m}}\right)  \mapsto\left[
\begin{array}
[c]{c}%
\mathfrak{P}\left(  A(s)\right)  f\\
\sigma\left(  t,\mathfrak{P}\left(  A(s)\right)  f\right)
\end{array}
\right]
\]
is a bijection. Thus it is clear that $\beta$ is smooth bijective with
constant full-rank. Thus, it is a diffeomorphism. This completes the proof.
\end{proof}

\begin{remark}
For $\lambda\in\Sigma$ there exist real numbers $f_{k}$ such that
$\lambda=\sum_{k\notin\mathbf{e}}f_{k}Z_{k}^{\ast}=f_{k_{1}}Z_{k_{1}}^{\ast
}+\cdots+f_{k_{n-2m}}Z_{k_{n-2m}}^{\ast}.$ Thus, defining
\[
\mathfrak{j}:f_{k_{1}}Z_{k_{1}}^{\ast}+\cdots+f_{k_{n-2m}}Z_{k_{n-2m}}^{\ast
}\mapsto\left(  f_{k_{1}},\cdots,f_{k_{n-2m}}\right)  ,
\]
it is clear that the unitary dual of the Lie group $N$ is parametrized by
$\mathfrak{j}\left(  \Sigma\right)  $ which is a Zariski open subset of
$\mathbb{R}^{\dim\Sigma}.$ Although this is an abuse of notation, we shall
make no distinction between $\mathfrak{j}\left(  \Sigma\right)  $ and
$\Sigma.$
\end{remark}

An example is now in order.

\begin{example}
\label{Ex}Let $\mathfrak{n}$ be a nilpotent Lie algebra spanned by
\[
Z_{1}=X_{1},Z_{2}=X_{2},Z_{4}=X_{3},A_{1}=X_{4}%
\]
with non-trivial Lie brackets%
\[
\left[  X_{4},X_{2}\right]  =2X_{1}\text{ and }\left[  X_{4},X_{3}\right]
=2X_{2}.
\]
Letting
\[
\lambda=\lambda_{1}X_{1}^{\ast}+\lambda_{2}X_{2}^{\ast}+\lambda_{3}X_{3}%
^{\ast}+\alpha X_{4}^{\ast}\in\mathfrak{n}^{\ast},
\]
the skew symmetric matrix-valued function $\mathbf{M}$ is computed as
follows:
\[
\mathbf{M}\left(  \lambda\right)  =\left[
\begin{array}
[c]{cccc}%
0 & 0 & 0 & 0\\
0 & 0 & 0 & -2\lambda\left(  X_{1}\right)  \\
0 & 0 & 0 & -2\lambda\left(  X_{2}\right)  \\
0 & 2\lambda\left(  X_{1}\right)   & 2\lambda\left(  X_{2}\right)   & 0
\end{array}
\right]  =\left[
\begin{array}
[c]{cccc}%
0 & 0 & 0 & 0\\
0 & 0 & 0 & -2\lambda_{1}\\
0 & 0 & 0 & -2\lambda_{2}\\
0 & 2\lambda_{1} & 2\lambda_{2} & 0
\end{array}
\right]  .
\]
We check that
\[
\lambda\mapsto\mathbf{e}\left(  \lambda\right)  =\left\{
\begin{array}
[c]{c}%
\left\{  2,4\right\}  \text{ if }\lambda\left(  X_{1}\right)  \neq0\\
\left\{  3,4\right\}  \text{ if }\lambda\left(  X_{1}\right)  =0\text{ and
}\lambda\left(  X_{2}\right)  \neq0\\
\left\{  {}\right\}  \text{ if }\lambda\left(  X_{1}\right)  =0\text{ and
}\lambda\left(  X_{2}\right)  =0
\end{array}
\right.  .
\]
Next,
\[
\Omega=\Omega_{\left\{  2,4\right\}  }=\left\{  \lambda=\lambda_{1}X_{1}%
^{\ast}+\lambda_{2}X_{2}^{\ast}+\lambda_{3}X_{3}^{\ast}+\alpha X_{4}^{\ast}%
\in\mathfrak{n}^{\ast}:\lambda_{1}\neq0\right\}
\]
is Zariski open, $N$-invariant and
\[
\Sigma=\left\{  \lambda\in\Omega_{\left\{  2,4\right\}  }:\lambda\left(
X_{2}\right)  =\lambda\left(  X_{4}\right)  =0\right\}  =\left(
\mathbb{R-}\left\{  0\right\}  \right)  \times\mathbb{R}%
\]
is a cross-section for the coadjoint orbits in $\Omega_{\left\{  2,4\right\}
}.$ We note that for every linear functional $\lambda\in\Sigma,$ the ideal
\[
\mathfrak{p}=\mathbb{R}X_{1}+\mathbb{R}X_{2}+\mathbb{R}X_{3}%
\]
is a polarizing algebra subordinated to $\lambda.$ Next,
\[
\mathfrak{P}\left(  A(t)\right)  =\left[
\begin{array}
[c]{ccc}%
1 & 0 & 0\\
-t & 1 & 0\\
\frac{1}{2}t^{2} & -t & 1
\end{array}
\right]
\]
and
\[
\beta\left(  \lambda,t\right)  =\beta\left(  \lambda_{1},\lambda_{3},t\right)
=\left(  \lambda_{1}\right)  X_{1}^{\ast}-\left(  2t\lambda_{1}\right)
X_{2}^{\ast}+\left(  2\lambda_{1}t^{2}+\lambda_{3}\right)  X_{3}^{\ast
}=\left(  \lambda_{1},-2t\lambda_{1},2\lambda_{1}t^{2}+\lambda_{3}\right)
\]
defines a diffeomorphism between the sets $\Sigma\times\mathbb{R}$ and
$\Omega_{\left\{  2,4\right\}  }.$ Additionally, it is clear that
$\beta\left(  \Sigma\times\mathbb{R}\right)  $ is a Zariski open subset of
$\mathbb{R}^{3}\ $and
\[
\beta\left(  \Sigma\times\mathbb{R}\right)  =\left(  \mathbb{R-}\left\{
0\right\}  \right)  \times\mathbb{R\times R}\text{.}%
\]

\end{example}

\begin{remark}
For each $\lambda\in\Sigma,$ the corresponding unitary irreducible
representation $\sigma_{\lambda}$ of $N$ (see (\ref{irreducible})) is obtained
by inducing the character $\chi_{\lambda}$ of the normal subgroup $P$ which is
defined as follows:
\[
\chi_{\lambda}\left(  \exp\left(  t_{1}Z_{1}+\cdots+t_{p}Z_{p}\right)
\right)  =e^{2\pi i\left\langle \lambda,t_{1}Z_{1}+\cdots+t_{p}Z_{p}%
\right\rangle }=e^{2\pi i\lambda\left(  t_{1}Z_{1}+\cdots+t_{p}Z_{p}\right)
}.
\]
Since $\exp\left(  \mathbb{R}A_{1}+\cdots+\mathbb{R}A_{m}\right)  $ is a
cross-section for $N/P$ in $N,$ we shall realize the unitary representation
$\sigma_{\lambda}$ as acting on the Hilbert space $L^{2}(N/P)=L^{2}\left(
\mathbb{R}^{m}\right)  $. Following the discussion in Subsection
\ref{realization}, it is easy to see that if $x=\left(  x_{1},\cdots
,x_{m}\right)  ,$
\[
A\left(  a\right)  =a_{1}A_{1}+\cdots+a_{m}A_{m},\text{ }Z\left(  t\right)
=t_{1}Z_{1}+\cdots+t_{p}Z_{p}%
\]
and $h\in L^{2}\left(  \mathbb{R}^{m}\right)  ,$ then for every linear
functional $\lambda\in\Sigma$
\begin{equation}
\left[  \sigma_{\lambda}\left(  \exp\left(  Z\left(  t\right)  \right)
\exp\left(  A\left(  a\right)  \right)  \right)  h\right]  \left(  x\right)
=e^{2\pi i\left\langle \lambda,e^{-adA\left(  x\right)  }Z\left(  t\right)
\right\rangle }h\left(  x_{1}-a_{1},\cdots,x_{m}-a_{m}\right)  .
\label{irreducibles}%
\end{equation}

\end{remark}

We remark that although the Plancherel measure for an arbitrary nilpotent Lie
group has already been computed in general form in the book of Corwin and
Greenleaf \cite{Corwin}, in order to prove the main results stated in the
introduction, we will need to establish a connection between the Plancherel
measure of $N$ and the determinant of the Jacobian of the map $\beta.$ To make
this connection as transparent and as clear as possible, we shall need the
following lemma.

\begin{lemma}
\label{Plancherel measure}Let $J_{\beta}\left(  \lambda,s_{1},\cdots
,s_{m}\right)  $ be the Jacobian of the smooth map $\beta$ defined in
(\ref{Malcev}). The Plancherel measure of $N$ is up to multiplication by a
constant equal to%
\begin{equation}
d\mu\left(  \lambda\right)  =\left\vert \det J_{\beta}\left(  \lambda
,0\right)  \right\vert d\lambda\label{Planch}%
\end{equation}
where $d\lambda$ is the Lebesgue measure on $\mathbb{R}^{\dim\Sigma}.$
\end{lemma}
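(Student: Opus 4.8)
The plan is to reduce the statement to an explicit evaluation of the Jacobian of $\beta$ at $s=0$ and a comparison with the skew-symmetric matrix $B(\lambda)$ of (\ref{B}). Indeed, by Section $4.3$ of \cite{Corwin} the Plancherel measure is, up to a constant, $|\det B(\lambda)|^{1/2}\,d\lambda$, so it suffices to prove that $|\det J_{\beta}(\lambda,0)| = |\det B(\lambda)|^{1/2}$ for every $\lambda\in\Sigma$. The whole argument is then a determinant computation exploiting the closed form for $\beta$ from Remark \ref{action of P} together with the two commutativity hypotheses in Condition \ref{cond}.

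First I would compute $J_{\beta}(\lambda,0)$ directly from $\beta(\lambda,s)=\mathfrak{P}(A(s))f$, where $\lambda=\sum_{k\le p}f_{k}Z_{k}^{\ast}$ and $\mathfrak{P}(A(s))=[e^{-\mathrm{ad}(A(s))\vert\mathfrak{p}}]^{T}$. Partition $\{1,\dots,p\}$ into the set $F$ of free indices $k\le p$ with $k\notin\mathbf{e}$ (these are the $\dim\Sigma=p-m$ coordinates parametrizing $\Sigma$) and the set $I=\{i_{1},\dots,i_{m}\}$ of jump indices that are $\le p$. Since $\mathfrak{P}(A(0))=\mathrm{Id}$, differentiating $\beta(\lambda,0)=f$ in the free directions $\ell_{r}=f_{k_{r}}$ returns the corresponding standard basis vectors, while $f_{k}=0$ for $k\in I$; differentiating in the direction $s_{j}$ and using $\tfrac{\partial}{\partial s_{j}}\big|_{0}\mathfrak{P}(A(s))=-[\mathrm{ad}(A_{j})\vert\mathfrak{p}]^{T}$ produces a column whose $k$-th entry equals $\lambda([Z_{k},A_{j}])$. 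Ordering rows and columns so that the indices of $F$ precede those of $I$, the matrix $J_{\beta}(\lambda,0)$ becomes block triangular, with a $(p-m)\times(p-m)$ identity block and a diagonal block $Q=[\lambda([Z_{i_{r}},A_{j}])]_{1\le r,j\le m}$, so that $\det J_{\beta}(\lambda,0)=\det Q$.

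Next I would use that both $\mathfrak{p}$ and $\mathfrak{m}$ are commutative. Writing $\mathbf{e}=\{i_{1}<\dots<i_{m}\}\cup\{p+1,\dots,n\}$ and ordering $B(\lambda)$ accordingly, the $Z$--$Z$ block vanishes since $[\mathfrak{p},\mathfrak{p}]=0$ and the $A$--$A$ block vanishes since $[\mathfrak{m},\mathfrak{m}]=0$, leaving $B(\lambda)=\bigl[\begin{smallmatrix}0 & Q\\ -Q^{T} & 0\end{smallmatrix}\bigr]$ with the same $Q$ as above. A direct block computation gives $\det B(\lambda)=(\det Q)^{2}$, hence $|\det B(\lambda)|^{1/2}=|\det Q|=|\det J_{\beta}(\lambda,0)|$, and combining this with the Corwin--Greenleaf formula yields (\ref{Planch}).

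The routine but delicate point, and the step I expect to demand the most care, is the index bookkeeping that matches the jump indices $i_{1},\dots,i_{m}\le p$ appearing in the $s$-columns of $J_{\beta}(\lambda,0)$ with those appearing in the off-diagonal block of $B(\lambda)$; the strong Malcev ordering is what guarantees that all $Z$-type jump indices precede the $A$-indices, and it is precisely the two commutativity hypotheses that collapse $B(\lambda)$ to anti-block-diagonal form. Once these are in place, the equality of determinants is forced and no further estimates are needed.
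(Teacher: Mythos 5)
Your proof is correct, but it takes a genuinely different route from the paper's. The paper does not pass through the Pfaffian formula at all: it verifies the Plancherel identity directly, by writing $\widehat{\mathbf{F}}\left(  \sigma_{\lambda}\right)  $ as an integral operator with kernel $K_{\lambda,\mathbf{F}}\left(  x,a\right)  =\left[  \mathfrak{F}_{1}\mathbf{F}\right]  \left(  \beta\left(  \lambda,x\right)  ,x-a\right)  $, computing the Hilbert--Schmidt norm as an integral in $\left(  x,a\right)  $, observing that $\det J_{\beta}\left(  \lambda,x\right)  =\det J_{\beta}\left(  \lambda,0\right)  $ because $\mathfrak{P}\left(  A(x)\right)  $ is unipotent, and then changing variables via the diffeomorphism $\beta$ to reduce everything to the Euclidean Plancherel theorem on $\mathbb{R}^{p}\times\mathbb{R}^{m}$. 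You instead take the Corwin--Greenleaf density $\left\vert \det B\left(  \lambda\right)  \right\vert ^{1/2}d\lambda$ as the known answer and prove the pointwise identity $\left\vert \det J_{\beta}\left(  \lambda,0\right)  \right\vert =\left\vert \det B\left(  \lambda\right)  \right\vert ^{1/2}$ by pure linear algebra: your computation of the $s_{j}$-columns of $J_{\beta}\left(  \lambda,0\right)  $ as $\lambda\left(  \left[  Z_{k},A_{j}\right]  \right)  $ is right, the block-triangular reduction to $\det Q$ with $Q=\left[  \lambda\left(  \left[  Z_{i_{r}},A_{j}\right]  \right)  \right]  $ is right, and the collapse of $B\left(  \lambda\right)  $ to $\bigl[\begin{smallmatrix}0 & Q\\ -Q^{T} & 0\end{smallmatrix}\bigr]$ with $\det B\left(  \lambda\right)  =\left(  \det Q\right)  ^{2}$ uses exactly the two commutativity hypotheses of Condition \ref{cond} together with the fact that $\left\{  p+1,\cdots,n\right\}  \subseteq\mathbf{e}$. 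Your argument is shorter and makes the reason for the coincidence of the two densities transparent; its only cost is that it leans on the external normalization in \cite{Corwin} (the realization $\sigma_{\lambda}=\mathrm{Ind}_{P}^{N}\chi_{\lambda}$ with the fixed cross-section $\exp\left(  \mathbb{R}A_{1}+\cdots+\mathbb{R}A_{m}\right)  $ and Lebesgue measure on $N/P$) matching the one used here --- which it does, since $\mathfrak{p}$ is a constant polarization, so the potential $\lambda$-dependence of that normalization is absent. The paper's longer computation buys a self-contained proof that simultaneously sets up the kernel formula (\ref{Kernel}) reused later in Proposition \ref{convergence} and Lemma \ref{isometry}.
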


\begin{proof}
Since the set of smooth functions of compact support is dense in $L^{2}\left(
N\right)  ,$ it suffices to show that for any smooth function $\mathbf{F}$ of
compact support on the group $N$,
\[
\int_{\Sigma}\left\Vert \widehat{\mathbf{F}}\left(  \sigma_{\lambda}\right)
\right\Vert _{\mathcal{HS}\left(  L^{2}\left(  \mathbb{R}^{m}\right)  \right)
}^{2}\left\vert \det J_{\beta}\left(  \lambda,0\right)  \right\vert
d\lambda=\left\Vert \mathbf{F}\right\Vert _{L^{2}\left(  N\right)  }^{2}.
\]
In order to simplify our presentation, we shall identify the set $N=PM$ with
$\mathbb{R}^{p}\times\mathbb{R}^{m}$ via the map
\[
\exp\left(  t_{1}Z_{1}+\cdots+t_{p}Z_{p}\right)  \exp\left(  a_{1}A_{1}%
+\cdots+a_{m}A_{m}\right)  \mapsto\left(  t_{1},\cdots,t_{p},a_{1}%
,\cdots,a_{m}\right)  =\left(  t,a\right)  .
\]
For any smooth function $\mathbf{F}$ of compact support on the group $N,$ the
operator $\widehat{\mathbf{F}}\left(  \sigma_{\lambda}\right)  $ (see
(\ref{Fourier})) is defined on $L^{2}\left(  \mathbb{R}^{m}\right)  $ as
follows. For $\phi\in L^{2}\left(  \mathbb{R}^{m}\right)  $ we have%
\begin{align}
\left[  \widehat{\mathbf{F}}\left(  \sigma_{\lambda}\right)  \phi\right]
\left(  x\right)   &  =\int_{\mathbb{R}^{p}}\int_{\mathbb{R}^{m}}%
\mathbf{F}\left(  t,a\right)  \left[  \left(  \sigma_{\lambda}\left(
t,a\right)  \right)  \phi\right]  \left(  x\right)  dadt\label{comeback}\\
&  =\int_{\mathbb{R}^{p}}\int_{\mathbb{R}^{m}}\mathbf{F}\left(  t,a\right)
e^{2\pi i\left\langle \lambda,e^{-adA\left(  x\right)  }Z\left(  t\right)
\right\rangle }\phi\left(  x-a\right)  dadt\\
&  =\int_{\mathbb{R}^{p}}\int_{\mathbb{R}^{m}}\mathbf{F}\left(  t,a\right)
e^{2\pi i\left\langle \exp\left(  A\left(  x\right)  \right)  \cdot
\lambda,Z\left(  t\right)  \right\rangle }\phi\left(  x-a\right)  dadt.
\end{align}
Next, we recall that $\mathfrak{P}\left(  A(x)\right)  f=\left[
e^{-adA\left(  x\right)  |\mathfrak{p}}\right]  ^{T}f$ and
\[
\iota\left(  \exp\left(  A\left(  x\right)  \right)  \cdot\lambda\right)
=\left[
\begin{array}
[c]{c}%
\mathfrak{P}\left(  A(x)\right)  f\\
0
\end{array}
\right]  \text{ where }\iota\left(  \lambda\right)  =\left[
\begin{array}
[c]{c}%
f\\
0
\end{array}
\right]  .
\]
Next,
\begin{align*}
\left[  \widehat{\mathbf{F}}\left(  \sigma_{\lambda}\right)  \phi\right]
\left(  x\right)   &  =\int_{\mathbb{R}^{p}}\int_{\mathbb{R}^{m}}%
\mathbf{F}\left(  t,a\right)  e^{2\pi i\left\langle \exp\left(  A\left(
x\right)  \right)  \cdot\lambda,Z\left(  t\right)  \right\rangle }\phi\left(
x-a\right)  dadt\\
&  =\int_{\mathbb{R}^{p}}\int_{\mathbb{R}^{m}}\mathbf{F}\left(  t,a\right)
e^{2\pi i\left\langle \beta\left(  \lambda,x\right)  ,Z\left(  t\right)
\right\rangle }\phi\left(  x-a\right)  dadt\\
&  =\int_{\mathbb{R}^{p}}\int_{\mathbb{R}^{m}}\mathbf{F}\left(  t,x-a\right)
e^{2\pi i\left\langle \beta\left(  \lambda,x\right)  ,Z\left(  t\right)
\right\rangle }\phi\left(  a\right)  \text{ }da\text{ }dt\\
&  =\int_{\mathbb{R}^{p}}\left(  \int_{\mathbb{R}^{m}}\mathbf{F}\left(
t,x-a\right)  e^{2\pi i\left\langle \beta\left(  \lambda,x\right)  ,Z\left(
t\right)  \right\rangle }\text{ }dt\right)  \text{ }\phi\left(  a\right)  da.
\end{align*}
Thus, $\widehat{\mathbf{F}}\left(  \sigma_{\lambda}\right)  $ is an integral
operator on $L^{2}\left(  \mathbb{R}^{m}\right)  $ with kernel $K_{\lambda
,\mathbf{F}}$ given by
\begin{equation}
K_{\lambda,\mathbf{F}}\left(  x,a\right)  =\int_{\mathbb{R}^{p}}%
\mathbf{F}\left(  t,x-a\right)  e^{2\pi i\left\langle \beta\left(
\lambda,x\right)  ,Z\left(  t\right)  \right\rangle }dt. \label{Kernel}%
\end{equation}
Now, let $\mathfrak{F}_{1}$ be the partial Euclidean Fourier transform in the
direction of $t.$ It is clear that
\[
K_{\lambda,\mathbf{F}}\left(  x,a\right)  =\left[  \mathfrak{F}_{1}%
\mathbf{F}\right]  \left(  \beta\left(  \lambda,x\right)  ,x-a\right)  .
\]
Additionally, the square of the Hilbert-Schmidt norm of the operator
$\widehat{\mathbf{F}}\left(  \sigma_{\lambda}\right)  $ is given by
\begin{align*}
\left\Vert \widehat{\mathbf{F}}\left(  \sigma_{\lambda}\right)  \right\Vert
_{\mathcal{HS}\left(  L^{2}\left(  \mathbb{R}^{m}\right)  \right)  }^{2}  &
=\int_{\mathbb{R}^{m}}\int_{\mathbb{R}^{m}}\left\vert K_{\lambda,\mathbf{F}%
}\left(  x,a\right)  \right\vert ^{2}dx\text{ }da\\
&  =\int_{\mathbb{R}^{m}}\int_{\mathbb{R}^{m}}\left\vert \mathfrak{F}%
_{1}\mathbf{F}\left(  \beta\left(  \lambda,x\right)  ,x-a\right)  \right\vert
^{2}dx\text{ }da\\
&  =\int_{\mathbb{R}^{m}}\int_{\mathbb{R}^{m}}\left\vert \mathfrak{F}%
_{1}\mathbf{F}\left(  \beta\left(  \lambda,x\right)  ,a\right)  \right\vert
^{2}dx\text{ }da.
\end{align*}
Observing that
\[
\iota\left(  \beta\left(  \lambda,x+t\right)  \right)  =\mathfrak{P}\left(
A(x)\right)  \mathfrak{P}\left(  A(t)\right)  f,
\]
the components of $\beta\left(  \lambda,x+t\right)  $ may be computed by
multiplying a unipotent matrix by the matrix representation of $\beta\left(
\lambda,t\right)  $, the determinant of the Jacobian of the map $\beta$ at
$\left(  \lambda,x\right)  $ is then given by
\begin{align*}
\det J_{\beta}\left(  \left(  \lambda,x+t\right)  \right)   &  =\det\left(
\left[  e^{\left(  -adx_{1}A_{1}-\cdots-adx_{m}A_{m}\right)  |_{\mathfrak{p}}%
}\right]  \right)  \det J_{\beta}\left(  \lambda,t\right) \\
&  =1\times\det J_{\beta}\left(  \lambda,t\right)  .
\end{align*}
It follows that
\[
\det J_{\beta}\left(  \lambda,x\right)  =\det J_{\beta}\left(  \lambda
,0\right)  =\mathbf{P}\left(  \lambda\right)
\]
where $\mathbf{P}\left(  \lambda\right)  $ is a polynomial in the coordinates
of $\lambda$. Next,
\begin{align*}
&  \int_{\Sigma}\left\Vert \widehat{\mathbf{F}}\left(  \sigma_{\lambda
}\right)  \right\Vert _{\mathcal{H}S\left(  L^{2}\left(  \mathbb{R}%
^{m}\right)  \right)  }^{2}\left\vert \det J_{\beta}\left(  \lambda,0\right)
\right\vert d\lambda\\
&  =\int_{\Sigma}\left(  \int_{%
\mathbb{R}
^{m}}\int_{%
\mathbb{R}
^{m}}\left\vert \left(  \mathfrak{F}_{1}\mathbf{F}\right)  \left(
\beta\left(  \lambda,x\right)  ,a\right)  \right\vert ^{2}da\text{ }dx\text{
}\left\vert \det J_{\beta}\left(  \lambda,0\right)  \right\vert \right)
d\lambda\\
&  =\int_{\Sigma}\int_{%
\mathbb{R}
^{m}}\int_{%
\mathbb{R}
^{m}}\left\vert \left(  \mathfrak{F}_{1}\mathbf{F}\right)  \left(
\beta\left(  \lambda,x\right)  ,a\right)  \right\vert ^{2}da\text{ }dx\text{
}\left\vert \det J_{\beta}\left(  \lambda,0\right)  \right\vert \text{
}d\lambda\\
&  =\int_{\Sigma\times%
\mathbb{R}
^{m}}\int_{%
\mathbb{R}
^{m}}\left\vert \left(  \mathfrak{F}_{1}\mathbf{F}\right)  \left(
\beta\left(  \lambda,x\right)  ,a\right)  \right\vert ^{2}da\text{ }\left\vert
\det J_{\beta}\left(  \lambda,0\right)  \right\vert \text{ }d\left(
\lambda,x\right) \\
&  =\int_{\Sigma\times%
\mathbb{R}
^{m}}\int_{%
\mathbb{R}
^{m}}\left\vert \left(  \mathfrak{F}_{1}\mathbf{F}\right)  \left(
\beta\left(  \lambda,x\right)  ,a\right)  \right\vert ^{2}da\text{ }d\left(
\beta\left(  \lambda,x\right)  \right) \\
&  =\int_{\Omega}\int_{%
\mathbb{R}
^{m}}\left\vert \left(  \mathfrak{F}_{1}\mathbf{F}\right)  \left(  z,a\right)
\right\vert ^{2}da\text{ }dz.
\end{align*}
Next, appealing to Plancherel's theorem
\[
\int_{\Sigma}\left\Vert \widehat{\mathbf{F}}\left(  \sigma_{\lambda}\right)
\right\Vert _{\mathcal{H}S\left(  L^{2}\left(  \mathbb{R}^{m}\right)  \right)
}^{2}\left\vert \det J_{\beta}\left(  \lambda,0\right)  \right\vert
d\lambda=\int_{%
\mathbb{R}
^{p}}\int_{%
\mathbb{R}
^{m}}\left\vert \mathbf{F}\left(  z,a\right)  \right\vert ^{2}dadz=\left\Vert
\mathbf{F}\right\Vert _{L^{2}\left(  N\right)  }^{2}.
\]

\end{proof}

We shall now define a transform which plays an important role in proving our
main results. Let $\mathbf{A}$ be a $d\mu$-measurable subset of $\mathbb{R}%
^{n-2m}.$ Define (in a formal way) the map
\[
J_{\mathbf{A}}:L^{2}\left(  \mathbf{A}\times\mathbb{R}^{m},d\mu\left(
\lambda\right)  \right)  \rightarrow l^{2}\left(  \Gamma\right)
\]
such that for $l=\left(  l_{1},\cdots,l_{m}\right)  \in\mathbb{Z}^{m},$%
\[
Z\left(  k\right)  =\sum_{j=1}^{p}k_{j}Z_{j}\in\sum_{j=1}^{p}\mathbb{Z}%
Z_{j}\text{ and }A\left(  l\right)  =\sum_{j=1}^{m}l_{j}A_{j}\in\sum_{j=1}%
^{m}\mathbb{Z}A_{j}%
\]
we have
\begin{equation}
\left[  J_{\mathbf{A}}F\right]  \left(  \exp\left(  Z\left(  k\right)
\right)  \exp\left(  A\left(  l\right)  \right)  \right)  =\int_{\mathbf{A}%
}\int_{\left[  0,1\right)  ^{m}}F\left(  \lambda,t-l\right)  e^{2\pi
i\left\langle \beta\left(  \lambda,t\right)  ,Z\left(  k\right)  \right\rangle
}dt\text{ }d\mu\left(  \lambda\right)  \label{map J}%
\end{equation}
where $k\in\mathbb{Z}^{m}.$

Let $\mathfrak{F}_{\mathbb{R}^{p}/\mathbb{Z}^{p}}$ be the Fourier transform
defined on $L^{2}\left(  \mathbb{R}^{p}/\mathbb{Z}^{p}\right)  .$ Let $1_{X}$
denotes the indicator function for a given set $X.$

\begin{proposition}
\label{convergence}Assume that $\mathbf{A}$ is a $d\mu$-measurable bounded
subset of $\mathbb{R}^{n-2m}$. If $H\in L^{2}\left(  \mathbf{A}\times
\mathbb{R}^{m}\right)  $ is a smooth function of compact support, then
$J_{\mathbf{A}}H\in l^{2}\left(  \Gamma\right)  .$
\end{proposition}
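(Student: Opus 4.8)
The plan is to compute the $l^{2}(\Gamma)$ norm of $J_{\mathbf{A}}H$ directly, using that every element of $\Gamma$ is uniquely $\exp(Z(k))\exp(A(l))$ with $(k,l)\in\mathbb{Z}^{p}\times\mathbb{Z}^{m}$, so the goal reduces to showing
\[
\sum_{l\in\mathbb{Z}^{m}}\sum_{k\in\mathbb{Z}^{p}}\left|[J_{\mathbf{A}}H]\left(\exp(Z(k))\exp(A(l))\right)\right|^{2}<\infty.
\]
First I would fix $l$ and study the inner sum over $k$. Since $\beta(\lambda,t)\in\mathfrak{p}^{\ast}$ and $Z(k)=\sum_{j}k_{j}Z_{j}$, the pairing $\langle\beta(\lambda,t),Z(k)\rangle$ is just $\langle\beta(\lambda,t),k\rangle$ with $\beta(\lambda,t)$ read as a vector in $\mathbb{R}^{p}$. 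I then change variables by $z=\beta(\lambda,t)$ over $\mathbf{A}\times[0,1)^{m}$. By Lemma \ref{beta 1}, $\beta$ is a diffeomorphism onto its range, and by the computation inside the proof of Lemma \ref{Plancherel measure} one has $\det J_{\beta}(\lambda,t)=\det J_{\beta}(\lambda,0)$, so that $d\mu(\lambda)\,dt=dz$ under this substitution. Writing $B=\beta(\mathbf{A}\times[0,1)^{m})$ and $g_{l}(z)=H(\lambda(z),t(z)-l)\,1_{B}(z)$ on $\mathbb{R}^{p}$, the defining formula (\ref{map J}) becomes
\[
[J_{\mathbf{A}}H]\left(\exp(Z(k))\exp(A(l))\right)=\int_{\mathbb{R}^{p}}g_{l}(z)\,e^{2\pi i\langle z,k\rangle}\,dz.
\]

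The second step is to recognize these numbers as Fourier coefficients on the torus. Periodizing $g_{l}$ to $G_{l}(z)=\sum_{n\in\mathbb{Z}^{p}}g_{l}(z+n)$, and using $e^{2\pi i\langle n,k\rangle}=1$ for integer $n,k$, the integral above equals (up to conjugation of the index) a Fourier coefficient of $G_{l}$ under $\mathfrak{F}_{\mathbb{R}^{p}/\mathbb{Z}^{p}}$. Parseval's identity on $\mathbb{R}^{p}/\mathbb{Z}^{p}$ then gives, for each fixed $l$,
\[
\sum_{k\in\mathbb{Z}^{p}}\left|[J_{\mathbf{A}}H]\left(\exp(Z(k))\exp(A(l))\right)\right|^{2}=\left\Vert G_{l}\right\Vert_{L^{2}(\mathbb{R}^{p}/\mathbb{Z}^{p})}^{2}.
\]
Because $\mathbf{A}$ is bounded and $\beta$ is continuous, $B$ is bounded, so $g_{l}$ is a bounded function of compact support; hence $G_{l}$ is a locally finite sum lying in $L^{\infty}\subset L^{2}$ of the torus, and each inner sum is finite.

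The decisive final observation is that only finitely many $l$ contribute. Since $H$ is smooth of compact support, its support in the variable $t=(t_{1},\dots,t_{m})$ sits in a cube $[-R,R]^{m}$. In (\ref{map J}) the integrand carries the factor $H(\lambda,t-l)$ with $t\in[0,1)^{m}$, so the term vanishes unless $t-l\in[-R,R]^{m}$ for some $t\in[0,1)^{m}$, forcing $l$ into the bounded set $(-R,R+1)^{m}\cap\mathbb{Z}^{m}$. Summing the finitely many finite inner sums yields $J_{\mathbf{A}}H\in l^{2}(\Gamma)$. The hard part will be the change-of-variables and measure-matching step, namely verifying that $d\mu(\lambda)\,dt$ pushes forward to Lebesgue measure $dz$ on $B$, which is exactly where Lemmas \ref{beta 1} and \ref{Plancherel measure} must be invoked carefully; the remainder is the finite-support bookkeeping and a routine appeal to Parseval on the torus.
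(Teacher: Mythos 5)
Your proposal is correct, and its skeleton is the same as the paper's: change variables through the diffeomorphism $\beta$, use $\det J_{\beta}(\lambda,t)=\det J_{\beta}(\lambda,0)$ so that $d\mu(\lambda)\,dt$ pushes forward to Lebesgue measure, recognize the resulting integrals as Fourier coefficients on $\mathbb{R}^{p}/\mathbb{Z}^{p}$, apply Parseval for each fixed $l$, and observe that compact support of $H$ in $t$ leaves only finitely many $l$. Where you genuinely diverge is in how the image set is handled. The paper first substitutes $s=t-l$, which moves the domain to $\beta\left(\mathbf{A}\times\left(\left[0,1\right)^{m}-l\right)\right)$ and the frequency to $a_{l}(k)=e^{-adA(l)}Z(k)\in\mathbb{Z}^{p}$; it then writes that $l$-dependent set as a finite disjoint union of pieces, each inside a fundamental domain, and estimates the resulting finite sum of Fourier coefficients together with its cross terms. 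You skip the shift entirely, keep the single fixed bounded set $B=\beta\left(\mathbf{A}\times\left[0,1\right)^{m}\right)$, and absorb the fact that $B$ need not lie in a fundamental domain by periodizing $g_{l}$ over $\mathbb{Z}^{p}$: since $g_{l}$ is bounded with support in the bounded set $B$, the periodization $G_{l}$ is a locally finite sum, hence in $L^{\infty}\subset L^{2}$ of the torus, and Parseval gives the inner sum in one stroke. Your periodization replaces the paper's tiling decomposition and cross-term bookkeeping with a single cleaner identity, at the cost of nothing; the paper's version has the minor advantage that the shifted form $\beta(\lambda,s+l)=\exp(A(s))\exp(A(l))\cdot\lambda$ and the integer frequencies $a_{l}(k)$ reappear verbatim in the isometry argument of Lemma \ref{isometry}, so the two computations run in parallel there. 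Both arguments are complete; just make sure, as you already flag, that you justify boundedness of $B$ (immediate, since $\beta$ is polynomial in the coordinates of $\lambda$ and $t$) so that the periodization is indeed locally finite.
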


\begin{proof}
First, observe that
\[
\left[  J_{\mathbf{A}}H\right]  \left(  \exp\left(  Z\left(  k\right)
\right)  \exp\left(  A\left(  l\right)  \right)  \right)  =\int_{\mathbf{A}%
}\int_{\left[  0,1\right)  ^{m}}H\left(  \lambda,t-l\right)  \text{ }e^{2\pi
i\left\langle \beta\left(  \lambda,t\right)  ,Z\left(  k\right)  \right\rangle
}dt\text{ }d\mu\left(  \lambda\right)
\]
and for a fixed $\sum_{j=1}^{p}k_{j}Z_{j},$ the sequence
\[
\left(  \left[  J_{\mathbf{A}}H\right]  \left(  \exp\left(  Z\left(  k\right)
\right)  \exp\left(  A\left(  l\right)  \right)  \right)  \right)
_{l\in\mathbb{Z}^{m}}%
\]
has compact support. Making the change of variable $s=t-l$ we obtain that%
\[
\left[  J_{\mathbf{A}}H\right]  \left(  \exp\left(  Z\left(  k\right)
\right)  \exp\left(  A\left(  l\right)  \right)  \right)  =\int_{\mathbf{A}%
}\int_{\left[  0,1\right)  ^{m}-l}H\left(  \lambda,s\right)  \text{ }e^{2\pi
i\left\langle \beta\left(  \lambda,s+l\right)  ,Z\left(  k\right)
\right\rangle }ds\text{ }d\mu\left(  \lambda\right)  .
\]
Next, since
\[
\beta\left(  \lambda,s+l\right)  =\exp\left(  A\left(  s\right)  \right)
\exp\left(  A\left(  l\right)  \right)  \cdot\lambda
\]
and
\[
d\mu\left(  \lambda\right)  =\left\vert \det J_{\beta\left(  \lambda,0\right)
}\right\vert d\lambda
\]
it follows that
\begin{align*}
\left[  J_{\mathbf{A}}H\right]  \left(  \exp\left(  Z\left(  k\right)
\right)  \exp\left(  A\left(  l\right)  \right)  \right)   &  =\int
_{\mathbf{A\times}\left(  \left[  0,1\right)  ^{m}-l\right)  }H\left(
\lambda,s\right)  \text{ }e^{2\pi i\left\langle \beta\left(  \lambda
,s+l\right)  ,Z\left(  k\right)  \right\rangle }\text{ }\left\vert \det
J_{\beta\left(  \lambda,0\right)  }\right\vert \text{ }d\left(  \lambda
,s\right)  \\
&  =\int_{\mathbf{A\times}\left(  \left[  0,1\right)  ^{m}-l\right)  }H\left(
\lambda,s\right)  \text{ }e^{2\pi i\left\langle \beta\left(  \lambda,s\right)
,e^{-adA\left(  l\right)  }Z\left(  k\right)  \right\rangle }\text{
}\left\vert \det J_{\beta\left(  \lambda,0\right)  }\right\vert \text{
}d\left(  \lambda,s\right)  .
\end{align*}
The last equality above is due to the equation%
\[
\left\langle \exp\left(  \sum_{j=1}^{m}s_{j}A_{j}\right)  \cdot\omega,Z\left(
k\right)  \right\rangle =\left\langle \omega,e^{-ad\left(  \sum_{j=1}^{m}%
s_{j}A_{j}\right)  }Z\left(  k\right)  \right\rangle \text{ for }\omega
\in\Sigma.
\]
Next, the change of variable $r=\beta\left(  \lambda,s\right)  $ yields
\[
\left[  J_{\mathbf{A}}H\right]  \left(  \exp\left(  Z\left(  k\right)
\right)  \exp\left(  A\left(  l\right)  \right)  \right)  =\int_{\beta\left(
\mathbf{A\times}\left(  \left[  0,1\right)  ^{m}-l\right)  \right)  }\left(
H\circ\beta^{-1}\right)  \left(  r\right)  \text{ }e^{2\pi i\left\langle
r,e^{-adA\left(  l\right)  }Z\left(  k\right)  \right\rangle }\text{ }dr
\]
where $dr$ is the Lebesgue measure on $\mathbb{R}^{p}.$ Next for each $l,$ we
write $\beta\left(  \mathbf{A\times}\left(  \left[  0,1\right)  ^{m}-l\right)
\right)  $ as a finite disjoint union of subsets of $\mathbb{R}^{p}$; each
contained in a fundamental domain of $\mathbb{Z}^{p}$ as follows:
\[
\beta\left(  \mathbf{A\times}\left(  \left[  0,1\right)  ^{m}-l\right)
\right)  =\text{ }\overset{\cdot}{%
{\displaystyle\bigcup\limits_{j\in J\left(  H,l\right)  }}
}\left(  K_{\mathbf{A},l}+j\right)  \text{ \ \ \ \ \ \ \ \ \ \ \ \ (where
}J(H,l)\subset\mathbb{Z}^{p}\text{)}.
\]
Letting $1_{K_{\mathbf{A},l}+j}$ be the indicator function of the set
$K_{\mathbf{A},l}+j,$ we obtain
\begin{align*}
\left[  J_{\mathbf{A}}H\right]  \left(  \exp\left(  Z\left(  k\right)
\right)  \exp\left(  A\left(  l\right)  \right)  \right)   &  =\sum_{j\in
J\left(  H,l\right)  }\int_{K_{\mathbf{A},l}+j}\left(  H\circ\beta
^{-1}\right)  \left(  r\right)  \text{ }e^{2\pi i\left\langle r,e^{-adA\left(
l\right)  }Z\left(  k\right)  \right\rangle }\text{ }dr\\
&  =\sum_{j\in J\left(  H,l\right)  }\left[  \mathfrak{F}_{\mathbb{R}%
^{p}/\mathbb{Z}^{p}}\left(  \left(  H\circ\beta^{-1}\right)  \times
1_{K_{\mathbf{A},l}+j}\right)  \right]  \left(  e^{-adA\left(  l\right)
}Z\left(  k\right)  \right)
\end{align*}
where for each $l\in\mathbb{Z}^{m}$
\[
\sum_{j\in J\left(  H,l\right)  }\left[  \mathfrak{F}_{\mathbb{R}%
^{p}/\mathbb{Z}^{p}}\left(  \left(  H\circ\beta^{-1}\right)  \times
1_{K_{\mathbf{A},l}+j}\right)  \right]  \left(  e^{-adA\left(  l\right)
}Z\left(  k\right)  \right)
\]
is a finite sum of Fourier transforms of smooth functions of compact support.
In order to avoid cluster of notation, we set%
\begin{equation}
e^{-adA\left(  l\right)  }Z\left(  k\right)  =e^{-adA\left(  l\right)
}\left(  \sum_{j=1}^{p}k_{j}Z_{j}\right)  =a_{l}\left(  k\right)  .
\end{equation}
Finally,
\begin{align*}
\left\Vert \left[  J_{\mathbf{A}}H\right]  \right\Vert _{l^{2}\left(
\Gamma\right)  }^{2} &  =\sum_{\left(  k,l\right)  \in\mathbb{Z}^{p}%
\times\mathbb{Z}^{m}}\left\vert J_{\mathbf{A}}H\left(  \exp\left(  Z\left(
k\right)  \right)  \exp\left(  A\left(  l\right)  \right)  \right)
\right\vert ^{2}\\
&  =\sum_{k\in\mathbb{Z}^{p}}\sum_{l\in\mathbb{Z}^{m}}\left\vert
J_{\mathbf{A}}H\left(  \exp\left(  Z\left(  k\right)  \right)  \exp\left(
A\left(  l\right)  \right)  \right)  \right\vert ^{2}\\
&  =\sum_{k\in\mathbb{Z}^{p}}\sum_{l\in F}\left\vert \sum_{j\in J\left(
H,l\right)  }\left[  \mathfrak{F}_{\mathbb{R}^{p}/\mathbb{Z}^{p}}\left(
\left(  H\circ\beta^{-1}\right)  \times1_{K_{\mathbf{A},l}+j}\right)  \right]
\left(  a_{l}\left(  k\right)  \right)  \right\vert ^{2}%
\end{align*}
where $F$ is a finite subset of $\mathbb{Z}^{m}$ and
\begin{align*}
\left\Vert \left[  J_{\mathbf{A}}H\right]  \right\Vert _{l^{2}\left(
\Gamma\right)  }^{2} &  =\sum_{l\in F}\sum_{k\in\mathbb{Z}^{p}}\left\vert
\sum_{j\in J\left(  H,l\right)  }\left[  \mathfrak{F}_{\mathbb{R}%
^{p}/\mathbb{Z}^{p}}\left(  \left(  H\circ\beta^{-1}\right)  \times
1_{K_{\mathbf{A},l}+j}\right)  \right]  \left(  a_{l}\left(  k\right)
\right)  \right\vert ^{2}.\\
&  \leq\sum_{l\in F}\sum_{k\in\mathbb{Z}^{p}}\left(  \sum_{j\in J\left(
H,l\right)  }\left\vert \left(  \mathfrak{F}_{\mathbb{R}^{p}/\mathbb{Z}^{p}%
}\left(  \overset{=\Theta_{H,l,j}}{\overbrace{\left(  H\circ\beta^{-1}\right)
\times1_{K_{\mathbf{A},l}+j}}}\right)  \right)  \left(  a_{l}\left(  k\right)
\right)  \right\vert \right)  ^{2}.
\end{align*}
Put
\[
S_{H}\left(  l\right)  =\sum_{k\in\mathbb{Z}^{p}}\left(  \sum_{j\in J\left(
H,l\right)  }\left\vert \left(  \mathfrak{F}_{\mathbb{R}^{p}/\mathbb{Z}^{p}%
}\Theta_{H,l,j}\right)  \left(  a_{l}\left(  k\right)  \right)  \right\vert
\right)  ^{2}.
\]
Expanding the inner sum
\[
\left(  \sum_{j\in J\left(  H,l\right)  }\left\vert \left(  \mathfrak{F}%
_{\mathbb{R}^{p}/\mathbb{Z}^{p}}\Theta_{H,l,j}\right)  \left(  a_{l}\left(
k\right)  \right)  \right\vert \right)  ^{2}%
\]
we obtain that
\begin{align*}
S_{H}\left(  l\right)   &  =\sum_{k\in\mathbb{Z}^{p}}\sum_{j\in J\left(
H,l\right)  }\left\vert \left(  \mathfrak{F}_{\mathbb{R}^{p}/\mathbb{Z}^{p}%
}\Theta_{H,l,j}\right)  \left(  a_{l}\left(  k\right)  \right)  \right\vert
^{2}\\
&  +2\sum_{j\neq j^{\prime}\text{and }j,j^{\prime}\in J\left(  H,l\right)
}\sum_{k\in\mathbb{Z}^{p}}\left\vert \left(  \mathfrak{F}_{\mathbb{R}%
^{p}/\mathbb{Z}^{p}}\Theta_{H,l,j}\right)  \left(  a_{l}\left(  k\right)
\right)  \left(  \mathfrak{F}_{\mathbb{R}^{p}/\mathbb{Z}^{p}}\Theta
_{H,l,j^{\prime}}\right)  \left(  a_{l}\left(  k\right)  \right)  \right\vert
\\
&  =\sum_{j\in J\left(  H,l\right)  }\left\Vert \Theta_{H,l,j}\right\Vert
^{2}\\
&  +2\sum_{j\neq j^{\prime}\text{and }j,j^{\prime}\in J\left(  H,l\right)
}\sum_{k\in\mathbb{Z}^{p}}\left\vert \left(  \mathfrak{F}_{\mathbb{R}%
^{p}/\mathbb{Z}^{p}}\Theta_{H,l,j}\right)  \left(  a_{l}\left(  k\right)
\right)  \times\left(  \mathfrak{F}_{\mathbb{R}^{p}/\mathbb{Z}^{p}}%
\Theta_{H,l,j^{\prime}}\right)  \left(  a_{l}\left(  k\right)  \right)
\right\vert \\
&  =\sum_{j\in J\left(  H,l\right)  }\left\Vert \Theta_{H,l,j}\right\Vert
_{L^{2}\left(  \mathbb{R}^{p}/\mathbb{Z}^{p}\right)  }^{2}+2\sum_{j\neq
j^{\prime}\text{and }j,j^{\prime}\in J\left(  H,l\right)  }\left\langle
\Theta_{H,l,j},\Theta_{H,l,j^{\prime}}\right\rangle _{L^{2}\left(
\mathbb{R}^{p}/\mathbb{Z}^{p}\right)  }.
\end{align*}
Since $J\left(  H,l\right)  $ is a finite set, appealing to the fact that each
$\Theta_{H,l,j}\ $is square-integrable over a fundamental domain of
$\mathbb{Z}^{p}$ we obtain the desired result
\[
\left\Vert \left[  J_{\mathbf{A}}H\right]  \right\Vert _{l^{2}\left(
\Gamma\right)  }^{2}\leq\text{ }\underset{\text{finite sum}}{\underbrace
{\sum_{l\in F}\sum_{j\in J\left(  H,l\right)  }}}\left\Vert \Theta
_{H,l,j}\right\Vert _{L^{2}\left(  \mathbb{R}^{p}/\mathbb{Z}^{p}\right)  }%
^{2}+\underset{\text{finite sum}}{\underbrace{\sum_{l\in F}\sum_{j\neq
j^{\prime}\text{and }j,j^{\prime}\in J\left(  H,l\right)  }}}2\left\langle
\Theta_{H,l,j},\Theta_{H,l,j^{\prime}}\right\rangle _{L^{2}\left(
\mathbb{R}^{p}/\mathbb{Z}^{p}\right)  }.
\]
Thus $\left\Vert \left[  J_{\mathbf{A}}H\right]  \right\Vert _{l^{2}\left(
\Gamma\right)  }^{2}$ is finite. 
\end{proof}

Let $\tau$ be the unitary representation of $\Gamma$ which acts on the Hilbert
space $L^{2}\left(  \mathbf{A}\times\mathbb{R}^{m},d\mu\left(  \lambda\right)
\right)  $ as follows:
\[
\left[  \tau\left(  \gamma\right)  F\right]  \left(  \lambda,t\right)
=\sigma_{\lambda}\left(  \gamma\right)  F\left(  \lambda,t\right)  .
\]
We shall prove the following three important facts.

\begin{itemize}
\item $J_{\mathbf{A}}$ intertwines $\tau$ with the right regular
representation of the discrete uniform group $\Gamma$ (Lemma \ref{intertwines})

\item If $\left\vert \beta\left(  \mathbf{A}\times\left[  0,1\right)
^{m}\right)  \right\vert >0$ \ and if $\beta\left(  \mathbf{A}\times\left[
0,1\right)  ^{m}\right)  $ is contained in a fundamental domain of
$\mathbb{Z}^{p}$ then $J_{\mathbf{A}}$ defines an isometry on a dense subset
of $L^{2}\left(  \mathbf{A}\times\mathbb{R}^{m},d\mu\left(  \lambda\right)
\right)  $ into $l^{2}\left(  \Gamma\right)  $ which extends uniquely to an
isometry of $L^{2}\left(  \mathbf{A}\times\mathbb{R}^{m},d\mu\left(
\lambda\right)  \right)  $ into $l^{2}\left(  \Gamma\right)  $ (Lemma
\ref{isometry})

\item If $\left\vert \beta\left(  \mathbf{A}\times\left[  0,1\right)
^{m}\right)  \right\vert >0$ \ and if $\beta\left(  \mathbf{A}\times\left[
0,1\right)  ^{m}\right)  $ is up to a null set equal to a fundamental domain
of $\mathbb{Z}^{p}$ then $J_{\mathbf{A}}$ defines a unitary map (Lemma
\ref{unit})
\end{itemize}

\begin{lemma}
\label{intertwines}The map $J_{\mathbf{A}}$ intertwines $\tau$ with the right
regular representation of $\Gamma.$
\end{lemma}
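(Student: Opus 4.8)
The plan is to verify directly that $J_{\mathbf{A}}\circ\tau(\gamma)=R(\gamma)\circ J_{\mathbf{A}}$ for every $\gamma\in\Gamma$, where $R$ denotes the right regular representation of $\Gamma$ on $\ell^{2}(\Gamma)$, normalized so that $[R(\gamma)\phi](x)=\phi(x\gamma)$. Since both $\tau$ and $R$ are unitary and, by Proposition \ref{convergence}, $J_{\mathbf{A}}$ is well defined on the dense subspace of smooth compactly supported functions $F$ (and $\tau(\gamma)$ preserves this subspace, as it only translates the $t$-variable by an integer vector and multiplies by a smooth character), it suffices to establish the identity there and extend by continuity. So I would fix $\gamma=\exp(Z(k'))\exp(A(l'))$ with $k'\in\mathbb{Z}^{p}$, $l'\in\mathbb{Z}^{m}$, and evaluate both sides at an arbitrary $\gamma_{1}=\exp(Z(k))\exp(A(l))\in\Gamma$.

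For the left-hand side I would use the explicit realization (\ref{irreducibles}) together with the identity $\langle\lambda,e^{-adA(x)}Z(t)\rangle=\langle\beta(\lambda,x),Z(t)\rangle$, valid because $Z(t)\in\mathfrak{p}$ and $\beta(\lambda,x)=\exp(A(x))\cdot\lambda|\mathfrak{p}^{\ast}$. Applying the action formula with $Z$-coordinate $Z(k')$, shift $l'$, and position $t-l$ gives $[\tau(\gamma)F](\lambda,t-l)=e^{2\pi i\langle\beta(\lambda,t-l),Z(k')\rangle}F(\lambda,t-(l+l'))$, whence
\[
[J_{\mathbf{A}}(\tau(\gamma)F)](\gamma_{1})=\int_{\mathbf{A}}\int_{[0,1)^{m}}e^{2\pi i\langle\beta(\lambda,t-l),Z(k')\rangle}\,F(\lambda,t-(l+l'))\,e^{2\pi i\langle\beta(\lambda,t),Z(k)\rangle}\,dt\,d\mu(\lambda).
\]

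For the right-hand side I would compute the group product in semidirect form using the multiplication law: $\gamma_{1}\gamma=\exp(Z(k)+e^{adA(l)}Z(k'))\exp(A(l+l'))$. Because the matrices $e^{adA_{j}}|\mathfrak{p}$ have integer entries, $e^{adA(l)}Z(k')$ lies in $\sum_{j}\mathbb{Z}Z_{j}$, so $Z(k)+e^{adA(l)}Z(k')=Z(k'')$ for some $k''\in\mathbb{Z}^{p}$; this is exactly the integrality that makes $\Gamma$ a group. Substituting into the definition (\ref{map J}) and splitting the pairing as $\langle\beta(\lambda,t),Z(k'')\rangle=\langle\beta(\lambda,t),Z(k)\rangle+\langle\beta(\lambda,t),e^{adA(l)}Z(k')\rangle$, the two expressions for $[J_{\mathbf{A}}F](\gamma_{1}\gamma)$ and $[J_{\mathbf{A}}(\tau(\gamma)F)](\gamma_{1})$ will coincide provided
\[
\langle\beta(\lambda,t),e^{adA(l)}Z(k')\rangle=\langle\beta(\lambda,t-l),Z(k')\rangle.
\]

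The crux of the argument is this last identity, and it is where the commutativity of $\mathfrak{m}$ enters decisively. Writing the pairing through the coadjoint action (\ref{coadjoint}), the left side equals $\lambda(e^{-adA(t)}e^{adA(l)}Z(k'))$; since $\mathfrak{m}$ is abelian the operators $adA(t)$ and $adA(l)$ commute, so $e^{-adA(t)}e^{adA(l)}=e^{-adA(t-l)}$ and the expression becomes $\lambda(e^{-adA(t-l)}Z(k'))=\langle\beta(\lambda,t-l),Z(k')\rangle$, as required. With this in hand the integrands on the two sides agree pointwise, giving $[J_{\mathbf{A}}(\tau(\gamma)F)](\gamma_{1})=[J_{\mathbf{A}}F](\gamma_{1}\gamma)=[R(\gamma)(J_{\mathbf{A}}F)](\gamma_{1})$ for all $\gamma_{1}\in\Gamma$, which is the desired intertwining; since every step stays within the dense subspace on which $J_{\mathbf{A}}$ lands in $\ell^{2}(\Gamma)$, the identity extends to all of $L^{2}(\mathbf{A}\times\mathbb{R}^{m},d\mu)$ by density. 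I expect the only genuinely delicate point to be keeping the bookkeeping of the semidirect-product multiplication and the commutator identity straight; everything else is a change of variables already licensed by Lemma \ref{beta 1} and the computations preceding this statement.
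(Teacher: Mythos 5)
Your proposal is correct and follows essentially the same route as the paper: a direct verification using the explicit realization (\ref{irreducibles}), the identity $\langle\lambda,e^{-adA(x)}Z\rangle=\langle\beta(\lambda,x),Z\rangle$ for $Z\in\mathfrak{p}$, and the commutativity of $\mathfrak{m}$ to convert $e^{-adA(t)}e^{adA(l)}$ into $e^{-adA(t-l)}$, matching the semidirect-product multiplication $\gamma_{1}\gamma=\exp\left(Z(k)+e^{adA(l)}Z(k')\right)\exp\left(A(l+l')\right)$. The only cosmetic difference is that you treat a general $\gamma=\exp(Z(k'))\exp(A(l'))$ in one computation, whereas the paper checks the two factors $\exp A(l')$ and $\exp Z(k')$ separately and relies on these generating $\Gamma$.
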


\begin{proof}
Let $F\in L^{2}\left(  \mathbf{A}\times\mathbb{R}^{m},d\mu\left(
\lambda\right)  \right)  $ such that $F$ is smooth with compact support. Put
\begin{align*}
A\left(  l\right)   &  =l_{1}A_{1}+\cdots+l_{m}A_{m}\text{ and }Z\left(
k\right)  =k_{1}Z_{1}+\cdots+k_{p}Z_{p}\\
A\left(  l^{\prime}\right)   &  =l_{1}^{\prime}A_{1}+\cdots+l_{m}^{\prime
}A_{m}\text{ and }Z\left(  k^{\prime}\right)  =k_{1}^{\prime}Z_{1}%
+\cdots+k_{p}^{\prime}Z_{p}.
\end{align*}
Firstly,%
\begin{align*}
&  \left[  J_{\mathbf{A}}\tau\left(  \exp A\left(  l^{\prime}\right)  \right)
F\right]  \left(  \exp Z\left(  k\right)  \exp A\left(  l\right)  \right)  \\
&  =\int_{\mathbf{A}}\int_{\left[  0,1\right)  ^{m}}\left[  \sigma_{\lambda
}\left(  \exp A\left(  l^{\prime}\right)  \right)  F\right]  \left(
\lambda,t-l\right)  e^{2\pi i\left\langle \beta\left(  \lambda,t\right)
,Z\left(  k\right)  \right\rangle }dtd\mu\left(  \lambda\right)  \\
&  =\int_{\mathbf{A}}\int_{\left[  0,1\right)  ^{m}}F\left(  \lambda,t-\left(
l+l^{\prime}\right)  \right)  e^{2\pi i\left\langle \beta\left(
\lambda,t\right)  ,Z\left(  k\right)  \right\rangle }dtd\mu\left(
\lambda\right)  \\
&  =J_{\mathbf{A}}F\left(  \left(  \exp Z\left(  k\right)  \exp A\left(
l+l^{\prime}\right)  \right)  \right)  \\
&  =R\left(  \exp A\left(  l^{\prime}\right)  \right)  J_{\mathbf{A}}F\left(
\exp Z\left(  k\right)  \exp A\left(  l\right)  \right)  .
\end{align*}
Secondly,%
\begin{align*}
&  \left[  J_{\mathbf{A}}\tau\left(  \exp Z\left(  k^{\prime}\right)  \right)
F\right]  \left(  \exp Z\left(  k\right)  \exp A\left(  l\right)  \right)  \\
&  =\int_{\mathbf{A}}\int_{\left[  0,1\right)  ^{m}}\left[  \sigma_{\lambda
}\left(  \exp Z\left(  k^{\prime}\right)  \right)  F\right]  \left(
\lambda,t-l\right)  e^{2\pi i\left\langle \beta\left(  \lambda,t\right)
,Z\left(  k\right)  \right\rangle }dtd\mu\left(  \lambda\right)  \\
&  =\int_{\mathbf{A}}\int_{\left[  0,1\right)  ^{m}}e^{2\pi i\left\langle
\lambda,e^{-ad\left(  t_{1}-l_{1}\right)  A_{1}+\cdots+\left(  t_{m}%
-l_{m}\right)  A_{m}}Z^{\prime}\right\rangle }F\left(  \lambda,t-l\right)
e^{2\pi i\left\langle \beta\left(  \lambda,t\right)  ,Z\left(  k\right)
\right\rangle }dtd\mu\left(  \lambda\right)  .
\end{align*}
Finally,
\begin{align*}
\left[  J_{\mathbf{A}}\tau\left(  \exp Z\left(  k^{\prime}\right)  \right)
F\right]  \left(  \exp Z\left(  k\right)  \exp A\left(  l\right)  \right)   &
=\int_{\mathbf{A}}\int_{\left[  0,1\right)  ^{m}}e^{2\pi i\left\langle
\beta\left(  \lambda,t\right)  ,Z\left(  k\right)  +e^{A}Z\left(  k^{\prime
}\right)  \right\rangle }F\left(  \lambda,t-l\right)  dtd\mu\left(
\lambda\right)  \\
&  =JF\left(  \exp\left(  Z\left(  k\right)  +e^{adA}Z\left(  k^{\prime
}\right)  \right)  \exp A\left(  l\right)  \right)  \\
&  =\left[  R\left(  \exp Z\left(  k^{\prime}\right)  \right)  JF\right]
\left(  \exp Z\left(  k\right)  \exp A\left(  l\right)  \right)  .
\end{align*}
This completes the proof.
\end{proof}

\begin{lemma}
\label{isometry}If $\beta\left(  \mathbf{A}\times\left[  0,1\right)
^{m}\right)  $ has positive Lebesgue measure in $%
\mathbb{R}
^{p}$ and is contained in a fundamental domain of $\mathbb{Z}^{p}$ then
$J_{\mathbf{A}}$ defines an isometry on a dense subset of $L^{2}\left(
\mathbf{A}\times\mathbb{R}^{m},d\mu\left(  \lambda\right)  \right)  $ into
$l^{2}\left(  \Gamma\right)  $ which extends uniquely to an isometry of
$L^{2}\left(  \mathbf{A}\times\mathbb{R}^{m},d\mu\left(  \lambda\right)
\right)  $ into $l^{2}\left(  \Gamma\right)  .$
\end{lemma}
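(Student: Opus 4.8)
The plan is to verify the isometry identity $\|J_{\mathbf{A}}H\|_{l^{2}(\Gamma)}^{2}=\|H\|_{L^{2}(\mathbf{A}\times\mathbb{R}^{m},d\mu)}^{2}$ on the dense subspace of smooth functions of compact support (these are dense because $d\mu$ restricted to the bounded set $\mathbf{A}$ is a finite measure with bounded continuous density), and then to invoke the bounded linear extension theorem: a linear map that is isometric on a dense subspace is uniformly continuous and extends uniquely to an isometry of the whole space. By Proposition \ref{convergence} such an $H$ already satisfies $J_{\mathbf{A}}H\in l^{2}(\Gamma)$, so only the norm identity requires proof.

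First I would fix $l\in\mathbb{Z}^{m}$ and rewrite the defining integral as a single Fourier coefficient. The crucial point -- and the place where the hypothesis enters -- is to perform the change of variable $r=\beta(\lambda,t)$ \emph{directly} on the integral with $t$ ranging over $[0,1)^{m}$, rather than first substituting $s=t-l$ as in Proposition \ref{convergence}. By Lemma \ref{beta 1} the map $\beta$ is a diffeomorphism on $\mathbf{A}\times[0,1)^{m}$, and by the Jacobian computation in Lemma \ref{Plancherel measure} one has $\det J_{\beta}(\lambda,t)=\det J_{\beta}(\lambda,0)$, whence $d\mu(\lambda)\,dt=\left\vert\det J_{\beta}(\lambda,0)\right\vert d\lambda\,dt=dr$. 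Writing $H_{l}(\lambda,t)=H(\lambda,t-l)$, this yields
\[
\left[J_{\mathbf{A}}H\right]\left(\exp Z(k)\exp A(l)\right)=\int_{\beta\left(\mathbf{A}\times[0,1)^{m}\right)}\left(H_{l}\circ\beta^{-1}\right)(r)\,e^{2\pi i\left\langle r,Z(k)\right\rangle}\,dr,
\]
which I recognize as the $k$-th Fourier coefficient of $\Psi_{l}:=\left(H_{l}\circ\beta^{-1}\right)\cdot 1_{\beta(\mathbf{A}\times[0,1)^{m})}$, since $\left\langle r,Z(k)\right\rangle=\sum_{j}k_{j}r(Z_{j})=k\cdot r$ in the coordinates $r\mapsto(r(Z_{1}),\dots,r(Z_{p}))$ identifying $\mathfrak{p}^{\ast}$ with $\mathbb{R}^{p}$.

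Next I would exploit the hypothesis that $\beta(\mathbf{A}\times[0,1)^{m})$ is contained in a fundamental domain $\mathcal{D}$ of $\mathbb{Z}^{p}$: extending $\Psi_{l}$ by zero I may regard it as an element of $L^{2}(\mathbb{R}^{p}/\mathbb{Z}^{p})$ whose support meets each $\mathbb{Z}^{p}$-coset at most once, so that no folding occurs. Since $\left\{e^{2\pi i k\cdot r}\right\}_{k\in\mathbb{Z}^{p}}$ is an orthonormal basis of $L^{2}(\mathcal{D})$, Parseval's identity gives $\sum_{k\in\mathbb{Z}^{p}}\left\vert[J_{\mathbf{A}}H](\exp Z(k)\exp A(l))\right\vert^{2}=\|\Psi_{l}\|_{L^{2}(\mathcal{D})}^{2}$. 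Changing variables back through $\beta$, this equals $\int_{\mathbf{A}}\int_{[0,1)^{m}}\left\vert H(\lambda,t-l)\right\vert^{2}dt\,d\mu(\lambda)$. It is precisely the containment in a single fundamental domain that turns the inequality appearing in Proposition \ref{convergence} into an equality here.

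Finally I would sum over $l\in\mathbb{Z}^{m}$, which is legitimate for the nonnegative double sum over $(k,l)$ by Tonelli's theorem. Because the translates $\left\{[0,1)^{m}-l\right\}_{l\in\mathbb{Z}^{m}}$ tile $\mathbb{R}^{m}$, for each fixed $\lambda$ one has $\sum_{l}\int_{[0,1)^{m}}\left\vert H(\lambda,t-l)\right\vert^{2}dt=\int_{\mathbb{R}^{m}}\left\vert H(\lambda,s)\right\vert^{2}ds$, and hence
\[
\|J_{\mathbf{A}}H\|_{l^{2}(\Gamma)}^{2}=\sum_{l\in\mathbb{Z}^{m}}\sum_{k\in\mathbb{Z}^{p}}\left\vert[J_{\mathbf{A}}H](\exp Z(k)\exp A(l))\right\vert^{2}=\int_{\mathbf{A}}\int_{\mathbb{R}^{m}}\left\vert H(\lambda,s)\right\vert^{2}ds\,d\mu(\lambda)=\|H\|_{L^{2}(\mathbf{A}\times\mathbb{R}^{m},d\mu)}^{2}.
\]
Thus $J_{\mathbf{A}}$ is isometric on a dense subspace and extends uniquely to an isometry of $L^{2}(\mathbf{A}\times\mathbb{R}^{m},d\mu)$ into $l^{2}(\Gamma)$. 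I expect the main obstacle to be the bookkeeping in the second step: one must change variables while $t\in[0,1)^{m}$ \emph{before} shifting by $l$, so that the image stays inside the prescribed fundamental domain and Parseval applies without periodization; the entire gain of the hypothesis over Proposition \ref{convergence} lives in this single point.
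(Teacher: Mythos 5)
Your proposal is correct and follows essentially the same route as the paper's proof: change variables via the diffeomorphism $\beta$ on $\mathbf{A}\times[0,1)^{m}$ using $d\mu(\lambda)\,dt=\left\vert\det J_{\beta}(\lambda,0)\right\vert d\lambda\,dt$, recognize the values of $J_{\mathbf{A}}H$ as Fourier coefficients of a function supported in a single fundamental domain, apply Parseval there, and sum over $l$ using the tiling of $\mathbb{R}^{m}$ before extending by density. The only cosmetic difference is direction (you start from $\Vert J_{\mathbf{A}}H\Vert^{2}$ and the paper starts from $\Vert F\Vert^{2}$), and your version makes explicit the identification of $[J_{\mathbf{A}}H](\exp Z(k)\exp A(l))$ with a Fourier coefficient that the paper leaves implicit.
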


\begin{proof}
Let $F\in L^{2}\left(  \mathbf{A}\times\mathbb{R}^{m},d\mu\left(
\lambda\right)  \right)  .$ Furthermore, let us assume that $F$ is smooth with
compact support in $\mathbf{A}\times\mathbb{R}^{m}$. Computing the norm of
$F,$ we obtain%
\begin{align}
\left\Vert F\right\Vert _{L^{2}\left(  \mathbf{A}\times\mathbb{R}^{m}%
,d\mu\left(  \lambda\right)  \right)  }^{2}  &  =\int_{\mathbf{A}}%
\int_{\mathbb{R}^{m}}\left\vert F\left(  \lambda,t\right)  \right\vert
^{2}dt\text{ }d\mu\left(  \lambda\right) \nonumber\\
&  =\int_{\mathbf{A}}\sum_{l\in\mathbb{Z}^{m}}\int_{\left[  0,1\right)  ^{m}%
}\left\vert F\left(  \lambda,t-l\right)  \right\vert ^{2}dt\text{ }d\mu\left(
\lambda\right)  . \label{norm}%
\end{align}
Letting $G_{l}\left(  \lambda,t\right)  =F\left(  \lambda,t-l\right)  ,$
making the change of variable $s=\beta\left(  \lambda,t\right)  $ and using
the fact that $\beta$ is a diffeomorphism (see Lemma \ref{beta 1}) we obtain%
\begin{align*}
\int_{\mathbf{A}}\sum_{l\in\mathbb{Z}^{m}}\int_{\left[  0,1\right)  ^{m}%
}\left\vert F\left(  \lambda,t-l\right)  \right\vert ^{2}dt\text{ }d\mu\left(
\lambda\right)   &  =\sum_{l\in\mathbb{Z}^{m}}\int_{\left[  0,1\right)  ^{m}%
}\int_{\mathbf{A}}\left\vert G_{l}\left(  \lambda,t\right)  \right\vert
^{2}d\mu\left(  \lambda\right)  dt\text{ }\\
&  =\sum_{l\in\mathbb{Z}^{m}}\int_{\mathbf{A\times}\left[  0,1\right)  ^{m}%
}\left\vert G_{l}\left(  \lambda,t\right)  \right\vert ^{2}\left\vert \det
J_{\beta}\left(  \lambda,0\right)  \right\vert d\left(  \lambda,t\right)
\text{ }\\
&  =\sum_{l\in\mathbb{Z}^{m}}\int_{\beta\left(  \mathbf{A\times}\left[
0,1\right)  ^{m}\right)  }\left\vert G_{l}\beta^{-1}\left(  s\right)
\right\vert ^{2}\left\vert \det J_{\beta}\left(  \lambda,0\right)  \right\vert
d\left(  \beta^{-1}\left(  s\right)  \right) \\
&  =\sum_{l\in\mathbb{Z}^{m}}\int_{\beta\left(  \mathbf{A\times}\left[
0,1\right)  ^{m}\right)  }\left\vert G_{l}\beta^{-1}\left(  s\right)
\right\vert ^{2}ds.
\end{align*}
Since $\beta\left(  \mathbf{A}\times\left[  0,1\right)  ^{m}\right)  $ is
contained in a fundamental domain of $\mathbb{Z}^{p}$ then
\begin{align*}
\int_{\mathbf{A}}\sum_{l\in\mathbb{Z}^{m}}\int_{\left[  0,1\right)  ^{m}%
}\left\vert F\left(  \lambda,t-l\right)  \right\vert ^{2}dt\text{ }d\mu\left(
\lambda\right)   &  =\sum_{l\in\mathbb{Z}^{m}}\left\Vert s\mapsto G_{l}%
\beta^{-1}\left(  s\right)  \right\Vert ^{2}\\
&  =\sum_{l\in\mathbb{Z}^{m}}\left\Vert \mathfrak{F}_{\mathbb{R}%
^{p}/\mathbb{Z}^{p}}\left(  G_{l}\beta^{-1}\right)  \right\Vert ^{2}\\
&  =\sum_{l\in\mathbb{Z}^{m}}\sum_{k\in\mathbb{Z}^{p}}\left\vert
\mathfrak{F}_{\mathbb{R}^{p}/\mathbb{Z}^{p}}\left(  G_{l}\beta^{-1}\right)
\left(  k\right)  \right\vert ^{2}\\
&  =\sum_{l\in\mathbb{Z}^{m}}\sum_{k\in\mathbb{Z}^{p}}\left\vert \int
_{\beta\left(  \mathbf{A\times}\left[  0,1\right)  ^{m}\right)  }\left(
G_{l}\beta^{-1}\right)  \left(  s\right)  \exp\left(  2\pi i\left\langle
s,k\right\rangle \right)  ds\right\vert ^{2}.
\end{align*}
Thus
\begin{equation}
\left\Vert F\right\Vert _{L^{2}\left(  \mathbf{A}\times\mathbb{R}^{m}%
,d\mu\left(  \lambda\right)  \right)  }^{2}=\sum_{l\in\mathbb{Z}^{m}}%
\sum_{k\in\mathbb{Z}^{p}}\left\vert \int_{\beta\left(  \mathbf{A\times}\left[
0,1\right)  ^{m}\right)  }G_{l}\beta^{-1}\left(  s\right)  \exp\left(  2\pi
i\left\langle s,k\right\rangle \right)  ds\right\vert ^{2}.
\end{equation}
Next, since $\beta\left(  \mathbf{A\times}\left[  0,1\right)  ^{m}\right)  $
is contained in a fundamental domain of $\mathbb{Z}^{p}$, the trigonometric
system
\[
\left\{  \chi_{\beta\left(  \mathbf{A\times}\left[  0,1\right)  ^{m}\right)
}\left(  s\right)  \times\exp\left(  2\pi i\left\langle s,k\right\rangle
\right)  :k\in%
\mathbb{Z}
^{p}\right\}
\]
forms a Parseval frame in $L^{2}\left(  \beta\left(  \mathbf{A\times}\left[
0,1\right)  ^{m}\right)  \right)  .$ Clearly this is true because the
orthogonal projection of an orthonormal basis is always a Parseval frame.
Letting
\[
\widehat{G_{l}\beta^{-1}}=\mathfrak{F}_{L^{2}\left(  \beta\left(
\mathbf{A\times}\left[  0,1\right)  ^{m}\right)  \right)  }\left(  s\mapsto
G_{l}\left(  \beta^{-1}\left(  s\right)  \right)  \right)
\]
be the Fourier transform of the function
\[
s\mapsto G_{l}\left(  \beta^{-1}\left(  s\right)  \right)  \in L^{2}\left(
\beta\left(  \mathbf{A\times}\left[  0,1\right)  ^{m}\right)  \right)
\]
it follows that
\[
\sum_{l\in\mathbb{Z}^{m}}\sum_{k\in\mathbb{Z}^{p}}\left\vert \int
_{\beta\left(  \mathbf{A\times}\left[  0,1\right)  ^{m}\right)  }G_{l}\left(
\beta^{-1}\left(  s\right)  \right)  \exp\left(  2\pi i\left\langle
s,k\right\rangle \right)  ds\right\vert ^{2}=\sum_{l\in\mathbb{Z}^{m}}%
\sum_{k\in\mathbb{Z}^{p}}\left\vert \widehat{G_{l}\beta^{-1}}\left(  k\right)
\right\vert ^{2}.
\]
Next,
\[
\sum_{l\in\mathbb{Z}^{m}}\sum_{k\in\mathbb{Z}^{p}}\left\vert \widehat
{G_{l}\beta^{-1}}\left(  k\right)  \right\vert ^{2}=\sum_{l\in\mathbb{Z}^{m}%
}\left\Vert \widehat{G_{l}\beta^{-1}}\right\Vert _{l^{2}\left(  \mathbb{Z}%
^{p}\right)  }^{2}=\sum_{l\in\mathbb{Z}^{m}}\int_{\beta\left(  \mathbf{A\times
}\left[  0,1\right)  ^{m}\right)  }\left\vert G_{l}\left(  \beta^{-1}\left(
s\right)  \right)  \right\vert ^{2}ds.
\]
Now substituting $\left(  \lambda,t\right)  $ for $\beta^{-1}\left(  s\right)
$,%
\begin{align}
&  \sum_{l\in\mathbb{Z}^{m}}\sum_{k\in\mathbb{Z}^{p}}\left\vert \int
_{\beta\left(  \mathbf{A\times}\left[  0,1\right)  ^{m}\right)  }G_{l}\left(
\beta^{-1}\left(  s\right)  \right)  \exp\left(  2\pi i\left\langle
s,k\right\rangle \right)  ds\right\vert ^{2}\nonumber\\
&  =\sum_{l\in\mathbb{Z}^{m}}\int_{\mathbf{A}}\int_{\left[  0,1\right)  ^{m}%
}\left\vert G_{l}\left(  \lambda,t\right)  \right\vert ^{2}\left\vert \det
J_{\beta}\left(  \lambda,0\right)  \right\vert dt\text{ }d\lambda\\
&  =\sum_{l\in\mathbb{Z}^{m}}\int_{\mathbf{A}}\int_{\left[  0,1\right)  ^{m}%
}\left\vert F\left(  \lambda,t-l\right)  \right\vert ^{2}dt\text{ }d\mu\left(
\lambda\right)  . \label{topp}%
\end{align}
Equation (\ref{norm}) together with (\ref{topp}) gives
\[
\sum_{l\in\mathbb{Z}^{m}}\sum_{k\in\mathbb{Z}^{p}}\left\vert \int
_{\beta\left(  \mathbf{A\times}\left[  0,1\right)  ^{m}\right)  }G_{l}\left(
\beta^{-1}\left(  s\right)  \right)  \exp\left(  2\pi i\left\langle
s,k\right\rangle \right)  \text{ }ds\right\vert ^{2}=\left\Vert F\right\Vert
_{L^{2}\left(  \mathbf{A}\times\mathbb{R}^{m},d\mu\left(  \lambda\right)
\right)  }^{2}.
\]
Finally, we obtain
\[
\left\Vert F\right\Vert _{L^{2}\left(  \mathbf{A}\times\mathbb{R}^{m}%
,d\mu\left(  \lambda\right)  \right)  }=\left\Vert J_{\mathbf{A}}F\right\Vert
_{l^{2}\left(  \Gamma\right)  }.
\]
Now, since the set of continuous functions of compact support is dense in
$L^{2}\left(  \mathbf{A}\times\mathbb{R}^{m},d\mu\left(  \lambda\right)
\right)  $ and since $J$ defines an isometry on a dense set, then $J$ extends
uniquely to an isometry on $L^{2}\left(  \mathbf{A}\times\mathbb{R}^{m}%
,d\mu\left(  \lambda\right)  \right)  .$
\end{proof}

The proof given for Lemma \ref{isometry} can be easily modified to establish
the following result

\begin{lemma}
\label{unit}If $\beta\left(  \mathbf{A}\times\left[  0,1\right)  ^{m}\right)
$ has positive Lebesgue measure in $%
\mathbb{R}
^{p}$ and is equal to a fundamental domain of $\mathbb{Z}^{p}$ then
$J_{\mathbf{A}}$ defines an isometry on a dense subset of $L^{2}\left(
\mathbf{A}\times\mathbb{R}^{m},d\mu\left(  \lambda\right)  \right)  $ into
$l^{2}\left(  \Gamma\right)  $ which extends uniquely to a unitary map of
$L^{2}\left(  \mathbf{A}\times\mathbb{R}^{m},d\mu\left(  \lambda\right)
\right)  $ into $l^{2}\left(  \Gamma\right)  .$
\end{lemma}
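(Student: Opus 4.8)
The plan is to reuse the isometry computation of Lemma \ref{isometry} \emph{verbatim} and then supply the one additional ingredient—surjectivity—that is needed to promote the isometry to a unitary map. First I would note that the present hypothesis is strictly stronger than that of Lemma \ref{isometry}: if $\beta\left(  \mathbf{A}\times\left[  0,1\right)  ^{m}\right)  $ equals a fundamental domain of $\mathbb{Z}^{p}$ up to a null set, then in particular it is contained in one. Hence the chain of equalities in Lemma \ref{isometry} applies without change and already yields
\[
\left\Vert F\right\Vert _{L^{2}\left(  \mathbf{A}\times\mathbb{R}^{m},d\mu\left(  \lambda\right)  \right)  }=\left\Vert J_{\mathbf{A}}F\right\Vert _{l^{2}\left(  \Gamma\right)  }
\]
on the dense subspace of smooth compactly supported functions, so that $J_{\mathbf{A}}$ extends uniquely to an isometry into $l^{2}\left(  \Gamma\right)  $. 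It therefore remains only to prove that this isometry is onto.

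The decisive structural point is that the Parseval frame used in Lemma \ref{isometry} is now an honest orthonormal basis. Indeed, when $\beta\left(  \mathbf{A}\times\left[  0,1\right)  ^{m}\right)  $ is equal to a fundamental domain, the restriction map is the identity and the system $\left\{  s\mapsto\exp\left(  2\pi i\left\langle s,k\right\rangle \right)  :k\in\mathbb{Z}^{p}\right\}  $ is a complete orthonormal basis of $L^{2}\left(  \beta\left(  \mathbf{A}\times\left[  0,1\right)  ^{m}\right)  \right)  $, not merely the overcomplete projected frame obtained from a proper subset. Consequently the Fourier coefficient operator $\mathfrak{F}_{\mathbb{R}^{p}/\mathbb{Z}^{p}}$, restricted to this set, is a \emph{surjection} onto $l^{2}\left(  \mathbb{Z}^{p}\right)  $.

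To establish surjectivity of $J_{\mathbf{A}}$, I would fix an arbitrary sequence $c=\left(  c_{k,l}\right)  _{(k,l)\in\mathbb{Z}^{p}\times\mathbb{Z}^{m}}\in l^{2}\left(  \Gamma\right)  $ and invert the construction layer by layer in $l$. For each fixed $l\in\mathbb{Z}^{m}$ the slice $\left(  c_{k,l}\right)  _{k\in\mathbb{Z}^{p}}$ lies in $l^{2}\left(  \mathbb{Z}^{p}\right)  $, so by completeness of the orthonormal system above there is a unique $\psi_{l}\in L^{2}\left(  \beta\left(  \mathbf{A}\times\left[  0,1\right)  ^{m}\right)  \right)  $ with $\widehat{\psi_{l}}\left(  k\right)  =c_{k,l}$ for every $k$. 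I would then transport each $\psi_{l}$ back through the diffeomorphism $\beta$ (Lemma \ref{beta 1}) by setting $F\left(  \lambda,t-l\right)  =\psi_{l}\left(  \beta\left(  \lambda,t\right)  \right)  $ for $t\in\left[  0,1\right)  ^{m}$; since the slabs $\left\{  \left[  0,1\right)  ^{m}-l:l\in\mathbb{Z}^{m}\right\}  $ tile $\mathbb{R}^{m}$, this defines $F$ on all of $\mathbf{A}\times\mathbb{R}^{m}$. Running the change of variables of Lemma \ref{isometry} in reverse—using that $\left\vert \det J_{\beta}\left(  \lambda,0\right)  \right\vert $ is independent of $t$ (Lemma \ref{Plancherel measure})—gives
\[
\left\Vert F\right\Vert _{L^{2}\left(  \mathbf{A}\times\mathbb{R}^{m},d\mu\left(  \lambda\right)  \right)  }^{2}=\sum_{l\in\mathbb{Z}^{m}}\left\Vert \psi_{l}\right\Vert _{L^{2}}^{2}=\sum_{l\in\mathbb{Z}^{m}}\sum_{k\in\mathbb{Z}^{p}}\left\vert c_{k,l}\right\vert ^{2}=\left\Vert c\right\Vert _{l^{2}\left(  \Gamma\right)  }^{2}<\infty,
\]
so $F$ is a genuine element of the domain, and unwinding the definition of $J_{\mathbf{A}}$ shows $J_{\mathbf{A}}F=c$. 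This exhibits $c$ in the range and proves surjectivity; together with the isometry property, $J_{\mathbf{A}}$ is unitary.

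I expect the main obstacle to be the bookkeeping in this inverse construction rather than any analytic difficulty: one must verify that the layer-wise definitions glue into a single measurable function $F$ over the tiling $\left\{  \left[  0,1\right)  ^{m}-l\right\}  $, and that the backward change of variables is legitimate. Both reduce to the constancy of the Jacobian in $t$ and to the fact that $\beta$ is a diffeomorphism, which are already in hand. Every other step is the mirror image of the forward estimate of Lemma \ref{isometry}, with the Parseval frame there replaced by an orthonormal basis here; the passage from ``contained in'' to ``equal to'' is precisely what turns the isometric embedding into a surjection.
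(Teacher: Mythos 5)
Your proposal is correct and follows exactly the route the paper intends: the paper gives no separate proof of Lemma \ref{unit}, merely asserting that the argument of Lemma \ref{isometry} "can be easily modified," and your modification — reusing the isometry computation verbatim and then obtaining surjectivity from the fact that the exponentials form a complete orthonormal basis (rather than a proper Parseval frame) of $L^{2}\left(\beta\left(\mathbf{A}\times\left[0,1\right)^{m}\right)\right)$ when that set fills out a fundamental domain — is the natural completion. Your explicit layer-by-layer construction of the preimage of an arbitrary sequence in $l^{2}\left(\Gamma\right)$ actually supplies more detail than the paper does, and it is sound.
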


\begin{remark}
\label{construction of admissible vectors}Suppose that $\beta\left(
\mathbf{A}\times\left[  0,1\right)  ^{m}\right)  $ has positive Lebesgue
measure in $%
\mathbb{R}
^{p}$ and is contained in a fundamental domain of $\mathbb{Z}^{p}.$ We have
shown that $J_{\mathbf{A}}\ $is an isometry. Now, let $\Phi$ be the orthogonal
projection of $l^{2}\left(  \Gamma\right)  $ onto the Hilbert space
$J_{\mathbf{A}}\left(  L^{2}\left(  \mathbf{A}\times\mathbb{R}^{m},d\mu\left(
\lambda\right)  \right)  \right)  $ and let $\kappa$ be the indicator sequence
of the singleton containing the identity element in $\Gamma.$ Identifying
$L^{2}\left(  \mathbf{A}\times\mathbb{R}^{m},d\mu\left(  \lambda\right)
\right)  $ with $\mathcal{P}\left(  \mathbf{H}_{\mathbf{A}}\right)  ,$ it is
clear that $\mathcal{P}^{-1}\left(  J_{\mathbf{A}}^{\ast}\left(  \Phi
\kappa\right)  \right)  \in\mathbf{H}_{\mathbf{A}}\subset L^{2}\left(
N\right)  $ and $L\left(  \Gamma\right)  \left(  \mathcal{P}^{-1}\left(
J_{\mathbf{A}}^{\ast}\left(  \Phi\kappa\right)  \right)  \right)  $ is a
Parseval frame for the band-limited Hilbert space $\mathbf{H}_{\mathbf{A}}$.
We remark that the vector $\kappa$ could be replaced by any other vector which
generates an orthonormal basis or a Parseval frame under the action of the
right regular representation of $\Gamma.$
\end{remark}

\section{Proof of Main Results}

\label{mainresults}

\subsection{Proof of Theorem \ref{Main 2}}

First, we observe that the right regular and left regular representations of
$\Gamma$ are unitarily equivalent (\cite{Folland}, Page $69$). To prove Part
$1,$ we appeal to Lemma \ref{isometry}, and Lemma \ref{intertwines}. Assuming
that $\beta\left(  \mathbf{A}\times\left[  0,1\right)  ^{m}\right)  $ has
positive Lebesgue measure in $\mathbb{R}^{p}$ and is contained in a
fundamental domain of $\mathbb{Z}^{p}$, the restriction of the representation
$\left(  L,\mathbf{H}_{\mathbf{A}}\right)  $ to the discrete group $\Gamma$ is
equivalent to a subrepresentation of the left regular representation of
$\Gamma.$ Appealing to Lemma \ref{sampling}, there exists a vector $\eta$ such
that $V_{\eta}\left(  \mathbf{H}_{\mathbf{A}}\right)  $ is a sampling space
with respect to $\Gamma.$ In fact, Remark
\ref{construction of admissible vectors} describes how to construct $\eta.$
For Part $2,$ Lemma \ref{isometry}, Lemma \ref{intertwines} together with the
assumption that $\beta\left(  \mathbf{A}\times\left[  0,1\right)  ^{m}\right)
$ is equal to a fundamental domain of $\mathbb{Z}^{p}$ imply that the
restriction of the representation $\left(  L,\mathbf{H}_{\mathbf{A}}\right)  $
to the discrete group $\Gamma$ is equivalent to the left regular
representation of $\Gamma.$ Finally, Remark
\ref{construction of admissible vectors} shows how to construct a vector
$\eta\in\mathbf{H}_{\mathbf{A}}$ such that $V_{\eta}^{L}\left(  \mathbf{H}%
_{\mathbf{A}}\right)  $ is a left-invariant subspace of $L^{2}\left(
N\right)  $ which is a sampling space with the interpolation property with
respect to $\Gamma.$ This completes the proof.

\subsection{Proof of Corollary \ref{Main1}}

For $s=\left(  s_{1},\cdots,s_{m}\right)  \in\mathbb{R}^{m},$ let $A\left(
s\right)  =s_{1}A_{1}+\cdots+s_{m}A_{m}\in\mathfrak{m}.$ Since the linear
operators $adA_{1},\cdots,adA_{m}$ are pairwise commutative and nilpotent,
since $e^{-adA\left(  s\right)  |\mathfrak{p}}$ is unipotent, there is a unit
vector which is an eigenvector for $e^{-adA\left(  s\right)  |\mathfrak{p}}$
with corresponding eigenvalue $1.$ So, it is clear that $\left\Vert \left[
e^{-adA\left(  s\right)  |\mathfrak{p}}\right]  ^{T}\right\Vert _{\infty}%
\geq1$ and
\[
\sup\left\{  \left\Vert \left[  e^{-adA\left(  s\right)  |\mathfrak{p}%
}\right]  ^{T}\right\Vert _{\infty}:s\in\mathbf{E}\right\}  \geq1
\]
for any nonempty $\mathbf{E}\subseteq\mathbb{R}^{m}.$ We recall again that%
\begin{equation}
\mathfrak{P}\left(  A(s)\right)  =\left[  e^{\left.  ad\left(  -\sum_{j=1}%
^{m}s_{j}A_{j}\right)  \right\vert \mathfrak{p}}\right]  ^{T}.\label{Mat}%
\end{equation}

\begin{lemma}
\label{betta 2}Let $\mathbf{E}$ be an open bounded subset of $\mathbb{R}^{m}.$
If $\varepsilon$ is a positive number satisfying
\[
\varepsilon\leq\delta=\left(  2\sup\left\{  \left\Vert \mathfrak{P}\left(
A(s)\right)  \right\Vert _{\infty}:s\in\mathbf{E}\right\}  \right)  ^{-1}%
\]
then $\beta\left(  \left(  \left(  -\varepsilon,\varepsilon\right)
^{\dim\Sigma}\cap\Sigma\right)  \times\mathbf{E}\right)  $ is open in
$\mathbb{R}^{p}$ and is contained in a fundamental domain of $\mathbb{Z}^{p}$.
\end{lemma}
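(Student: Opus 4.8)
The plan is to establish the two assertions—openness of the image and containment in a fundamental domain—independently. Openness follows at once from Lemma \ref{beta 1}, which asserts that $\beta$ is a diffeomorphism from $\Sigma\times\mathbb{R}^m$ onto its range and is in particular an open map. Since $\mathbf{E}$ is open in $\mathbb{R}^m$ and $\left(-\varepsilon,\varepsilon\right)^{\dim\Sigma}\cap\Sigma$ is open in $\Sigma$ (being the intersection of an open cube with the Zariski open set $\Sigma$), the product $\left(\left(-\varepsilon,\varepsilon\right)^{\dim\Sigma}\cap\Sigma\right)\times\mathbf{E}$ is open in $\Sigma\times\mathbb{R}^m$, and therefore its $\beta$-image is open in $\mathbb{R}^p$.

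For the containment I would argue by a direct max-norm estimate. Recall from Remark \ref{action of P} and (\ref{Mat}) that, writing $\lambda=\sum_{k=1}^{p}f_k Z_k^{\ast}$ and $f=\left(f_1,\dots,f_p\right)^{T}$, one has $\beta\left(\lambda,s\right)\equiv\mathfrak{P}\left(A(s)\right)f$. The crucial observation is that when $\lambda\in\left(-\varepsilon,\varepsilon\right)^{\dim\Sigma}\cap\Sigma$, the full vector $f\in\mathbb{R}^p$ satisfies $\left\Vert f\right\Vert_{\max}<\varepsilon$: the coordinates $f_k$ with $k\in\mathbf{e}$ vanish identically on the cross-section $\Sigma$, while the $\dim\Sigma=n-2m$ remaining free coordinates are each bounded by $\varepsilon$ by hypothesis. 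Since $\left\Vert\cdot\right\Vert_{\infty}$ is the operator norm subordinate to the max-norm, for every $s\in\mathbf{E}$ I obtain
\[
\left\Vert\beta\left(\lambda,s\right)\right\Vert_{\max}=\left\Vert\mathfrak{P}\left(A(s)\right)f\right\Vert_{\max}\leq\left\Vert\mathfrak{P}\left(A(s)\right)\right\Vert_{\infty}\left\Vert f\right\Vert_{\max}<\varepsilon\sup_{s'\in\mathbf{E}}\left\Vert\mathfrak{P}\left(A(s')\right)\right\Vert_{\infty}\leq\frac{1}{2},
\]
where the last inequality uses $\varepsilon\leq\delta=\left(2\sup_{s'\in\mathbf{E}}\left\Vert\mathfrak{P}\left(A(s')\right)\right\Vert_{\infty}\right)^{-1}$.

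It then follows that $\beta\left(\left(\left(-\varepsilon,\varepsilon\right)^{\dim\Sigma}\cap\Sigma\right)\times\mathbf{E}\right)\subseteq\left(-\tfrac12,\tfrac12\right)^{p}$, and the open cube $\left(-\tfrac12,\tfrac12\right)^{p}$ is plainly contained in the fundamental domain $\left[-\tfrac12,\tfrac12\right)^{p}$ of $\mathbb{Z}^{p}$, which gives the second claim.

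This argument is essentially a bookkeeping exercise, so I do not expect a serious obstacle; the two points that require care are that the constrained coordinates $f_k$ (for $k\in\mathbf{e}$) vanish, so that $\left\Vert f\right\Vert_{\max}$—and not merely the norm of the free part—is controlled by $\varepsilon$, and that the supremum defining $\delta$ is finite and positive, which follows from the continuity of $s\mapsto\left\Vert\mathfrak{P}\left(A(s)\right)\right\Vert_{\infty}$ together with the boundedness of $\mathbf{E}$.
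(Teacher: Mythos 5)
Your proposal is correct and follows essentially the same route as the paper: openness via the diffeomorphism property of $\beta$ from Lemma \ref{beta 1}, and containment via the subordinate-norm estimate $\left\Vert \mathfrak{P}\left(A(s)\right)f\right\Vert _{\max}\leq\left\Vert \mathfrak{P}\left(A(s)\right)\right\Vert _{\infty}\left\Vert f\right\Vert _{\max}<\tfrac{1}{2}$, placing the image inside $\left(-\tfrac{1}{2},\tfrac{1}{2}\right)^{p}$. Your explicit remark that the coordinates $f_{k}$ with $k\in\mathbf{e}$ vanish on $\Sigma$, so that the full vector $f\in\mathbb{R}^{p}$ (not just its free part) has max-norm below $\varepsilon$, is a point the paper leaves implicit.
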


\begin{proof}
Since the map $\beta$ is a diffeomorphism (see Lemma \ref{beta 1}) and since
the set
\[
\left(  \left(  -\varepsilon,\varepsilon\right)  ^{\dim\Sigma}\cap
\Sigma\right)  \times\mathbf{E}%
\]
is an open set in $\Sigma\times\mathbb{R}^{m}$, it is clear that its image
under the map $\beta$ is also open in $\mathbb{R}^{p}$. Next, it remains to
show that it is possible to find a positive real number $\delta$ such that if
$0<\varepsilon\leq\delta$ then $\beta\left(  \left(  \left(  -\varepsilon
,\varepsilon\right)  ^{\dim\Sigma}\cap\Sigma\right)  \times\mathbf{E}\right)
$ is an open set contained in a fundamental domain of $\mathbb{Z}^{p}.$ Let
$\lambda\in\Sigma.$ Then there exists a linear functional $f$ in the dual of
the ideal $\mathfrak{p}$ such that
\[
\iota\left(  \lambda\right)  =\left[
\begin{array}
[c]{c}%
f\\
0
\end{array}
\right]
\]
and
\begin{equation}
\iota\left(  \exp\left(  \sum_{j=1}^{m}s_{j}A_{j}\right)  \cdot\lambda\right)
=\left[
\begin{array}
[c]{c}%
\mathfrak{P}\left(  A(s)\right)  f\\
0
\end{array}
\right]  . \label{homog}%
\end{equation}
Moreover, it is worth noting that
\[
\left\Vert \exp\left(  \sum_{j=1}^{m}s_{j}A_{j}\right)  \cdot\lambda
\right\Vert _{\max}=\left\Vert \mathfrak{P}\left(  A(s)\right)  f\right\Vert
_{\max}.
\]
Let $\delta$ be a positive real number defined as follows:%
\begin{equation}
\delta=\left(  2\sup\left\{  \left\Vert \mathfrak{P}\left(  A(s)\right)
\right\Vert _{\infty}:s\in\mathbf{E}\right\}  \right)  ^{-1}. \label{delta}%
\end{equation}
If $f\in\left(  -\varepsilon,\varepsilon\right)  ^{\dim\Sigma}\subseteq\left(
-\delta,\delta\right)  ^{\dim\Sigma}$ and if $s\in\mathbf{E}$ then
\begin{align*}
\left\Vert \mathfrak{B}(A(s))f\right\Vert _{\infty}  &  \leq\left\Vert
f\right\Vert _{\max}\times\sup\left\{  \left\Vert \mathfrak{P}\left(
A(s)\right)  \right\Vert _{\infty}:s\in\mathbf{E}\right\} \\
&  =\frac{1}{2}\times\frac{\left\Vert f\right\Vert _{\max}}{\delta}.
\end{align*}
Now since $\left\Vert f\right\Vert _{\max}<\delta$, it follows that
\[
\left\Vert \mathfrak{P}\left(  A(s)\right)  f\right\Vert _{\max}<\frac{1}{2}.
\]
As a result,
\[
\beta\left(  \left(  \left(  -\varepsilon,\varepsilon\right)  ^{\dim\Sigma
}\cap\Sigma\right)  \times\mathbf{E}\right)  \subseteq\left(  -\frac{1}%
{2},\frac{1}{2}\right)  ^{p}%
\]
and clearly $\left(  -\frac{1}{2},\frac{1}{2}\right)  ^{p}$ is contained in a
fundamental domain of $%
\mathbb{Z}
^{p}.$
\end{proof}

Appealing to Lemma \ref{isometry}, and Lemma \ref{betta 2} the following is immediate

\begin{proposition}
\label{isometry delta}If
\[
0<\varepsilon\leq\delta=\frac{1}{2\sup\left\{  \left\Vert \mathfrak{P}\left(
A(s)\right)  \right\Vert _{\infty}:s\in\mathbf{E}\right\}  }%
\]
then $J_{\left(  -\varepsilon,\varepsilon\right)  ^{n-2m}\cap\Sigma}$ defines
an isometry between $L^{2}\left(  \left(  \left(  -\varepsilon,\varepsilon
\right)  ^{n-2m}\cap\Sigma\right)  \times\mathbb{R}^{m},d\mu\left(
\lambda\right)  \right)  $ and $l^{2}\left(  \Gamma\right)  $.
\end{proposition}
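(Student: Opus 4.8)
The plan is to recognize this proposition as a direct composition of the two preceding lemmas, with only minor bookkeeping needed to align their hypotheses. First I would set $\mathbf{A}=(-\varepsilon,\varepsilon)^{n-2m}\cap\Sigma$ and record that $n-2m=\dim\Sigma$: since the Malcev basis $\{Z_1,\dots,Z_p,A_1,\dots,A_m\}$ gives $n=p+m$, we have $n-2m=p-m=\dim\Sigma$, so the cube $(-\varepsilon,\varepsilon)^{n-2m}$ appearing here is precisely the cube $(-\varepsilon,\varepsilon)^{\dim\Sigma}$ featured in Lemma \ref{betta 2}. The strategy is then to verify the two hypotheses of Lemma \ref{isometry} for this $\mathbf{A}$ — namely that $\beta(\mathbf{A}\times[0,1)^m)$ has positive Lebesgue measure in $\mathbb{R}^p$ and is contained in a fundamental domain of $\mathbb{Z}^p$ — and to read off the isometry conclusion.

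For the containment in a fundamental domain, I would invoke Lemma \ref{betta 2} with the open bounded set $\mathbf{E}$ taken to contain $[0,1)^m$. Since the standing hypothesis $0<\varepsilon\leq\delta=(2\sup\{\|\mathfrak{P}(A(s))\|_\infty:s\in\mathbf{E}\})^{-1}$ is exactly the assumption of that lemma, Lemma \ref{betta 2} yields $\beta(\mathbf{A}\times\mathbf{E})\subseteq(-\tfrac12,\tfrac12)^p$, so a fortiori $\beta(\mathbf{A}\times[0,1)^m)$ lies in the fundamental domain $(-\tfrac12,\tfrac12)^p$ of $\mathbb{Z}^p$. For the positivity of measure, I would note that $\Sigma$ is Zariski open, hence dense in $\mathbb{R}^{\dim\Sigma}$, so $\mathbf{A}=(-\varepsilon,\varepsilon)^{\dim\Sigma}\cap\Sigma$ is a nonempty Euclidean-open set and therefore has positive Lebesgue measure; because $\beta$ is a diffeomorphism by Lemma \ref{beta 1}, its image $\beta(\mathbf{A}\times[0,1)^m)$ has positive Lebesgue measure as well.

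With both hypotheses of Lemma \ref{isometry} confirmed, that lemma immediately produces the desired isometry of $L^2(\mathbf{A}\times\mathbb{R}^m,d\mu(\lambda))$ into $l^2(\Gamma)$, which is exactly the assertion, so the argument closes.

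The only genuine friction I anticipate is the cosmetic mismatch between the half-open cube $[0,1)^m$ relevant to the map $J_{\mathbf{A}}$ and the open set $\mathbf{E}$ required in Lemma \ref{betta 2}. This is harmless: by continuity the supremum of $\|\mathfrak{P}(A(s))\|_\infty$ over $[0,1)^m$ coincides with that over the compact cube $[0,1]^m$, so one may take any open bounded $\mathbf{E}\supseteq[0,1]^m$ without altering $\delta$, or simply rerun the elementary estimate $\|\mathfrak{P}(A(s))f\|_{\max}\leq\|f\|_{\max}\,\sup_s\|\mathfrak{P}(A(s))\|_\infty<\tfrac12$ from the proof of Lemma \ref{betta 2} directly on $[0,1)^m$. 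No substantive obstacle arises; as the surrounding text indicates, the statement is immediate once the hypotheses of the two lemmas are matched.
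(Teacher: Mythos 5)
Your proposal is correct and follows essentially the same route as the paper, which likewise derives the proposition immediately by combining Lemma \ref{betta 2} (to place $\beta\left(  \mathbf{A}\times\left[  0,1\right)  ^{m}\right)$ inside a fundamental domain of $\mathbb{Z}^{p}$) with Lemma \ref{isometry} (to conclude that $J_{\mathbf{A}}$ is an isometry). Your additional remarks on $n-2m=\dim\Sigma$, the positivity of the measure via openness and the diffeomorphism $\beta$, and the harmless replacement of $\left[  0,1\right)  ^{m}$ by an open bounded $\mathbf{E}$ are sound and merely make explicit what the paper leaves implicit.
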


\subsubsection{Proof of Corollary \ref{Main1}}

Let $\delta$ be a positive number defined by
\begin{equation}
\delta=\frac{1}{2\sup\left\{  \left\Vert \mathfrak{P}\left(  A(s)\right)
\right\Vert _{\infty}:s\in\left[  0,1\right)  ^{m}\right\}  }.
\end{equation}
We want to show that for $\varepsilon\in\left(  0,\delta\right]  $ there
exists a band-limited vector
\[
\eta=\eta^{\varepsilon}\in\mathbf{H}_{\left(  -\varepsilon,\varepsilon\right)
^{n-2m}}%
\]
such that the Hilbert space $V_{\eta}^{L}\left(  \mathbf{H}_{\left(
-\varepsilon,\varepsilon\right)  ^{n-2m}}\right)  $ is a left-invariant
subspace of $L^{2}\left(  N\right)  $ which is a sampling space with respect
to $\Gamma.$ According to Lemma \ref{betta 2} the set
\[
\beta\left(  \left(  \left(  -\varepsilon,\varepsilon\right)  ^{\dim\Sigma
}\cap\Sigma\right)  \times\left[  0,1\right)  ^{m}\right)
\]
is open in $%
\mathbb{R}
^{p}$ and is contained in a fundamental domain of $\mathbb{Z}^{p}$. The
desired result follows immediately from Theorem \ref{Main 2}.

\subsection{Proof of Example \ref{dimfour} Part $1$}

The case of commutative simply connected, and connected nilpotent Lie group is
already known to be true. Thus, to prove this result, it remains to focus on
the non commutative algebras. According to the classification of
four-dimensional nilpotent Lie algebras \cite{Goze} there are three distinct
cases to consider. Indeed if $\mathfrak{n}$ is a non-commutative nilpotent Lie
algebra of dimension three, then $\mathfrak{n}$ must be isomorphic with the
three-dimensional Heisenberg Lie algebra. If $\mathfrak{n}$ is
four-dimensional then up to isomorphism either $\mathfrak{n}$ is the direct
sum of the Heisenberg Lie algebra with a one-dimensional algebra, or there is
a basis $Z_{1},Z_{2},Z_{3},A_{1}$ for $\mathfrak{n}$ with the following
non-trivial Lie brackets
\[
\left[  A_{1},Z_{2}\right]  =2Z_{1},\left[  A_{1},Z_{3}\right]  =2Z_{2}.
\]

\vskip0.2cm\noindent\textbf{Case} $1$ (The Heisenberg Lie algebra) Let $N$ be
the simply connected, connected Heisenberg Lie group with Lie algebra
$\mathfrak{n}$ which is spanned by $Z_{1},Z_{2},A_{1}$ with non-trivial Lie
brackets $\left[  A_{1},Z_{2}\right]  =Z_{1}.$We check that $N=PM$ where
$P=\exp\left(  \mathbb{R}Z_{1}+\mathbb{R}Z_{2}\right)  $ and $M=\exp\left(
\mathbb{R}A_{1}\right)  .$ Put
\[
\Gamma=\exp\left(  \mathbb{Z}Z_{1}\right)  \exp\left(  \mathbb{Z}Z_{2}\right)
\exp\left(  \mathbb{Z}A_{1}\right)  .
\]
It is easily checked that
\[
\mathbf{M}\left(  \lambda\right)  =\left[
\begin{array}
[c]{ccc}%
0 & 0 & 0\\
0 & 0 & -\lambda\left(  Z_{1}\right) \\
0 & \lambda\left(  Z_{1}\right)  & 0
\end{array}
\right]  .
\]
Next, since
\[
\mathbf{e}\left(  \lambda\right)  =\left\{
\begin{array}
[c]{c}%
\emptyset\text{ if }\lambda\left(  Z_{1}\right)  =0\\
\left\{  2,3\right\}  \text{ if }\lambda\left(  Z_{1}\right)  \neq0
\end{array}
\right.
\]
we obtain that $\mathbf{e=}\left\{  2,3\right\}  .$ It follows that
$\Omega_{\mathbf{e}}=\left\{  \lambda\in\mathfrak{n}^{\ast}:\lambda\left(
Z_{1}\right)  \neq0\right\}  .$ Next, the unitary dual of $N$ is parametrized
by $\Sigma=\left\{  \lambda\in\Omega_{\mathbf{e}}:\lambda\left(  Z_{2}\right)
=\lambda\left(  A_{1}\right)  =0\right\}  $ which we identify with the
punctured line: $\mathbb{R}^{\ast}.$ It is not hard to check that
\[
\delta^{-1}=2\sup\left\{  \left\Vert \left[
\begin{array}
[c]{cc}%
1 & 0\\
-s & 1
\end{array}
\right]  \right\Vert _{\infty}:s\in\left[  0,1\right)  \right\}  =4.
\]
So, there exists a band-limited vector $\eta\in\mathbf{H}_{\left(  -\frac
{1}{4},\frac{1}{4}\right)  }$ such that $V_{\eta}^{L}\left(  \mathbf{H}%
_{\left(  -\frac{1}{4},\frac{1}{4}\right)  }\right)  $ is a sampling space
with respect to $\Gamma.$

\vskip0.2cm \noindent\ To prove that the Heisenberg group admits sampling
spaces with the interpolation property with respect to $\Gamma,$ we claim that
the set
\[
B(1)=\beta\left(  \left(  -1,1\right)  \times\left[  0,1\right)  \right)
=\left\{  \left[
\begin{array}
[c]{c}%
f\\
-sf
\end{array}
\right]  :f\in\left(  -1,1\right)  ,s\in\left[  0,1\right)  \right\}
\]
is up to a null set equal to a fundamental domain of $\mathbb{Z}^{2}$ (see
illustration below)

\begin{center}

\includegraphics[scale=0.7]{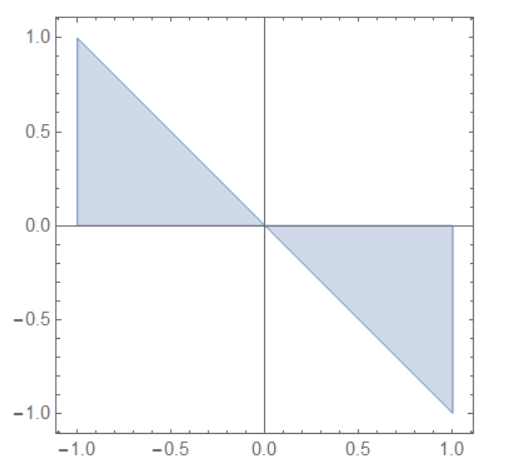}\newline The set $B(1)$
\end{center}

To prove this we write
\[
\beta\left(  \left(  -1,1\right)  \times\left[  0,1\right)  \right)
=\beta\left(  \left(  0,1\right)  \times\left[  0,1\right)  \right)  \cup
\beta\left(  \left(  -1,0\right)  \times\left[  0,1\right)  \right)  .
\]
Next, it is easy to check that
\[
\left(  \beta\left(  \left(  0,1\right)  \times\left[  0,1\right)  \right)
+\left[
\begin{array}
[c]{c}%
1\\
0
\end{array}
\right]  \right)  \cup\left(  \beta\left(  \left(  -1,0\right)  \times\left[
0,1\right)  \right)  +\left[
\begin{array}
[c]{c}%
0\\
1
\end{array}
\right]  \right)
\]
is up to a null set equal to the unit square $\left[  0,1\right)  ^{2}.$ Thus
the set $\beta\left(  \left(  -1,1\right)  \times\left[  0,1\right)  \right)
$ is up to a null set equal to a fundamental domain of $\mathbb{Z}^{2}.$
Appealing to Theorem \ref{Main 2}, the following result confirms the work
proved in \cite{Currey}\cite{oussa2}. There exists a band-limited vector
$\eta\in\mathbf{H}_{\left(  -1,1\right)  }$ such that $V_{\eta}^{L}\left(
\mathbf{H}_{\left(  -1,1\right)  }\right)  $ is a sampling space with respect
to $\Gamma$ which also enjoys the \textbf{interpolation property}.

\vskip0.2cm\noindent\textbf{Case} $2$ (Four-dimensional and step two) Assume
that $\mathfrak{n}$ is the direct sum of the Heisenberg Lie algebra with
$\mathbb{R}.$ That is $\mathfrak{n}$ which is spanned by $Z_{1},Z_{2}%
,Z_{3},A_{1}$ with non-trivial Lie brackets $\left[  A_{1},Z_{2}\right]
=Z_{1}.$ We check that
\[
\mathbf{M}\left(  \lambda\right)  =\left[
\begin{array}
[c]{cccc}%
0 & 0 & 0 & 0\\
0 & 0 & 0 & -\left(  Z_{1}\right) \\
0 & 0 & 0 & 0\\
0 & \lambda\left(  Z_{1}\right)  & 0 & 0
\end{array}
\right]
\]
and
\[
\mathbf{e}\left(  \lambda\right)  =\left\{
\begin{array}
[c]{c}%
\emptyset\text{ if }\lambda\left(  Z_{1}\right)  =0\\
\left\{  2,4\right\}  \text{ if }\lambda\left(  Z_{1}\right)  \neq0
\end{array}
\right.  .
\]
Fix $\mathbf{e=}\left\{  2,4\right\}  $ such that $\Omega_{\mathbf{e}%
}=\left\{  \lambda\in\mathfrak{n}^{\ast}:\lambda\left(  Z_{1}\right)
\neq0\right\}  $ and the unitary dual of $N$ is parametrized by
\[
\Sigma=\left\{  \lambda\in\Omega_{\mathbf{e}}:\lambda\left(  Z_{2}\right)
=\lambda\left(  A_{1}\right)  =0\right\}  .
\]
For any linear functional $\lambda\in\Sigma,$ the ideal spanned by
$Z_{1},Z_{2},Z_{3}$ is a polarization algebra subordinated to $\lambda$ and
\[
\delta=\left(  2\sup\left\{  \left\Vert \left[
\begin{array}
[c]{ccc}%
1 & 0 & 0\\
-s & 1 & 0\\
0 & 0 & 1
\end{array}
\right]  \right\Vert _{\infty}:s\in\left[  0,1\right)  \right\}  \right)
^{-1}=\frac{1}{4}.
\]

\vskip0.2cm\noindent\textbf{Case} $3$ (Four-dimensional and three step) Assume
that $\mathfrak{n}$ is a four-dimensional $Z_{1},Z_{2},Z_{3},A_{1}$ such that%
\[
\left[  A_{1},Z_{2}\right]  =2Z_{1},\left[  A_{1},Z_{3}\right]  =2Z_{2}.
\]
With respect to the ordered basis $Z_{1},Z_{2},Z_{3},$ we have
\[
\left.  \left[  adA_{1}\right]  \right\vert \mathfrak{p}=\left[
\begin{array}
[c]{ccc}%
0 & 2 & 0\\
0 & 0 & 2\\
0 & 0 & 0
\end{array}
\right]  \text{ and }\exp\left.  \left[  adA_{1}\right]  \right\vert
\mathfrak{p}=\left[
\begin{array}
[c]{ccc}%
1 & 0 & 2\\
0 & 1 & 0\\
0 & 0 & 1
\end{array}
\right]  .
\]
Next, we check that
\begin{align*}
\delta &  =\left(  2\sup\left\{  \left\Vert \left[
\begin{array}
[c]{ccc}%
1 & 0 & 0\\
-2s & 1 & 0\\
2s^{2} & -2s & 1
\end{array}
\right]  \right\Vert _{\infty}:s\in\left[  0,1\right)  \right\}  \right)
^{-1}\\
&  =\frac{1}{2}\left(  \max\left\{  1,1+2\left\vert s\right\vert
,1+2\left\vert s\right\vert +2\left\vert s\right\vert ^{2}:s\in\left[
0,1\right)  \right\}  \right)  ^{-1}=\frac{1}{10}.
\end{align*}
Indeed, the set%
\begin{align*}
\beta\left(  \left(  -\frac{1}{10},\frac{1}{10}\right)  ^{2}\times\left[
0,1\right)  \right)   &  =\left\{  \left[
\begin{array}
[c]{c}%
\lambda_{1}\\
-2s\lambda_{1}\\
2\lambda_{1}s^{2}+\lambda_{2}%
\end{array}
\right]  :\left(  \lambda_{1},\lambda_{1},s\right)  \in\left(  -\frac{1}%
{10},\frac{1}{10}\right)  ^{2}\times\left[  0,1\right)  \right\}  \\
&  \subset\left(  -\frac{1}{2},\frac{1}{2}\right)  ^{3}%
\end{align*}
is contained in a fundamental domain of $\mathbb{Z}^{3}.$ Thus, there exists a
band-limited vector $\eta\in\mathbf{H}_{\left(  -\frac{1}{10},\frac{1}%
{10}\right)  }$ such that $V_{\eta}^{L}\left(  \mathbf{H}_{\left(  -\frac
{1}{10},\frac{1}{10}\right)  }\right)  $ is a sampling space with respect to
\[
\Gamma=\exp\left(  \mathbb{Z}Z_{1}+\mathbb{Z}Z_{2}+\mathbb{Z}Z_{3}\right)
\exp\left(  \mathbb{Z}A_{1}\right)  .
\]

\subsection{Proof of Example \ref{dimfour} Part $2$ \label{OneParameter}}

Let $N$ be a simply connected, connected nilpotent Lie group with Lie algebra
spanned by $Z_{1},Z_{2},\cdots,Z_{p},A_{1}$ such that $\left.  \left[
adA_{1}\right]  \right\vert _{\mathfrak{p}}=A$ is a nonzero rational upper
triangular nilpotent matrix of order $p$ such that $e^{A}\mathbb{Z}%
^{p}\subseteq\mathbb{Z}^{p}$ and the algebra generated by $Z_{1},Z_{2}%
,\cdots,Z_{p}$ is commutative. Then $N$ is isomorphic to a semi-direct product
group $\mathbb{R}^{p}\rtimes\mathbb{R}$ with multiplication law given by
\[
\left(  x,t\right)  \left(  x^{\prime},t^{\prime}\right)  =\left(
x+e^{tA}x^{\prime},t+t^{\prime}\right)  .
\]
Clearly since $A$ is not the zero matrix then%
\[
\max\left\{  \mathrm{rank}\left(  \mathbf{M}\left(  \lambda\right)  \right)
:\lambda\in\mathfrak{n}^{\ast}\right\}  =2
\]
and the unitary dual of $N$ is parametrized by a Zariski open subset of
$\mathbb{R}^{p-1}.$ Finally, let
\[
\delta=\left(  2\times\sup\left\{  \left\Vert \sum_{k=0}^{m-1}\frac{\left(
-sA^{T}\right)  ^{k}}{k!}\right\Vert _{\infty}:s\in\left[  0,1\right)
\right\}  \right)  ^{-1}>0.
\]
For $\varepsilon\in\left(  0,\delta\right]  $ there exists a band-limited
vector $\eta=\eta^{\varepsilon}\in\mathbf{H}_{\left(  -\varepsilon
,\varepsilon\right)  ^{p-1}}$ such that the Hilbert space $V_{\eta}^{L}\left(
\mathbf{H}_{\left(  -\varepsilon,\varepsilon\right)  ^{p-1}}\right)  $ is a
left-invariant subspace of $L^{2}\left(  N\right)  $ which is a sampling space
with respect to $\Gamma.$

\subsection{Proof of Example \ref{dimfour} Part $3$ \label{steps}}

Let $N$ be a simply connected, connected nilpotent Lie group with Lie algebra
spanned by $Z_{1},Z_{2},\cdots,Z_{p},A_{1},\cdots,A_{m}$ where $p=m+1$ and the
matrix representation of $ad\left(  \sum_{k=1}^{m}t_{k}A_{k}\right)  $
restricted to $\mathfrak{p}$ is given by the following matrix of order $m+1$
\[
A\left(  t\right)  =\left.  \left[  ad\sum_{k=1}^{m}t_{k}A_{k}\right]
\right\vert \mathfrak{p}=m!\left[
\begin{array}
[c]{cccccc}%
0 & t_{1} & t_{2} & \cdots & t_{m-1} & t_{m}\\
& 0 & t_{1} & t_{2} & \ddots & t_{m-1}\\
&  & 0 & t_{1} & \ddots & \vdots\\
&  &  & 0 & \ddots & t_{2}\\
&  &  &  & \ddots & t_{1}\\
&  &  &  &  & 0
\end{array}
\right]  .
\]
We observe that
\[
\exp A\left(  t\right)  =\sum_{k=0}^{m+1}\frac{A\left(  t\right)  ^{k}}{k!}.
\]
Therefore, $N$ is a nilpotent Lie group of step $p=m+1$. Moreover, the unitary
dual of $N$ is parametrized by the manifold:
\[%
\begin{array}
[c]{c}%
\Sigma=\left\{  \lambda\in\mathfrak{n}^{\ast}:\lambda\left(  Z_{1}\right)
\neq0\right.  \\
\text{ }\left.  \text{and }\lambda\left(  Z_{k+1}\right)  =\lambda\left(
A_{k}\right)  =0\text{ for }1\leq k\leq m\right\}  \simeq\mathbb{R}^{\ast}%
\end{array}
\]
and the Plancherel measure is up to multiplication by a constant given by
$\left\vert \lambda\right\vert ^{m}d\lambda.$ Let
\[
r\left(  t\right)  =2\left\Vert \sum_{k=0}^{m+1}\frac{\left(  -A\left(
t\right)  ^{T}\right)  ^{k}}{k!}\right\Vert _{\infty}%
\]
be a function defined on $\mathbb{R}^{m}.$ The positive number $\delta$
described in Corollary \ref{Main1} is equal to
\[
\delta=\left(  \sup\left\{  r\left(  t\right)  :t\in\left[  0,1\right)
^{m}\right\}  \right)  ^{-1}.
\]
Thus, for $\varepsilon\in\left(  0,\delta\right]  $ there exists a
band-limited vector $\eta=\eta^{\varepsilon}\in\mathbf{H}_{\left(
-\varepsilon,\varepsilon\right)  }$ such that the Hilbert space $V_{\eta}%
^{L}\left(  \mathbf{H}_{\left(  -\varepsilon,\varepsilon\right)  }\right)  $
is a left-invariant subspace of $L^{2}\left(  N\right)  $ which is a sampling
space with respect to $\Gamma.$

\section{Construction of Other Sampling Sets}

\label{Othersamplingset}

In this section, we shall describe how to construct other sampling sets for
band-limited multiplicity-free spaces from a fixed given sampling set
$\Gamma.$ Assume that $\alpha$ is an automorphism of the Lie group $N$. Then
$\alpha$ induces the following unitary map $D:L^{2}\left(  N\right)
\rightarrow L^{2}\left(  N\right)  $ which is defined as follows: $\left[
Dh\right]  \left(  n\right)  =\Lambda\left(  \alpha\right)  ^{-1/2}h\left(
\alpha^{-1}\left(  n\right)  \right)  $ where
\[
\Lambda\left(  \alpha\right)  =\frac{d\left(  \alpha\left(  n\right)  \right)
}{dn}.
\]

\begin{lemma}
For any $x\in N,$ $DL\left(  x\right)  D^{-1}=L\left(  \alpha\left(  x\right)
\right)  .$
\end{lemma}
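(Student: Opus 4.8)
The plan is to verify the intertwining identity by a direct computation, relying on exactly two facts: that the scalar $\Lambda(\alpha)$ is a constant, and that $\alpha$ is a group homomorphism.

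First I would record that, since $N$ is nilpotent it is unimodular, and an automorphism of a unimodular Lie group scales a fixed Haar measure by the constant factor $\Lambda(\alpha)=\left\vert \det(d\alpha_e)\right\vert>0$; in particular $\Lambda(\alpha)$ does not depend on $n$, so that $D$ is a well-defined unitary operator and I may abbreviate $c=\Lambda(\alpha)^{1/2}$ for a fixed positive scalar. Writing out $[Dh](n)=c^{-1}h(\alpha^{-1}(n))$ and inverting the relation (substitute $n=\alpha(m)$), I obtain the explicit formula $[D^{-1}g](n)=c\,g(\alpha(n))$. Having $D^{-1}$ in closed form is what makes the remaining step a one-line verification.

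Next I would apply the three operators $D$, $L(x)$, $D^{-1}$ successively to an arbitrary $h\in L^{2}(N)$ and evaluate at a point $n$. Setting $g_{1}=D^{-1}h$ gives $g_{1}(n)=c\,h(\alpha(n))$; applying $L(x)$ gives $g_{2}(n)=g_{1}(x^{-1}n)=c\,h(\alpha(x^{-1}n))$, and here the homomorphism property $\alpha(x^{-1}n)=\alpha(x)^{-1}\alpha(n)$ is the decisive step. Finally, applying $D$ contributes a factor $c^{-1}$ and replaces $n$ by $\alpha^{-1}(n)$, so that $[Dg_{2}](n)=c^{-1}g_{2}(\alpha^{-1}(n))=h(\alpha(x)^{-1}n)$, the two scalar factors $c$ and $c^{-1}$ cancelling. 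Since $[L(\alpha(x))h](n)=h(\alpha(x)^{-1}n)$ by the definition of the left regular representation, this establishes $DL(x)D^{-1}=L(\alpha(x))$ as operators on $L^{2}(N)$.

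There is no genuine obstacle here: the computation is routine once the two ingredients above are in place. The only points requiring a moment's care are the correct form of $D^{-1}$ and the bookkeeping of the constants $c$ and $c^{-1}$, which cancel precisely because the $\tfrac12$-power is applied symmetrically in $D$ and $D^{-1}$; the substance of the identity is carried entirely by the fact that $\alpha$ respects products and inversion.
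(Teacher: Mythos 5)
Your proof is correct and follows essentially the same route as the paper's: both compute $[DL(x)D^{-1}h](n)$ directly by composing the three operators, invoke the homomorphism property $\alpha(x^{-1}n)=\alpha(x)^{-1}\alpha(n)$, and observe that the factors $\Lambda(\alpha)^{\pm 1/2}$ cancel. Your added remark that unimodularity of $N$ makes $\Lambda(\alpha)$ a genuine constant is a small but welcome clarification that the paper leaves implicit.
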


\begin{proof}
Let $h\in L^{2}\left(  N\right)  .$ Then $\left[  DL\left(  x\right)
D^{-1}h\right]  \left(  n\right)  =\Lambda\left(  \alpha\right)
^{-1/2}\left[  D^{-1}h\right]  \left(  x^{-1}\alpha^{-1}\left(  n\right)
\right)  $ and
\begin{align*}
\left[  DL\left(  x\right)  D^{-1}h\right]  \left(  n\right)   &  =h\left(
\alpha\left(  x^{-1}\right)  \alpha\left(  \alpha^{-1}\left(  n\right)
\right)  \right) \\
&  =h\left(  \alpha\left(  x^{-1}\right)  n\right)  =\left[  L\left(
\alpha\left(  x\right)  \right)  h\right]  \left(  n\right)  .
\end{align*}

\end{proof}

\begin{lemma}
Let $\mathbf{H}_{\mathbf{A}}$ be as defined in (\ref{HA}). The image of the
Hilbert space $\mathbf{H}_{\mathbf{A}}$ under the unitary map $D$ is
band-limited and is multiplicity-free.
\end{lemma}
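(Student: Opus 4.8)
The plan is to move the band-limited and multiplicity-free properties of $\mathbf{H}_{\mathbf{A}}$ across the unitary $D$ by tracking how the dual of $\alpha$ relabels the coadjoint orbit space. Write $\Phi=d\alpha$ for the Lie algebra automorphism of $\mathfrak{n}$ induced by $\alpha$ and $\Phi^{\ast}$ for its transpose on $\mathfrak{n}^{\ast}$. First I would record the covariance of the coadjoint action (\ref{coadjoint}): from $\Phi\circ\mathrm{ad}(m)$-compatibility, i.e.\ $\Phi\circ\mathrm{Ad}(m)=\mathrm{Ad}(\alpha(m))\circ\Phi$, one computes for $X\in\mathfrak{n}$ and $\ell\in\mathfrak{n}^{\ast}$
\[
\Phi^{\ast}\bigl(\exp X\cdot\ell\bigr)=\alpha^{-1}(\exp X)\cdot(\Phi^{\ast}\ell),
\]
so that $\Phi^{\ast}$ carries the coadjoint orbit of $\ell$ bijectively onto the orbit of $\Phi^{\ast}\ell$. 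Being a linear isomorphism it preserves orbit dimension, hence maps the maximal stratum $\Omega$ (the locus of $2m$-dimensional orbits, cut out by $\mathbf{e}(\lambda)=\mathbf{e}$) onto itself.

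The multiplicity-free assertion is then essentially formal. By the preceding lemma $DL(x)D^{-1}=L(\alpha(x))$, so on $\mathbf{H}_{\mathbf{A}}$ one has $L(x)D=D\,L(\alpha^{-1}(x))$; since $\mathbf{H}_{\mathbf{A}}$ is left-invariant, $D$ is a unitary equivalence of $(L,D\mathbf{H}_{\mathbf{A}})$ with the representation $x\mapsto L(\alpha^{-1}(x))$ on $\mathbf{H}_{\mathbf{A}}$. Because $\alpha^{-1}$ is a bijection of $N$, the operator families $\{L(\alpha^{-1}(x)):x\in N\}$ and $\{L(y):y\in N\}$ coincide, so precomposition with $\alpha^{-1}$ leaves the lattice of invariant subspaces and the commutant of $(L,\mathbf{H}_{\mathbf{A}})$ unchanged; as $\mathbf{H}_{\mathbf{A}}$ is multiplicity-free, so is $L\circ\alpha^{-1}$, and transporting by $D$ shows $(L,D\mathbf{H}_{\mathbf{A}})$ is multiplicity-free.

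For band-limitation I would use the behaviour of induced representations under automorphisms. Since $\chi_{\lambda}\circ\alpha=\chi_{\Phi^{\ast}\lambda}$ on $\exp(\mathfrak{p})$ and $\mathrm{Ind}_{H}^{N}(\varphi)\circ\alpha\cong\mathrm{Ind}_{\alpha^{-1}(H)}^{N}(\varphi\circ\alpha)$, the irreducible (\ref{irreducible}) satisfies $\sigma_{\lambda}\circ\alpha^{-1}\cong\sigma_{(\Phi^{\ast})^{-1}\lambda}$, the representation attached to the orbit through $(\Phi^{\ast})^{-1}\lambda$ (here $\alpha(\exp\mathfrak{p})=\exp(\Phi\mathfrak{p})$ is again a polarization, for $(\Phi^{\ast})^{-1}\lambda$). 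Decomposing $L\circ\alpha^{-1}$ on $\mathbf{H}_{\mathbf{A}}$ fiberwise, the group Fourier transform of every vector in $D\mathbf{H}_{\mathbf{A}}$ is supported on the orbits $(\Phi^{\ast})^{-1}(\mathbf{A})$; re-expressing these orbits through their unique representatives in $\Sigma$ via the retraction $r:\Omega\to\Sigma$ (see (\ref{Sigma}) and \cite{Corwin}), that support is the set $\mathbf{A}'=r\bigl((\Phi^{\ast})^{-1}(\mathbf{A})\bigr)\subseteq\Sigma$. By the definition of band-limitation it then suffices to check that $\mathbf{A}'$ is bounded and of positive Plancherel measure.

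The main obstacle is precisely the boundedness of $\mathbf{A}'$, because the passage from an orbit to its cross-section representative is a priori not bounded. This is resolved by the two structural facts established above: $(\Phi^{\ast})^{-1}$ is linear, so $(\Phi^{\ast})^{-1}(\mathbf{A})$ is a bounded subset of $\mathfrak{n}^{\ast}$, and it lies in $\Omega$ since $\Phi^{\ast}$ preserves $\Omega$. By the Corwin--Greenleaf description \cite{Corwin} the retraction $r$ is given on $\Omega$ by polynomial orbit invariants, and polynomials carry bounded subsets of $\Omega$ to bounded subsets of $\Sigma$; hence $\mathbf{A}'$ is bounded. Finally, $(\Phi^{\ast})^{-1}$ and $r$ restrict to diffeomorphisms with nonvanishing Jacobian on the relevant sets, so they preserve $d\mu$-null sets and $\mu(\mathbf{A})>0$ forces $\mu(\mathbf{A}')>0$. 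Thus $D\mathbf{H}_{\mathbf{A}}$ is a band-limited and multiplicity-free subspace of $L^{2}(N)$ of the form (\ref{HA}), which completes the proof.
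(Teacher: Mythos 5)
Your overall strategy is the same as the paper's: conjugate the Plancherel transform through $D$ and use the behaviour of induced representations under automorphisms to see that $\mathcal{P}(Dh)(\lambda)$ is, up to a unitary, $\mathcal{P}h$ evaluated at a transported parameter. The paper does this by quoting Grelaud's intertwining $\mathrm{Ind}_{P}^{N}(\chi_{\lambda})\circ\alpha(n)=C\circ[\mathrm{Ind}_{\alpha^{-1}(P)}^{N}(\chi_{\lambda}\circ\alpha)(n)]\circ C^{\ast}$ and computing directly that $\mathcal{P}(Dh)(\lambda)=\Lambda(\alpha)^{1/2}\,C\circ(\mathcal{P}h)(\alpha^{-1}\star\lambda)\circ C^{\ast}$; since conjugating a rank-one field $\mathbf{v}\otimes\mathbf{u}$ by $C$ yields the rank-one field $C\mathbf{v}\otimes C\mathbf{u}$ with a common second factor, the image is literally of the form (\ref{HA}). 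Your commutant argument for multiplicity-freeness is a legitimate, more abstract substitute for that explicit computation (a representation is multiplicity-free iff its commutant is abelian, and precomposing with the bijection $\alpha^{-1}$ does not change the commutant), and your identification of the new spectral support as $\mathbf{A}'=r\bigl((\Phi^{\ast})^{-1}(\mathbf{A})\bigr)$ agrees with the paper's $\alpha^{-1}\star\lambda$ bookkeeping.

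The gap is in your boundedness argument. The retraction $r:\Omega\to\Sigma$ is not polynomial: in the Corwin--Greenleaf construction it is a rational map whose denominators vanish on $\mathfrak{n}^{\ast}\setminus\Omega$. The paper's own Example \ref{Ex} exhibits this: there the cross-section representative of the orbit through $(f_{1},f_{2},f_{3})$ is $(f_{1},0,f_{3}-f_{2}^{2}/(2f_{1}))$. Consequently, knowing that $(\Phi^{\ast})^{-1}(\mathbf{A})$ is a bounded subset of $\Omega$ does not by itself bound $r\bigl((\Phi^{\ast})^{-1}(\mathbf{A})\bigr)$: a bounded set inside $\Omega$ may accumulate on the zero locus of the denominators, where $r$ blows up. So the sentence ``polynomials carry bounded subsets of $\Omega$ to bounded subsets of $\Sigma$'' does not establish what you need; you would have to argue either that $\mathbf{A}$, hence $(\Phi^{\ast})^{-1}(\mathbf{A})$, stays away from the singular set, or that the particular birational self-map $r\circ(\Phi^{\ast})^{-1}|_{\Sigma}$ of the cross-section is bounded on bounded sets for the automorphisms in question. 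In fairness, the paper's proof is silent on this point as well --- it asserts band-limitedness immediately after the displayed identity --- so your instinct to address it is right, but the justification as written is not correct.
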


\begin{proof}
It is well-known that (see Proposition $1.2,$ \cite{Grelaud}) for $n\in N$,
\[
\mathrm{Ind}_{P}^{N}\left(  \chi_{\lambda}\right)  \circ\alpha\left(
n\right)  =C\circ\left[  \mathrm{Ind}_{\alpha^{-1}\left(  P\right)  }%
^{N}\left(  \chi_{\lambda}\circ\alpha\right)  \left(  n\right)  \right]  \circ
C^{\ast}%
\]
for some unitary operator $C$ acting $L^{2}\left(  \mathbb{R}^{m}\right)  $
which is unique up to multiplication by a complex number of magnitude one
(according to Schur's lemma). Now, let $H$ be the group generated by the
automorphism $\alpha.$ Let $\left[  \mathrm{Ind}_{P}^{N}\left(  \chi_{\lambda
}\right)  \circ\alpha^{-1}\right]  $ be the class of irreducible
representations of $N$ which are equivalent to%
\[
\mathrm{Ind}_{P}^{N}\left(  \chi_{\lambda}\right)  \circ\alpha^{-1}.
\]
Then $H$ acts on the unitary dual of $N$ as follows
\[
\alpha\star\lambda=\left[  \mathrm{Ind}_{P}^{N}\left(  \chi_{\lambda}\right)
\circ\alpha^{-1}\right]  .
\]
Next, let $h\in\mathbf{H}_{\mathbf{A}}$ and let $\mathbf{w},\mathbf{v}\in
L^{2}\left(  \mathbb{R}^{m}\right)  .$ Then
\begin{align*}
\left\langle \left[  \mathcal{P}Dh\right]  \left(  \lambda\right)
\mathbf{w},\mathbf{v}\right\rangle  &  :=\left\langle \left[  \mathcal{P}%
Dh\right]  \left(  \sigma_{\lambda}\right)  \mathbf{w},\mathbf{v}\right\rangle
\\
&  =\Lambda\left(  \alpha\right)  ^{1/2}\int_{N}h\left(  n\right)
\left\langle \sigma_{\lambda}\left(  \alpha\left(  n\right)  \right)
\mathbf{w},\mathbf{v}\right\rangle dn\\
&  =\Lambda\left(  \alpha\right)  ^{1/2}\left\langle \left[  C\circ
\mathcal{P}h\left(  \alpha^{-1}\star\lambda\right)  \circ C^{\ast}\right]
\mathbf{w},\mathbf{v}\right\rangle .
\end{align*}
Since
\[
\left\langle \left[  \mathcal{P}Dh\right]  \left(  \lambda\right)
\mathbf{w},\mathbf{v}\right\rangle =\left\langle \Lambda\left(  \alpha\right)
^{1/2}\left[  C\circ\mathcal{P}h\left(  \alpha^{-1}\star\lambda\right)  \circ
C^{\ast}\right]  \mathbf{w},\mathbf{v}\right\rangle
\]
for arbitrary vectors $\mathbf{w},\mathbf{v}\in L^{2}\left(  \mathbb{R}%
^{m}\right)  ,$ it follows that
\[
\mathcal{P}\left(  Dh\right)  \left(  \lambda\right)  =\Lambda\left(
\alpha\right)  ^{1/2}C\circ\left(  \mathcal{P}h\right)  \left(  \alpha
^{-1}\star\lambda\right)  \circ C^{\ast}.
\]
Thus, the image of the Hilbert space $\mathbf{H}_{\mathbf{A}}$ under the
unitary map $D$ is band-limited and is multiplicity-free.
\end{proof}

\begin{proposition}
\label{Dilation}If $\beta\left(  \mathbf{A}\times\left[  0,1\right)
^{m}\right)  $ is contained in a fundamental domain of $\mathbb{Z}^{p}$ then
there exists a Parseval frame of the type $\left\{  L\left(  \gamma\right)
z:\gamma\in\alpha\left(  \Gamma\right)  \right\}  $ for $D\mathbf{H}%
_{\mathbf{A}}$ and there exists a vector $\eta\in D\mathbf{H}_{\mathbf{A}}$
such that $V_{\eta}\left(  D\mathbf{H}_{\mathbf{A}}\right)  $ is a sampling
space with respect to $\alpha\left(  \Gamma\right)  .$
\end{proposition}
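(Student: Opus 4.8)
The plan is to transport, via the unitary $D$ and the group isomorphism $\alpha$, the sampling structure already produced for $\Gamma$ over to $\alpha(\Gamma)$. First I would record that $\alpha(\Gamma)$ is again a discrete uniform subgroup of $N$, since an automorphism of $N$ carries a uniform lattice to a uniform lattice; in particular $\alpha(\Gamma)$ has positive co-volume, so Lemma \ref{sampling} will be applicable to it. Since $\mathbf{H}_{\mathbf{A}}$ is defined in (\ref{HA}) from a set $\mathbf{A}$ of positive Plancherel measure and $\beta$ is a diffeomorphism (Lemma \ref{beta 1}), the hypothesis that $\beta(\mathbf{A}\times[0,1)^m)$ lies in a fundamental domain of $\mathbb{Z}^p$ is accompanied by $\left\vert\beta(\mathbf{A}\times[0,1)^m)\right\vert>0$, so Theorem \ref{Main 2} does apply.

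Next, Theorem \ref{Main 2} together with Remark \ref{construction of admissible vectors} furnishes a vector $z_0\in\mathbf{H}_{\mathbf{A}}$ for which $\{L(\gamma)z_0:\gamma\in\Gamma\}$ is a Parseval frame for $\mathbf{H}_{\mathbf{A}}$. Set $z=Dz_0\in D\mathbf{H}_{\mathbf{A}}$. Using the intertwining relation $L(\alpha(x))D=DL(x)$ established above and the unitarity of $D$, for any $f=Dg\in D\mathbf{H}_{\mathbf{A}}$ I would compute
\[
\langle f,L(\alpha(\gamma))z\rangle=\langle Dg,L(\alpha(\gamma))Dz_0\rangle=\langle Dg,DL(\gamma)z_0\rangle=\langle g,L(\gamma)z_0\rangle .
\]
Summing the squared moduli over $\gamma\in\Gamma$ and using that $\{L(\gamma)z_0\}_{\gamma\in\Gamma}$ is a Parseval frame gives $\sum_{\gamma\in\Gamma}\abs{\langle f,L(\alpha(\gamma))z\rangle}^2=\|g\|^2=\|f\|^2$. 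Since $\gamma\mapsto\alpha(\gamma)$ is a bijection of $\Gamma$ onto $\alpha(\Gamma)$, this shows that $\{L(\gamma')z:\gamma'\in\alpha(\Gamma)\}$ is a Parseval frame for $D\mathbf{H}_{\mathbf{A}}$, which is the first assertion.

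To obtain the sampling space I would observe that the analysis operator $T\colon D\mathbf{H}_{\mathbf{A}}\to l^2(\alpha(\Gamma))$, $Tf=(\langle f,L(\gamma')z\rangle)_{\gamma'\in\alpha(\Gamma)}$, is an isometry (the analysis operator of a Parseval frame is isometric) and intertwines the restriction of $(L,D\mathbf{H}_{\mathbf{A}})$ to $\alpha(\Gamma)$ with the left regular representation of $\alpha(\Gamma)$, because $T(L(\gamma_0)f)(\gamma')=\langle f,L(\gamma_0^{-1}\gamma')z\rangle$ for $\gamma_0\in\alpha(\Gamma)$. Hence this restriction is unitarily equivalent to a subrepresentation of the left regular representation of $\alpha(\Gamma)$, while $D\mathbf{H}_{\mathbf{A}}$ is band-limited and multiplicity-free by the preceding lemma. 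Applying Lemma \ref{sampling} to the discrete uniform subgroup $\alpha(\Gamma)$ then yields a vector $\eta\in D\mathbf{H}_{\mathbf{A}}$ such that $V_{\eta}(D\mathbf{H}_{\mathbf{A}})$ is a sampling space with respect to $\alpha(\Gamma)$, completing the argument.

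The computations are routine once this dictionary is in place; the only point demanding care is the transport of the representation-theoretic equivalence across the isomorphism $\alpha$, namely that a subrepresentation of the left regular representation of $\Gamma$ corresponds, under $\alpha$, to a subrepresentation of the left regular representation of $\alpha(\Gamma)$. This is handled cleanly by the intertwining operator $T$ constructed from the transported Parseval frame, so no genuine obstacle arises beyond the bookkeeping of the intertwiner $L(\alpha(x))D=DL(x)$.
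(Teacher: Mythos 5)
Your proposal is correct and follows essentially the same route as the paper: transport the Parseval frame $\{L(\gamma)z_{0}\}_{\gamma\in\Gamma}$ for $\mathbf{H}_{\mathbf{A}}$ through the unitary $D$ using $DL(\gamma)D^{-1}=L(\alpha(\gamma))$ to obtain the Parseval frame indexed by $\alpha(\Gamma)$, and then invoke the admissibility-to-sampling machinery. The only cosmetic difference is that you channel the last step through Lemma \ref{sampling} via the analysis operator, whereas the paper cites Propositions $2.54$ and $2.60$ of \cite{Fuhr cont} directly; these are equivalent packagings of the same argument.
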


\begin{proof}
Let us suppose that $\beta\left(  \mathbf{A}\times\left[  0,1\right)
^{m}\right)  $ is contained in a fundamental domain of $\mathbb{Z}^{p}.$ Let
$\left\{  L\left(  \gamma\right)  h:\gamma\in\Gamma\right\}  $ be a Parseval
frame for $\mathbf{H}_{\mathbf{A}}.$ Then $\left\{  D\left(  L\left(
\gamma\right)  h\right)  :\gamma\in\Gamma\right\}  $ is a Parseval frame for
the Hilbert space $D\mathbf{H}_{\mathbf{A}}$ and since
\[
DL\left(  \gamma\right)  D^{-1}=L\left(  \alpha\left(  \gamma\right)  \right)
\]
it follows that
\[
\left\{  L\left(  \alpha\left(  \gamma\right)  \right)  Dh:\gamma\in
\Gamma\right\}  =\left\{  L\left(  \gamma\right)  Dh:\gamma\in\alpha\left(
\Gamma\right)  \right\}
\]
is a Parseval frame for $D\mathbf{H}_{\mathbf{A}}.$ The fact that there exists
a vector $\eta\in D\mathbf{H}_{\mathbf{A}}$ such that $V_{\eta}\left(
D\mathbf{H}_{\mathbf{A}}\right)  $ is a sampling space with respect to
$\alpha\left(  \Gamma\right)  $ is due to Proposition $2.60$ and Proposition
$2.54$ \cite{Fuhr cont}.
\end{proof}

\section{Concluding Observations}

Let us conclude this work by exhibiting an example which does not belong to
the class of groups presented described in Condition \ref{cond}. Let
$\mathfrak{n}$ be a five-dimensional nilpotent Lie algebra with basis spanned
by $Z_{1},Z_{2},Z_{3},A_{1},A_{2}$ such that $\left[  A_{1},Z_{3}\right]
=Z_{2},\left[  A_{2},Z_{3}\right]  =Z_{1}.$ Next, let $\mathfrak{p}$ be the
ideal spanned by $Z_{1},Z_{2},Z_{3}$ and let $\mathfrak{m}$ be the ideal
spanned by $A_{1},A_{2}.$ Let $N$ be a simply connected, connected nilpotent
Lie group with Lie algebra $\mathfrak{n.}$ Then $N=P\rtimes M$ is a metabelian
nilpotent Lie group, and its dual is parametrized by the set
\[
\Sigma=\left\{  \zeta_{1}Z_{1}^{\ast}+\zeta_{2}Z_{2}^{\ast}+\alpha_{1}%
A_{1}^{\ast}:\zeta_{1}\neq0\right\}
\]
which is a cross-section for all coadjoint orbits in the Zariski open set
\[
\Omega=\left\{  \zeta_{1}Z_{1}^{\ast}+\zeta_{2}Z_{2}^{\ast}+\zeta_{3}%
Z_{3}^{\ast}+\alpha_{1}A_{1}^{\ast}+\alpha_{2}A_{2}^{\ast}:\zeta_{1}%
\neq0\right\}  .
\]
The coadjoint orbits in $\Omega$ are two-dimensional manifolds, the ideal
$\mathfrak{p}$ is not a polarization for any linear functional $\lambda$ in
$\Omega.$ In fact it is properly contained in one. Indeed for any linear
functional in the cross-section $\Sigma,$ a polarization algebra subordinated
to $\lambda$ must be a four-dimensional algebra. For example the set
\[
\left\{  Z_{1},Z_{2},Z_{3},A_{1}-\frac{\zeta_{2}}{\zeta_{1}}A_{2}\right\}
\]
spans a polarization subordinated to $\zeta\in\Omega.$ If there exists a
subalgebra of $\mathfrak{n}$ which is a constant polarization, then such an
algebra must be four-dimensional. However, there is no four-dimensional
subalgebra of $\mathfrak{n}$ which is a maximal commutative ideal. Thus, the
results proved in this work do not apply to this group. To the best of our
knowledge, it is an open question if the results of this paper extend to
nilpotent Lie groups which do not belong to the class of groups considered
here. This problem will be the focus of a future investigation.

\end{document}